\documentclass[11pt,reqno]{article}
\usepackage[final]{showkeys}
\usepackage[obeyspaces,hyphens,spaces]{url}

\renewcommand*\showkeyslabelformat[1]{%
\fbox{\parbox[t]{1.4 cm}{\raggedright\normalfont\small\url{#1}}}}

% todo GENERAL SETTINGS---------------------------------------------------------------------------

\RequirePackage{etex}
\usepackage{amsthm, amsmath, amsfonts, amssymb, mathtools, wasysym,  mathrsfs, empheq, etoolbox} 
\usepackage{mathpazo}

\usepackage[top= 2 cm, bottom = 2 cm, left = 2.2 cm, right= 2.2 cm]{geometry}
\usepackage[titletoc]{appendix}

\usepackage[table, dvipsnames]{xcolor} %

\usepackage[labelsep=space]{caption}  % or period
\usepackage{subcaption}

\usepackage[doc]{optional}
\usepackage{graphicx, soul}

\usepackage{colortbl, float, booktabs, sectsty, multirow}
\restylefloat{table*}

\usepackage{pgf}
\usepackage{tikz,tikz-3dplot}
\usepackage{tkz-euclide}

\usetikzlibrary{arrows,quotes,angles,positioning,3d, intersections, calc,backgrounds}
\usetkzobj{all}

\renewcommand{\implies}{\Rightarrow}
\renewcommand{\iff}{\Leftrightarrow}

%------------------------------------------------

% todo COLOR SECTION																				

%\definecolor{labelkey}{rgb}{.1,.1,.8}
%\definecolor{refkey}{rgb}{0,0.6,0.0}

% todo space and indent 

%\usepackage[shortlabels]{enumitem}
%\setlist{noitemsep}

\parindent  4mm
%\parskip    2pt % or 6pt 
%\tolerance  500 % or 3000

\colorlet{myblue}{blue}
\colorlet{mygreen}{Green}

\definecolor{myfirstblue}{rgb}{.8, .8, 1}

\newcommand*\mybluebox[1]{%
    \colorbox{RoyalBlue!20}{\hspace{1em}#1\hspace{1em}}}

%todo HYPERREF SETUP-----------------------------------------

\usepackage{hyperref}
\hypersetup{ 
	colorlinks= true, 
	urlcolor = Red, 
	citecolor = mygreen,
	linkcolor = myblue, 
	citebordercolor = myblue, 
	linkbordercolor = mygreen,
	urlbordercolor = Red
}

\allowdisplaybreaks

% CREF SETUP---------------------------------------------------------------------------

\numberwithin{equation}{section}

\usepackage[capitalize, nameinlink]{cleveref} % color in name

\crefname{equation}{}{}

\crefname{chapter}{Appendix}{chapters}
\crefname{item}{}{items}
\crefname{figure}{Figure}{Figures}
\crefname{theorem}{\protect\theoremname}{\protect\theoremname}
\crefname{lemma}{\protect\lemmaname}{\protect\lemmaname}
\crefname{proposition}{\protect\propositionname}{\protect\propositionname}
\crefname{corollary}{\protect\corollaryname}{\protect\corollaryname}
\crefname{definition}{\protect\definitionname}{\protect\definitionname}
\crefname{fact}{\protect\factname}{\protect\factname}
\crefname{example}{\protect\examplename}{\protect\examplename}
\crefname{algorithm}{\protect\algorithmname}{\protect\algorithmname}
\crefname{remark}{\protect\remarkname}{\protect\remarkname}
\crefname{note}{\protect\notename}{\protect\notename}
\crefname{case}{\protect\casename}{\protect\casename}
\crefname{exercise}{\protect\exercisename}{\protect\exercisename}
\crefname{question}{\protect\questionname}{\protect\questionname}
\crefname{claim}{\protect\claimname}{\protect\claimname}
\crefname{enumi}{}{}

\usepackage{todonotes}

\makeatletter

\let\orgdescriptionlabel\descriptionlabel
\renewcommand*{\descriptionlabel}[1]{%
	\let\orglabel\label
	\let\label\@gobble
	\phantomsection
	\edef\@currentlabel{#1}%
	\let\label\orglabel
	\orgdescriptionlabel{#1}%
}

%todo CHANGE COMPARE SIGN											

\let\leq\leqslant
\let\geq\geqslant

%todo THM STYLE														

\def\th@plain{%
%	\thm@headfont{\bfseries\color{myblue}}
	\thm@notefont{} % same as heading font
	\itshape % body font
}
\def\th@definition{%
%	\thm@headfont{\bfseries\color{myblue}}
	\thm@notefont{}% same as heading font
	\normalfont % body font
}

\g@addto@macro\th@remark{\thm@headpunct{}}
\g@addto@macro\th@definition{\thm@headpunct{}}
\g@addto@macro\th@plain{\thm@headpunct{}}

\makeatother

%\renewcommand{\@cite}[1]{#1}

% THEOREM Environments (Examples)-----------------------------------------

\theoremstyle{plain}

\newtheorem{theorem}{\protect\theoremname}[section]
\newtheorem{corollary}[theorem]{\protect\corollaryname}
\newtheorem{lemma}[theorem]{\protect\lemmaname}
\newtheorem{proposition}[theorem]{\protect\propositionname}
\newtheorem{fact}[theorem]{\protect\factname}

\theoremstyle{definition}

\newtheorem{remark}[theorem]{\protect\remarkname}

\newtheorem{example}[theorem]{\protect\examplename}

\newtheorem*{notation}{\protect\notationname}

\theoremstyle{remark}

% THEOREM NAMES-----------------------------------------

\providecommand{\theoremname}{Theorem}
\providecommand{\propositionname}{Proposition}
\providecommand{\corollaryname}{Corollary}
\providecommand{\factname}{Fact}
\providecommand{\lemmaname}{Lemma}

\providecommand{\definitionname}{Definition}
\providecommand{\notationname}{Notation}
\providecommand{\remarkname}{Remark}

\providecommand{\examplename}{Example}
\providecommand{\observationname}{Observation}
\providecommand{\claimname}{Claim}

\providecommand{\problemname}{Problem}  
\providecommand{\algorithmname}{Algorithm}
\providecommand{\exercisename}{Exercise}
\providecommand{\casename}{Case}
\providecommand{\questionname}{Question}
\providecommand{\notename}{Note}

% ENUMERATE SECTION												

\renewcommand\theenumi{(\roman{enumi})}

\renewcommand{\labelenumi}{\rm (\roman{enumi})}

  %use for something special

%\addto\captionscanadian{\renewcommand{\figurename}{\bfseries Fig.}}
%\renewcommand\appendixpagename{\color{myblue}Appendix}

% todo SET MATH COMMANDS		--------------------------------------------------------

% transpose of matrix

\newcommand\T{%
	{\mathchoice
		{\raisebox{.5ex}{$\displaystyle{\intercal}$}}
		{\raisebox{.5ex}{$\textstyle{\intercal}$}}
		{\raisebox{.35ex}{$\scriptstyle{\intercal}$}}  
		{\raisebox{.35ex}{$\scriptscriptstyle{\intercal}$}}}
}

\newcommand{\ball}[2]{\ensuremath{  \operatorname{B} \left({#1};{#2}\right) } }
\newcommand{\sphere}[2]{\ensuremath{ \operatorname{S}   \left({#1};{#2}\right)  } }

\newcommand{\pc}[1]{{#1}^{\ominus}}

% sphere and ball in Hilbert direct sum

\newcommand{\dsphere}[2]{\ensuremath{ \boldsymbol{\operatorname{S}}   \left({#1};{#2}\right)  } }

\DeclarePairedDelimiterX\menge[2]{ \{ }{ \} }{ {#1} ~ \delimsize \vert ~ \mathopen{} {#2} }
\DeclarePairedDelimiterX\fa[2]{ ( }{ )_{#2} }{#1}
\DeclarePairedDelimiterX\set[2]{ \{ }{ \}_{#2} }{#1}

\DeclarePairedDelimiterX\rb[1]{ ( }{ ) }{#1}

% norm and scalar product

\DeclarePairedDelimiterX\scal[2]{\langle}{\rangle}%
{ \ifblank{#1#2}{ \, \cdot \, \delimsize \vert \, \mathopen{}\cdot \, } {   {#1} \, \delimsize \vert \, \mathopen{}{#2}  } }

\DeclarePairedDelimiterX\norm[1]{\lVert}{\rVert}%
{ \ifblank{#1}{\, \cdot \,}{#1} }

% frobenius innner product and norm

\DeclarePairedDelimiterXPP\fnorm[1]{}\lVert\rVert{_\ensuremath{\mathsf{F}}}{ \ifblank{#1}{\, \cdot \,}{#1} }

\DeclarePairedDelimiterXPP\fscal[2]{}\langle\rangle{_\ensuremath{\mathsf{F}}}{ \ifblank{#1#2}{ \, \cdot \, \delimsize \vert \,\mathopen{} \cdot \, }{  {#1} \, \delimsize \vert \, \mathopen{} {#2}  } }

% abs values				

\DeclarePairedDelimiterX\abs[1]{\lvert}{\rvert}{\ifblank{#1}{\, \cdot \,}{#1}}

\newcommand{\minimize}[2]{\ensuremath{\underset{\substack{{#1}}}{\mathrm{minimize}}~~{#2} }}

 % Moreau envelope

% MATHCAL COMMAND														

\newcommand{\HH}{\ensuremath{\mathcal H}}

% MATHBB COMMAND											

\newcommand{\RR}{\ensuremath{\mathbb R}}
\newcommand{\NN}{\ensuremath{\mathbb N}}

\newcommand{\bbS}{\ensuremath{\mathbb S}}

\newcommand{\bbU}{\ensuremath{\mathbb U }}

\newcommand{\sC}{\ensuremath{\mathsf C}}
\newcommand{\sD}{\ensuremath{\mathsf D}}
\newcommand{\sE}{\ensuremath{\mathsf E}}

% MATHBF COMMAND											

\newcommand{\bx}{\ensuremath{ \boldsymbol{x} } }
\newcommand{\by}{\ensuremath{\boldsymbol{y} } }

\newcommand{\bC}{\ensuremath{\boldsymbol{C}  }}

\newcommand{\bK}{\ensuremath{{\boldsymbol{K} }}}

% COMMAND FOR MATRICES AND LINEAR ALGEBRA																

\newcommand{\bzero}{\ensuremath{{\boldsymbol{0}}}}

\newcommand{\Diag}{\ensuremath{\operatorname{Diag} }}
\DeclareMathOperator{\tra}{tra}
\DeclareMathOperator{\rank}{rank}

% OPERATOR COMMANDS																			

%\newcommand{\Argmin}{\ensuremath{\operatorname{Argmin}}}

\newcommand{\closu}[1]{\ensuremath{\overline{#1} }}

\newcommand{\conv}{\ensuremath{\operatorname{conv}}}
\newcommand{\cone}{\ensuremath{\operatorname{cone}}}
\newcommand{\ccone}{\ensuremath{\operatorname{\overline{cone}}}}

\newcommand{\Id}{\ensuremath{\operatorname{Id}}}

\newcommand{\rec}{\ensuremath{\operatorname{rec}}}

\newcommand{\pos}{ \ensuremath{\operatorname{pos}} }

% change the ugly spacing												

\let\originalleft\left

\renewcommand{\left}{\mathopen{}\originalleft}

\newcolumntype{M}[1]{>{\centering\arraybackslash$}m{#1}<{$}   }

\let\b\boldsymbol 

 %-------------------------------------------------------------------------
\begin{document}

\title{ \sffamily  Projecting onto the intersection of a cone and
a sphere 
 }

\author{
  	Heinz H.\ Bauschke\thanks{
  		Mathematics, University of British Columbia, Kelowna, B.C.\ V1V~1V7, Canada. 
  		Email: \href{mailto: heinz.bauschke@ubc.ca}{\texttt{heinz.bauschke@ubc.ca}}.},~
  	Minh N. Bui\thanks{Mathematics, University of British Columbia, Kelowna, B.C.\ V1V~1V7, Canada. Email: \href{mailto: nhutminh.bui@alumni.ubc.ca}{\texttt{nhutminh.bui@alumni.ubc.ca}}.},~
  	and Xianfu Wang\thanks{
  	Mathematics, University of British Columbia, Kelowna, B.C.\ V1V~1V7, Canada. 
  	Email: \href{mailto: shawn.wang@ubc.ca}{\texttt{shawn.wang@ubc.ca}}.}
}

\date{April 12, 2018}

\maketitle

\begin{abstract}
\noindent
The projection onto the intersection of sets generally does not
allow for a closed form even when the individual projection
operators
have explicit descriptions. 
In this work, we systematically analyze the projection onto the
intersection of a cone with either a ball or a sphere.
Several cases are provided where the projector is available in
closed form. Various examples based on finitely generated cones,
the Lorentz cone, and the cone of positive semidefinite
matrices are presented. The usefulness of our formulae is 
illustrated by numerical experiments for determining copositivity
of real symmetric matrices. 
\end{abstract}
{\small
\noindent
{\bfseries 2010 Mathematics Subject Classification:}
{Primary 
47H09, % nonexpansive
52A05, % Convex sets 
90C25, 
90C26;
Secondary 
15B48, % cones of matrices
47H04. %set-valued operators
}

\noindent {\bfseries Keywords:}
ball,
convex set,
convex cone,
copositive matrix,
projection,
projector,
sphere.
}

\section{Introduction}

Throughout this paper, we assume that 
	\begin{empheq}[box = \mybluebox]{equation}\label{H}
		 \text{$\HH$ is a real Hilbert space} 
	\end{empheq}
with inner product $\scal{\cdot}{\cdot}$ and induced norm
$\|\cdot\|$. 
Let $K$ and $S$ be subsets of $\HH$, with
associated projection operator (or projectors) 
\begin{equation}
\text{$P_K$ and $P_S$,}
\end{equation}
respectively. Our aim is to derive a formula for the projector of
the intersection 
\begin{equation}
\text{$P_{K\cap S}$.}
\end{equation}
Only in rare cases  is it possible
to obtain a {\textquotedblleft}closed form{\textquotedblright} for $P_{K\cap S}$ in terms of $P_K$
and $P_S$: e.g., when $K$ and $S$ are either both half-spaces
(Haugazeau; see \cite{haugazeau1968inequations} and also \cite[Corollary~29.25]{bauschke2017convex})
or both subspaces (Anderson{\textendash}Duffin; see \cite[Theorem~8]{anderson1969series} and also
\cite[Corollary~25.38]{bauschke2017convex}). 
Inspired by an example in the recent and charming book
\cite{lange2016mm}, \emph{our aim in this paper is to systematically
study the case when $K$ is a closed convex \emph{cone} and $S$ is
either the (convex) unit ball or (nonconvex) unit sphere centered
at the origin.} In \cite[Example~5.5.2]{lange2016mm}, 
Lange used this projector
for an algorithm on determining copositivity of a matrix;
however, this projection has the potential to be useful in
other settings where, say, \emph{a priori} constraints are present
(e.g., positivity and energy). We obtain formulae describing the
full (possibly set-valued) projector and also discuss
nonpolyhedral cones such as the Lorentz cone or the cone of
positive semidefinite matrices. We also revisit Lange's
copositivity example and tackle it with other algorithms that
appear to perform quite well. 

The remainder of the paper is organized as follows.
 \cref{s:aux} contains miscellaneous results for subsequent 
use. In \cref{s:cones}, we provide various results on
cones and conical hulls.
The description of projections involving cones and subsets of
spheres is the topic of \cref{s:proj}. 
In \cref{s:Sn}, we turn to results formulated in 
the Hilbert space of real symmetric matrices. 
Cones that are finitely generated and corresponding projectors
are investigated in \cref{s:fg}. 
Our main results are presented in \cref{s:main1} (cone
intersected with ball) and \cref{s:main2} (cone
intersected with sphere), respectively.
Additional examples are provided in \cref{s:fex}. 
In the final \cref{s:copos}, we put the theory to good use
and offer new algorithmic approaches to determine copositivity. 

We conclude this introductory section with some comments on
notation. 
For a subset $C$ of $\HH$, its  \emph{closure} (with
respect to the norm topology of $\HH$) and \emph{orthogonal
complement}
are denoted by  $\closu{C}$ and $C^{\perp}$, respectively. 
%From this
%point onwards, we shall write {\textquotedblleft}$\alpha \coloneqq
%\beta${\textquotedblright} to  indicate that $\alpha$ and $\beta$
%are equal by definition.   
Next, $\NN \coloneqq \left\{0,1,2,\ldots
\right\}$,  $\RR_{+} \coloneqq \left[ 0, +\infty \right[$, and
$\RR_{++} \coloneqq \left] 0,+\infty\right[$. In turn, the sphere
and the closed ball in $\HH$ with center $x \in \HH$ and radius
$\rho \in \RR_{++}$ are respectively defined as $\sphere{x}{\rho}
\coloneqq \menge{y \in \HH}{\norm{y-x} = \rho}$ and $\ball{x}{\rho}
\coloneqq \menge{y \in \HH}{ \norm{y-x} \leq \rho}$. 
The product space 
	$\b{\HH} \coloneqq \HH\oplus \RR$ is 
equipped with the  scalar product $  \rb{\rb{x,\xi}, \rb{y,\eta}}
\mapsto \scal{x}{y} + \xi\eta$, and we shall use boldface letters
for sets and vectors in $\b{\HH}$. The notation 
mainly follows \cite{bauschke2017convex} or 
will be introduced as needed. 

\section{Auxiliary results}

\label{s:aux}

In this short section, we collect a few results that will  be
useful later. 

\begin{lemma}\label{lem: numb0}
	Let $\set{\alpha_{i}}{i \in I}$ be a finite subset of $\RR$ such that 	
		\begin{equation}\label{eq: num1}
			\left(\forall i \in I\right)\left(\forall j \in I\right) \quad i \neq j \implies \alpha_{i}\alpha_{j}=0
		\end{equation}
	and that 
		\begin{equation}\label{eq: num2}
			\sum_{i \in I}\alpha_{i} = 1.
		\end{equation}
	Then there exists $i \in I$ such that $\alpha_{i} =1$ and $\left(\forall j \in I \smallsetminus \left\{i\right\}\right)~ \alpha_{j}=0$.
\end{lemma}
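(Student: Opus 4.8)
The plan is to argue by a short case analysis on how many of the $\alpha_{i}$ are nonzero, using \eqref{eq: num1} to cap this count at one and \eqref{eq: num2} to force it to be at least one.

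First I would establish that at most one index carries a nonzero value: suppose toward a contradiction that there are two distinct indices $i,j \in I$ with $\alpha_{i}\neq 0$ and $\alpha_{j}\neq 0$. Then $\alpha_{i}\alpha_{j}\neq 0$, which contradicts \eqref{eq: num1}. Hence the set $J \coloneqq \menge{i \in I}{\alpha_{i}\neq 0}$ has at most one element.

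Next I would rule out $J = \varnothing$: if every $\alpha_{i}$ were zero, then $\sum_{i \in I}\alpha_{i} = 0 \neq 1$, contradicting \eqref{eq: num2} (this also implicitly uses that $I \neq \varnothing$, which itself follows from \eqref{eq: num2} since the empty sum is $0$). Therefore $J$ is a singleton, say $J = \{i\}$. Finally, splitting the sum in \eqref{eq: num2} as $\alpha_{i} + \sum_{j \in I \smallsetminus\{i\}}\alpha_{j} = \alpha_{i} + 0 = 1$ yields $\alpha_{i} = 1$, while $\alpha_{j} = 0$ for all $j \in I \smallsetminus\{i\}$ by the definition of $J$, which is exactly the claim.

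There is no real obstacle here — the statement is an elementary consequence of the two hypotheses — so the only thing to be careful about is the degenerate bookkeeping (nonemptiness of $I$ and of $J$), both of which are handled directly by \eqref{eq: num2}.
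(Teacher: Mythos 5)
Your argument is correct and follows essentially the same route as the paper's proof: use \cref{eq: num1} to show at most one $\alpha_{i}$ is nonzero, use \cref{eq: num2} to show at least one is nonzero, and then conclude $\alpha_{i}=1$ from the sum. The extra remark about $I\neq\varnothing$ is a harmless bit of bookkeeping the paper leaves implicit.
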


	\begin{proof}
		Suppose that there exist $i$ and $j$ in $I$ such that $i \neq j$, that $\alpha_{i} \neq 0$, and that $\alpha_{j} \neq 0$. Then $\alpha_{i}\alpha_{j} \neq 0$, which violates \cref{eq: num1}. Hence, $\fa{\alpha_{i}}{i \in I}$ contains at most one nonzero number. On the other hand, by \cref{eq: num2}, $\left(\alpha_{i}\right)_{i \in I}$ must contain at least one nonzero number. Altogether, we conclude that there exists $i \in I$ such that $\alpha_{i} \neq 0$ and $\left(\forall j \in I\smallsetminus\left\{i\right\}\right)~\alpha_{j} =0$. Consequently, it follows from \cref{eq: num2} that  $\alpha_{i} =1$, as claimed.
	\end{proof}

\begin{lemma}\label{lem: numb}
	Let $\left\{x_{i}\right\}_{i \in I}$ be a finite subset
	of $\HH$, and let $\left\{ \alpha_{i} \right\}_{i \in I}$ be a 
	finite subset of $\RR$ such that $\sum_{i \in I}\alpha_{i} =1$. Set $x \coloneqq \sum_{i \in I}\alpha_{i} x_{i}$ and $\beta \coloneqq \norm{x}$. Then the following hold: 
		\begin{enumerate}
			\item\label{it: vt1} $ \beta^{2} + \sum_{\left(i,j\right) \in I \times I } \alpha_{i} \alpha_{j} \norm{x_{i}-x_{j}}^{2}/2 = \sum_{i \in I} \alpha_{i} \norm{x_{i}}^{2}$.
			\item\label{it: vt2} Suppose that 
				\begin{equation}\label{eq: vt-as1}
					\left(\forall i \in I\right) \quad \norm{x_{i}} =\beta,
				\end{equation}
			that 
				\begin{equation}\label{eq: vt-as2}
					\left(\forall i \in I\right) \quad \alpha_{i} \geq 0, 
				\end{equation}
			and that the vectors $\set{x_{i}}{i \in I}$ are pairwise distinct, i.e., 	
				\begin{equation}\label{eq: vt-as3}
					\left(\forall i \in I\right)\rb{\forall j \in I} \quad i \neq j \implies x_{i} \neq x_{j}.
				\end{equation}
			Then $\left(\exists i \in I\right)~ x= x_{i}$.
		\end{enumerate}
\end{lemma}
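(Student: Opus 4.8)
The plan is to establish \cref{it: vt1} by a direct expansion of the inner products and then deduce \cref{it: vt2} from it together with \cref{lem: numb0}.

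For \cref{it: vt1}, I would start from $\beta^{2} = \norm{x}^{2} = \scal{\sum_{i \in I}\alpha_{i}x_{i}}{\sum_{j \in I}\alpha_{j}x_{j}} = \sum_{(i,j) \in I \times I}\alpha_{i}\alpha_{j}\scal{x_{i}}{x_{j}}$, using bilinearity of the inner product. On the other side I would expand $\norm{x_{i}-x_{j}}^{2} = \norm{x_{i}}^{2} - 2\scal{x_{i}}{x_{j}} + \norm{x_{j}}^{2}$ and sum over $(i,j) \in I \times I$. The cross terms reproduce exactly $-2\beta^{2}$, while the symmetry under $(i,j) \leftrightarrow (j,i)$ together with $\sum_{i \in I}\alpha_{i} = 1$ collapses $\sum_{(i,j) \in I \times I}\alpha_{i}\alpha_{j}\bigl(\norm{x_{i}}^{2}+\norm{x_{j}}^{2}\bigr)$ to $2\sum_{i \in I}\alpha_{i}\norm{x_{i}}^{2}$. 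Dividing by $2$ and rearranging yields the claimed identity. The only place requiring care is the bookkeeping of this symmetric double sum, including the diagonal terms $i=j$, which contribute $0$ and so do no harm.

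For \cref{it: vt2}, I would feed \cref{eq: vt-as1} into \cref{it: vt1}: since $\norm{x_{i}} = \beta$ for every $i$, the right-hand side of \cref{it: vt1} equals $\beta^{2}\sum_{i \in I}\alpha_{i} = \beta^{2}$, so \cref{it: vt1} forces $\sum_{(i,j) \in I \times I}\alpha_{i}\alpha_{j}\norm{x_{i}-x_{j}}^{2} = 0$. By \cref{eq: vt-as2}, each summand $\alpha_{i}\alpha_{j}\norm{x_{i}-x_{j}}^{2}$ is nonnegative, hence every one of them vanishes. Fixing $i \neq j$ and invoking \cref{eq: vt-as3} (so that $\norm{x_{i}-x_{j}}^{2} > 0$), I conclude $\alpha_{i}\alpha_{j} = 0$. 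Thus $\set{\alpha_{i}}{i \in I}$ satisfies \cref{eq: num1}, and it satisfies \cref{eq: num2} by hypothesis, so \cref{lem: numb0} supplies an index $i \in I$ with $\alpha_{i} = 1$ and $\alpha_{j} = 0$ for all $j \in I \smallsetminus \{i\}$. Therefore $x = \sum_{j \in I}\alpha_{j}x_{j} = x_{i}$, as desired.

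I do not anticipate a genuine obstacle: \cref{it: vt2} is essentially a quantitative equality-case (strict convexity) statement, and \cref{lem: numb0} has been arranged precisely to close it out. If anything, the delicate point is simply tracking the factor $1/2$ and the diagonal contributions correctly in the expansion for \cref{it: vt1}.
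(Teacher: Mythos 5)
Your argument is correct and follows essentially the same route as the paper: item \cref{it: vt2} is derived exactly as in the text, by substituting \cref{eq: vt-as1} into \cref{it: vt1}, concluding that each $\alpha_{i}\alpha_{j}\norm{x_{i}-x_{j}}^{2}$ vanishes, and then invoking \cref{lem: numb0}. The only difference is that for \cref{it: vt1} the paper simply cites \cite[Lemma 2.14(ii)]{bauschke2017convex}, whereas you carry out the (correct) direct expansion of the double sum yourself.
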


	\begin{proof}
		\cref{it: vt1}:  See, for instance, \cite[Lemma 2.14(ii)]{bauschke2017convex}.
		
		\cref{it: vt2}: Since $\sum_{i \in I} \alpha_{i} =1$, we deduce from \cref{it: vt1} and \cref{eq: vt-as1} that $ \beta^{2} + \sum_{\left(i,j\right) \in I \times I } \alpha_{i} \alpha_{j} \norm{x_{i}-x_{j}}^{2}/2 = \sum_{i \in I} \alpha_{i} \beta^{2} = \beta^{2} $, which yields $\sum_{\left(i,j\right) \in I \times I } \alpha_{i} \alpha_{j} \norm{x_{i}-x_{j}}^{2} =0$ or, equivalently, by \cref{eq: vt-as2}, 
			\begin{equation}\label{eq: vt-1}
				\left(\forall i \in I\right)\left(\forall j \in I\right) \quad \alpha_{i} \alpha_{j} \norm{x_{i} - x_{j}}^{2} = 0. 
			\end{equation}
		Thus,  we get from \cref{eq: vt-as3} and \cref{eq: vt-1} that 
			\begin{math}
				\left(\forall i \in I\right)\left(\forall j \in I\right)~ i \neq j \Rightarrow \norm{x_{i} - x_{j}} \neq 0 \Rightarrow \alpha_{i}\alpha_{j} =0,
			\end{math}
		and because $\sum_{i \in I}\alpha_{i} =1$, \cref{lem: numb0} guarantees the existence of $i \in I$ such that $\alpha_{i} =1$ and $\left(\forall j \in I \smallsetminus \left\{i\right\}\right)~\alpha_{j} =0$. Consequently, it follows from the very definition of $x$ that $x=x_{i}$, as desired.
	\end{proof}

\begin{lemma}\label{lem: maxSph}
	Let $\alpha$  be in $\RR$, let $\beta$ be in $\RR_{++}$,
	and let $\bx = \rb{x,\xi} \in \b{\HH}$.
	Set\footnote{Here and elsewhere,
	``$\times$'' denotes the \emph{Cartesian product} of
	sets.}	
		\begin{equation}
			\b{S}_{\alpha,\beta} \coloneqq \sphere{0}{\beta} \times \left\{ \alpha \right\}.
		\end{equation}
	Then $\max \scal{\bx}{\b{S}_{\alpha,\beta} } = \beta \norm{x} + \xi \alpha$.
\end{lemma}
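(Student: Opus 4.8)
The plan is to compute the supremum of the linear functional $\by \mapsto \scal{\bx}{\by}$ over the set $\b{S}_{\alpha,\beta}$ directly, exploiting the product structure. An element $\by$ of $\b{S}_{\alpha,\beta}$ has the form $\by = (y,\alpha)$ with $\norm{y} = \beta$, so $\scal{\bx}{\by} = \scal{x}{y} + \xi\alpha$; since the second summand $\xi\alpha$ is a constant independent of the choice of $\by$, the problem reduces to maximizing $\scal{x}{y}$ over the sphere $\sphere{0}{\beta}$ in $\HH$.

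First I would bound $\scal{x}{y}$ above: by Cauchy--Schwarz, $\scal{x}{y} \leq \norm{x}\,\norm{y} = \beta\norm{x}$ for every $y \in \sphere{0}{\beta}$, hence $\scal{\bx}{\b{S}_{\alpha,\beta}} \subseteq \left]-\infty, \beta\norm{x} + \xi\alpha\right]$. Next I would show this bound is attained, so that it is in fact the maximum. If $x \neq 0$, take $y \coloneqq \beta x / \norm{x}$; then $\norm{y} = \beta$, so $(y,\alpha) \in \b{S}_{\alpha,\beta}$, and $\scal{x}{y} = \beta\norm{x}$, giving $\scal{\bx}{(y,\alpha)} = \beta\norm{x} + \xi\alpha$. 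If $x = 0$, then $\beta\norm{x} + \xi\alpha = \xi\alpha$, and any $y \in \sphere{0}{\beta}$ (which is nonempty because $\beta \in \RR_{++}$ and $\HH$ is a nontrivial Hilbert space — or trivially if $\HH = \{0\}$ one notes $\sphere{0}{\beta} = \varnothing$, but then the claim is vacuous for the sup; in the relevant setting $\HH \neq \{0\}$) yields $\scal{x}{y} = 0$, so again the value $\xi\alpha$ is attained. Combining the upper bound with attainment gives $\max\scal{\bx}{\b{S}_{\alpha,\beta}} = \beta\norm{x} + \xi\alpha$.

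There is essentially no obstacle here; the only point deserving a word of care is the degenerate case $x = 0$, where one must be sure the sphere $\sphere{0}{\beta}$ is nonempty so that the maximum is genuinely attained rather than merely a supremum — and this is guaranteed by $\beta \in \RR_{++}$ together with the standing assumption that $\HH$ is a (nonzero) real Hilbert space. Everything else is a one-line application of Cauchy--Schwarz and its equality case.
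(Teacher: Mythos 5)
Your proof is correct and follows essentially the same route as the paper's: Cauchy--Schwarz gives the upper bound $\beta\norm{x}+\xi\alpha$, and the point $\rb{\beta x/\norm{x},\alpha}$ attains it when $x\neq 0$, with the case $x=0$ disposed of separately (the paper handles it by observing $\scal{\bx}{\b{S}_{\alpha,\beta}}=\{\xi\alpha\}$ there). Your extra remark about nonemptiness of $\sphere{0}{\beta}$ is a fair point of care but does not change the argument.
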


	\begin{proof}
		We shall assume that $x \neq 0$, since otherwise $\scal{\bx}{\b{S}_{\alpha,\beta} } = \{ \xi \alpha \}$ and the assertion is clear. Now, for every $\b{y}=\rb{y,\alpha} \in \b{S}_{\alpha,\beta}$, since $\norm{y} =\beta$,  the Cauchy{\textendash}Schwarz inequality yields 
			\begin{equation}
				\scal{\bx}{\b{y} }= \scal{x}{y} + \xi\alpha \leq \norm{x}\norm{y} + \xi \alpha = \beta \norm{x} + \xi \alpha.
			\end{equation}
		Hence $\sup \scal{\bx}{\b{S}_{\alpha,\beta} } \leq  \beta \norm{x} + \xi \alpha$. Consequently, because $\rb{\beta x /\norm{x} , \alpha } \in \b{S}_{\alpha,\beta}$ and  
			\begin{equation}
				\scal*{\bx}{\rb*{\frac{\beta x}{\norm{x}} , \alpha } } = \scal*{x}{ \frac{\beta x}{\norm{x}} } + \xi \alpha = \beta\norm{x} + \xi \alpha, 
			\end{equation}
		we obtain the conclusion.
	\end{proof}

\section{Cones and conical hulls}
\label{s:cones}

In this section, we systematically study cones and conical hulls.

Let $C$ be a subset of $\HH$. Recall that the 
\emph{convex hull} of $C$, i.e., the smallest convex subset of
$\HH$ containing $C$, is denoted by $\conv{C}$ and 
(see, e.g., \cite[Proposition 3.4]{bauschke2017convex}), it is characterized by
	\begin{equation}\label{eq: conv}
		\conv{C} = \menge*{ \sum_{i\in I}  \alpha_{i} x_{i} }{ \text{$I$ is finite, $\left\{\alpha_{i} \right\}_{i\in I} \subseteq \left]0,1 \right]$ such that $\sum_{i\in I}\alpha_{i}=1$, and $\left\{x_{i}\right\}_{i\in I} \subseteq C$ }  }.
	\end{equation}
Next, $C$ is a \emph{cone} if $C = \bigcup_{\lambda \in \RR_{++}} \lambda C$. In turn, the \emph{conical hull} of $C$ is the smallest cone in $\HH$ containing $C$ and is denoted by $\cone{C};$ furthermore, the \emph{closed conical hull} of $C$, in symbol, $\ccone{C}$, is the smallest closed cone in $\HH$ containing $C$. Finally, the \emph{polar cone} of $C$ is 
	\begin{equation}
		\pc{C} \coloneqq \menge*{u \in \HH}{\sup \scal{u}{C} \leq 0 },
	\end{equation}
and the \emph{recession cone} of $C$ is 
	\begin{equation}
		\rec{C} \coloneqq \menge*{x \in \HH}{x + C \subseteq C}.
	\end{equation}

\begin{example}\label{eg: co-S}
	Let $\rho \in \RR_{++}$, and set $C \coloneqq \sphere{0}{\rho}$. Then $\conv{C} = \ball{0}{\rho}$.
\end{example}

	\begin{proof}
		Since $\ball{0}{\rho}$ is convex and $C \subseteq
		\ball{0}{\rho}$, we obtain $\conv{C} \subseteq
		\ball{0}{\rho}$. Conversely, take $x \in
		\ball{0}{\rho}$, and we consider the following 
		two conceivable cases: 
			
			\hspace{\parindent}\emph{Case 1:} $x=0$: Fix $y \in C$. Then clearly $-y \in C$ and $x = 0 = \rb{1/2}y + \rb{1/2}\rb{-y} \in \conv{C}$.
			
			\hspace{\parindent}\emph{Case 2:} $x \neq 0$: Set $x_{+}\coloneqq \rb{\rho /\norm{x}}x$, $x_{-}\coloneqq \left(\rho/\norm{x}\right)\rb{-x}$, and $\alpha \coloneqq \rb{1+\norm{x}/\rho} /2 $. Then $\{ x_{+},x_{-} \} \subseteq C$, and because $\norm{x} \leq \rho$, we have $\alpha \in \left]0,1\right]$. Thus, since it is readily verified that $x = \alpha x_{+} + \rb{1-\alpha}x_{-}$, we get $x \in \conv{C}$. 
			
		Hence, $x \in \conv{C}$ in both cases, which completes the proof.
	\end{proof}

For the sake of clarity, let us point out the following.

\begin{remark}\label{rem: clar}
	Let $K$ be a nonempty cone in $\HH$. Then $0 \in \closu{K}$, and if $K \neq \left\{0\right\}$, then $\left(\forall \rho \in \RR_{++}\right)~ K \cap \sphere{0}{\rho} \neq \varnothing$.
\end{remark}

\begin{fact}\label{fact: cone}
	Let $C$ be a subset of $\HH$. Then the following hold: 
	\begin{enumerate}
		\item\label{it: cone1} $\cone{C} = \bigcup_{\lambda \in \RR_{++}} \lambda C$.
		\item\label{it: cone2} $\ccone{C}  = \closu{ \cone{C} }$.
		\item\label{it: cone3} $\cone \left(\conv{C}\right)$ is the smallest convex cone containing $C$.
	\end{enumerate}
\end{fact}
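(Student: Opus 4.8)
The statement is \cref{fact: cone}, which collects three standard descriptions of conical hulls. I would prove the three items in order, since each one feeds into the next. For \cref{it: cone1}, the plan is to show that $U \coloneqq \bigcup_{\lambda \in \RR_{++}} \lambda C$ is the smallest cone containing $C$. First I would check that $U$ is a cone: for $\mu \in \RR_{++}$, $\mu U = \bigcup_{\lambda \in \RR_{++}} (\mu\lambda) C = \bigcup_{\nu \in \RR_{++}} \nu C = U$, so $\bigcup_{\mu \in \RR_{++}} \mu U = U$; and $C = 1\cdot C \subseteq U$. Then, if $D$ is any cone with $C \subseteq D$, then for each $\lambda \in \RR_{++}$ we have $\lambda C \subseteq \lambda D \subseteq \bigcup_{\mu\in\RR_{++}}\mu D = D$, hence $U \subseteq D$. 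Since $\cone C$ is by definition the smallest cone containing $C$, we get $\cone C = U$.

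For \cref{it: cone2}, the plan is to show $\closu{\cone C}$ is the smallest closed cone containing $C$. It obviously contains $C$ (since $\cone C$ does) and is closed; the one point needing care is that the closure of a cone is a cone. For this I would use that scalar multiplication by a fixed $\lambda \in \RR_{++}$ is a homeomorphism of $\HH$, so $\lambda\,\closu{\cone C} = \closu{\lambda\,\cone C} \subseteq \closu{\cone C}$ using $\lambda\,\cone C\subseteq \cone C$; combined with $\closu{\cone C} = 1\cdot\closu{\cone C}\subseteq \bigcup_{\lambda}\lambda\,\closu{\cone C}$, this gives equality, so $\closu{\cone C}$ is a cone. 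Minimality: if $D$ is a closed cone containing $C$, then $\cone C \subseteq D$ by \cref{it: cone1}'s minimality, and taking closures, $\closu{\cone C} \subseteq \closu D = D$. Hence $\ccone C = \closu{\cone C}$.

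For \cref{it: cone3}, I would argue that $\cone(\conv C)$ is convex, is a cone, contains $C$, and is contained in every convex cone containing $C$. It is a cone by \cref{it: cone1} applied to $\conv C$, and it contains $C$ since $C \subseteq \conv C \subseteq \cone(\conv C)$. Convexity is the part requiring a small computation: using \cref{it: cone1}, any two points of $\cone(\conv C)$ can be written as $\lambda_1 y_1$ and $\lambda_2 y_2$ with $\lambda_1,\lambda_2 \in \RR_{++}$ and $y_1, y_2 \in \conv C$; for $t \in \left[0,1\right]$, writing $t\lambda_1 y_1 + (1-t)\lambda_2 y_2 = (t\lambda_1 + (1-t)\lambda_2)\big(\tfrac{t\lambda_1}{t\lambda_1+(1-t)\lambda_2}y_1 + \tfrac{(1-t)\lambda_2}{t\lambda_1+(1-t)\lambda_2}y_2\big)$ exhibits it as a positive scalar times a convex combination of $y_1,y_2 \in \conv C$, hence as an element of $\cone(\conv C)$ — provided $t\lambda_1 + (1-t)\lambda_2 > 0$, which holds since the $\lambda_i$ are strictly positive (and the edge cases where one coefficient vanishes are immediate). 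For minimality, let $D$ be any convex cone containing $C$; then $\conv C \subseteq D$ because $D$ is convex, and then $\cone(\conv C) \subseteq D$ because $D$ is a cone and $\cone(\conv C)$ is the smallest cone containing $\conv C$ by \cref{it: cone1}.

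The routine verifications (that various sets are cones, that closures and convex hulls behave as expected) are all short; the only mild obstacle is keeping the bookkeeping of the "smallest such set" characterizations straight, and handling the degenerate cases $C = \varnothing$ or points equal to $0$ without fuss. Since $\cone\varnothing = \varnothing$ under \cref{it: cone1}'s formula and $\conv\varnothing = \varnothing$, all three statements hold vacuously in that case, so I would dispatch it in one line at the start if needed.
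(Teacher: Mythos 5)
Your proof is correct: each of the three minimality arguments is complete, the verification that $\bigcup_{\lambda\in\RR_{++}}\lambda C$ and $\closu{\cone{C}}$ are cones is sound, and the convexity computation in item (iii) handles the normalization correctly. Note, however, that the paper does not prove this statement at all --- it simply cites \cite[Proposition~6.2(i)--(iii)]{bauschke2017convex} --- so your self-contained argument is the standard textbook proof that the citation stands in for, rather than an alternative to anything in the paper itself.
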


\begin{proof}
	See, e.g., \cite[Proposition 6.2(i){\textendash}(iii)]{bauschke2017convex}.
\end{proof}

%todo TOPOLOGICAL RESULTS																		

In general, for subsets $C$ and $D$ of $\HH$, $\closu{C \cap D} \neq \closu{C} \cap \closu{D}$. 
However, the following result provides an interesting instance
where taking intersections and closures commutes.

\begin{proposition}\label{lem: topo}
	Let $K$ be a nonempty cone in $\HH$, and let $\rho \in \RR_{++}$. Then the following hold: 
	\begin{enumerate}
		\item\label{it: topo1}	$\closu{K} \cap \sphere{0}{\rho} = \closu{ K \cap \sphere{0}{\rho}}$.
		\item\label{it: topo2} $\closu{K} \cap \ball{0}{\rho} = \closu{ K \cap \ball{0}{\rho}}$.
	\end{enumerate}
\end{proposition}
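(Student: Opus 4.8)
The plan is to prove each inclusion separately for both parts, noting that the nontrivial direction is always ``$\subseteq$'', since $K \cap \sphere{0}{\rho} \subseteq \closu{K} \cap \sphere{0}{\rho}$ and the right-hand side is closed (as an intersection of two closed sets), which immediately gives ``$\supseteq$'' in both cases after taking closures. So the real content is showing $\closu{K} \cap \sphere{0}{\rho} \subseteq \closu{K \cap \sphere{0}{\rho}}$ and likewise for the ball.

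For \cref{it: topo1}: let $x \in \closu{K} \cap \sphere{0}{\rho}$, so $\norm{x} = \rho$ and there is a sequence $(k_n)$ in $K$ with $k_n \to x$. For $n$ large, $k_n \neq 0$ (since $\norm{k_n} \to \rho > 0$), so I can define $y_n \coloneqq (\rho/\norm{k_n}) k_n$. Because $K$ is a cone, $\RR_{++} K = K$ by \cref{fact: cone}\cref{it: cone1} (or directly from the definition), hence $y_n \in K$; and $\norm{y_n} = \rho$, so $y_n \in K \cap \sphere{0}{\rho}$. Since $\norm{k_n} \to \rho$, the scalars $\rho/\norm{k_n} \to 1$, so $y_n = (\rho/\norm{k_n}) k_n \to 1 \cdot x = x$. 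Therefore $x \in \closu{K \cap \sphere{0}{\rho}}$, proving \cref{it: topo1}.

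For \cref{it: topo2}: again take $x \in \closu{K} \cap \ball{0}{\rho}$ with $k_n \in K$, $k_n \to x$, and $\norm{x} \leq \rho$. The aim is to produce points in $K \cap \ball{0}{\rho}$ converging to $x$. If $\norm{k_n} \leq \rho$ then $k_n$ itself lies in $K \cap \ball{0}{\rho}$; otherwise rescale by $\rho/\norm{k_n} \in \left]0,1\right[$ to land on the sphere, staying in $K$ (cone property) and in $\ball{0}{\rho}$. Concretely, set $\lambda_n \coloneqq \min\{1, \rho/\norm{k_n}\}$ and $y_n \coloneqq \lambda_n k_n \in K \cap \ball{0}{\rho}$. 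One checks $\lambda_n \to 1$: indeed $\rho/\norm{k_n} \to \rho/\norm{x} \geq 1$ when $x \neq 0$ (so eventually $\lambda_n = 1$ or $\lambda_n \to$ something $\geq 1$ capped at $1$, hence $\to 1$), and the edge case $x = 0$ must be handled, say by noting $\norm{y_n} \leq \norm{k_n} \to 0$ directly, so $y_n \to 0 = x$ regardless. When $x \neq 0$, from $\lambda_n \to 1$ and $k_n \to x$ we get $y_n \to x$, so $x \in \closu{K \cap \ball{0}{\rho}}$.

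The only mild subtlety — the ``main obstacle,'' though it is minor — is the degenerate case $x = 0$ in part \cref{it: topo2}: there the rescaling factor $\rho/\norm{k_n}$ need not converge to $1$ (it may blow up), so one should argue separately that $\min\{1, \rho/\norm{k_n}\}\, k_n \to 0$ simply because its norm is at most $\norm{k_n} \to 0$. (In part \cref{it: topo1} this case cannot arise since $\norm{x} = \rho > 0$.) Everything else is a routine continuity-of-scalar-multiplication argument, and the cone hypothesis is used only through the closure of $K$ under positive scalings. One could alternatively invoke \cref{rem: clar} to guarantee $K \cap \sphere{0}{\rho}$ is nonempty when $K \neq \{0\}$, but this is not actually needed for the inclusions themselves.
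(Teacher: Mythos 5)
Your proof is correct and follows essentially the same route as the paper: in both arguments the easy inclusion is immediate from closedness, and the nontrivial inclusion is obtained by rescaling an approximating sequence from $K$ back onto the sphere (resp.\ into the ball) using the cone property. The only cosmetic difference is in part (ii) at $x=0$, where the paper picks a fixed nonzero $y\in K$ and shrinks it (which is why it assumes $K\neq\{0\}$ there), whereas you shrink the approximating sequence itself via the factor $\min\{1,\rho/\norm{k_{n}}\}$ and bound $\norm{y_{n}}\leq\norm{k_{n}}\to 0$; both are valid.
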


\begin{proof}
	We assume that  
		\begin{equation}\label{eq: topo-zero}
			K \ne \left\{0\right\},
		\end{equation}
	since otherwise the assertions are clear.
	
	\cref{it: topo1}: Since we obviously have $\closu{ K \cap \sphere{0}{\rho}} \subseteq \closu{K} \cap \sphere{0}{\rho}$, it suffices to verify that $\closu{K} \cap \sphere{0}{\rho} \subseteq \closu{ K \cap \sphere{0}{\rho}}$. To do so, take $x \in \closu{K} \cap \sphere{0}{\rho}$. Then, because $x \in \closu{K}$, there exists a sequence $\left(x_{n}\right)_{ n \in \NN}$ in $K$ such that 
		\begin{equation}\label{eq: topo1}
			x_{n} \to x.
		\end{equation} 
	In turn, by the continuity of $\norm{}$ and the fact that $x \in \sphere{0}{\rho}$, 
	\begin{equation}\label{eq: topo2}
	\norm{x_{n}} \to \norm{x} = \rho \in \RR_{++},
	\end{equation}
	and therefore, we can assume without loss of generality that $\left(\forall n \in \NN\right)~ \norm{x_{n}} \neq 0$. Hence, for every $n \in \NN$, since $x_{n} \in K$ and $\norm{ \rho x_{n} / \norm{x_{n}} } = \rho$, the assumption that $K$ is a cone implies that $\rho x_{n} / \norm{x_{n}}$ lies in $ K \cap \sphere{0}{\rho}$. Thus, $\left(\rho x_{n}/\norm{x_{n}}\right)_{n \in \NN}$ is a sequence in $ K \cap \sphere{0}{\rho}$; moreover, \cref{eq: topo1} and \cref{eq: topo2} assert that $\rho x_{n}/ \norm{x_{n}} \to \rho x / \rho =x$. Consequently, $x \in \closu{K \cap \sphere{0}{\rho}}$, as announced.
	
	\cref{it: topo2}: First, it is clear that $ \closu{ K \cap \ball{0}{\rho}} \subseteq \closu{K} \cap \ball{0}{\rho}$. Conversely, fix $x \in \closu{K} \cap \ball{0}{\rho}$, and we shall consider two conceivable cases: 
	
	\hspace{\parindent}(A) $x=0$: By \cref{eq: topo-zero}, there exists  
	\begin{equation}\label{eq: topo3}
	y \in K \smallsetminus \left\{0\right\}. 
	\end{equation}
	In turn, set 
	\begin{equation}\label{eq: topo4}
	\left(\forall n \in \NN\right) \quad y_{n} \coloneqq \frac{\rho}{\left(n+1\right)\norm{y}}y. 
	\end{equation}
	Then, for every $n \in \NN$, since $K$ is a cone, \cref{eq: topo3} and \cref{eq: topo4} assert that $y_{n} \in K$ and thus, since $\norm{y_{n} } = \rho / \left(n+1\right) \leq \rho$ by \cref{eq: topo4}, we deduce that $y_{n} \in K \cap \ball{0}{\rho}$. Hence, because $y_{n} = \rho y / \left[ \left( n+1 \right) \norm{y}\right] \to 0 =x $, we infer that $x \in \closu{K \cap \ball{0}{\rho}}$. 
	
	\hspace{\parindent}(B) $x \neq 0$: Since $x \in \closu{K}$, there is a sequence $\left(x_{n}\right)_{n \in \NN}$ in $K$ such that 
		\begin{equation}\label{eq: topo5}
			x_{n} \to x.
		\end{equation}
	In turn, by the continuity of $\norm{}$, 
		\begin{equation}\label{eq: topo6}
			\norm{x_{n}} \to \norm{x} \in \RR_{++},
		\end{equation}
	and we can therefore assume that $\left(\forall n \in \NN\right)~ \norm{x_{n}} \neq 0$. Now set 
		\begin{equation}\label{eq: topo7}
			\left(\forall n \in \NN\right) \quad y_{n} \coloneqq \frac{\norm{x}}{\norm{x_{n}}}x_{n}.
		\end{equation}
	For every $n \in \NN$, because $x_{n} \in K$ and $\norm{x}  \leq \rho$,    the assumption that $K$ is a cone and \cref{eq: topo7} yield $y_{n} \in K \cap \ball{0}{\rho}$. Consequently, since  $y_{n} \to \norm{x} x / \norm{x} = x$ due to \cref{eq: topo5} and \cref{eq: topo6}, we obtain $x \in \closu{K \cap \ball{0}{\rho}}$.
	
	To sum up, in both cases, we have $x \in \closu{K \cap \ball{0}{\rho}}$, and the conclusion follows.
\end{proof}

% TODO CONES IDENTITIES															

We shall require the following notation.

\begin{notation}
	Let $C$ be a nonempty subset of $\HH$. Define 
	its \emph{positive
	span}\footnote{Technically, this should
	be called 
	the ``nonnegative span'' but we follow the more common
	usage.} by
		\begin{empheq}[box = \mybluebox]{equation}\label{eq: Rc}
		 	\pos{C} \coloneqq \menge*{ \sum_{i \in I} \alpha_{i} x_{i} }{\text{$I$ is finite, $\left\{\alpha_{i}\right\}_{i \in I} \subseteq \RR_{+}$, and $\left\{x_{i}\right\}_{i \in I} \subseteq C$}}.
		\end{empheq}
	We observe that if $C$ is finite, then $\pos{C}$ coincides\footnote{This readily follows from \cref{eq: Rc}.} with the Minkowski sum of the sets $\left(\RR_{+}c\right)_{c \in C}$, i.e., 
		\begin{equation}\label{eq: Minsum}
			\pos{C} = \sum_{c \in C} \RR_{+}c.
		\end{equation}
\end{notation}

\begin{lemma}\label{lem: cone}
	Let $C$ be a nonempty subset of $\HH$, and set 	
		\begin{math}
			K \coloneqq \pos{C}.
		\end{math}
	Then the following hold: 
	\begin{enumerate}
		\item\label{it: c01} $x \in \pc{K} \Leftrightarrow \sup \scal{x}{C} \leq 0$.
		\item\label{it: c1} \begin{math}
		K = \cone  \left( {\conv}{\left(C \cup \left\{0\right\}\right) } \right)  = \cone   \left(\left\{0\right\} \cup \conv{C} \right) = \cone  \left(\conv{C}\right) \cup \left\{0\right\}.
		\end{math}
		\item\label{it: c2} $K$ is the smallest convex cone containing $C\cup\left\{0\right\}$.
	\end{enumerate}
\end{lemma}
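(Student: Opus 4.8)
The plan is to prove one key claim first — that $K=\pos{C}$ is a convex cone, satisfies $C\cup\{0\}\subseteq K$, and is contained in every convex cone containing $C\cup\{0\}$ — and then to read off items \ref{it: c01}, \ref{it: c1} and \ref{it: c2} from it, together with \cref{fact: cone}\ref{it: cone1} and \cref{fact: cone}\ref{it: cone3}.

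For the key claim I would argue directly from \cref{eq: Rc}: $K$ is a cone and is convex because positive scalings of nonnegative combinations of points of $C$, and convex combinations of two such, are again nonnegative combinations of points of $C$; moreover $C\subseteq K$ via a singleton index set with coefficient $1$, and $0\in K$ via a singleton index set with coefficient $0$ (the only place $C\neq\varnothing$ is used). For minimality, I would take an arbitrary convex cone $L$ with $C\cup\{0\}\subseteq L$ and an arbitrary $y=\sum_{i\in I}\alpha_i x_i\in K$, set $s\coloneqq\sum_{i\in I}\alpha_i\geq 0$, and split into two cases: if $s=0$ then $y=0\in L$; if $s>0$ then $\sum_{i\in I}(\alpha_i/s)x_i$ is a convex combination of points of $C\subseteq L$, hence lies in $L$, so $y=s\sum_{i\in I}(\alpha_i/s)x_i\in L$ because $L$ is a cone. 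This yields $K\subseteq L$, which is precisely item \ref{it: c2}.

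Given the key claim, item \ref{it: c01} follows quickly: the forward implication holds because $C\subseteq K$, and for the converse, if $\sup\scal{x}{C}\leq 0$ then $\scal{x}{y}=\sum_{i\in I}\alpha_i\scal{x}{x_i}\leq 0$ for every $y=\sum_{i\in I}\alpha_i x_i\in K$, each summand being nonpositive. For item \ref{it: c1}, I would use \cref{fact: cone}\ref{it: cone3} to recognize $\cone(\conv(C\cup\{0\}))$ as the smallest convex cone containing $C\cup\{0\}$, hence equal to $K$ by the key claim; then \cref{fact: cone}\ref{it: cone1} gives $\cone(\{0\}\cup\conv{C})=\bigcup_{\lambda\in\RR_{++}}\lambda(\{0\}\cup\conv{C})=\{0\}\cup\bigcup_{\lambda\in\RR_{++}}\lambda\conv{C}=\{0\}\cup\cone(\conv{C})$, settling the second and fourth expressions at once; and finally $\cone(\conv{C})\cup\{0\}=K$ by double inclusion read off from \cref{eq: Rc} — one inclusion because each element of $\lambda\conv{C}$ with $\lambda>0$ is a nonnegative combination of points of $C$ and $0\in K$, the other by the same $s=\sum_{i\in I}\alpha_i$ dichotomy, writing $y\in K$ as $0$ (if $s=0$) or as an element of $s\conv{C}\subseteq\cone(\conv{C})$ (if $s>0$).

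I do not anticipate a real obstacle; the only delicate point is the bookkeeping around the origin. Since $\cone(\conv{C})$ need not contain $0$ in general, the explicit $\{0\}\cup(\,\cdot\,)$ in item \ref{it: c1} cannot be dropped, and the two-case split on $s$ must be applied consistently both in the minimality step and in the final inclusion.
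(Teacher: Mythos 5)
Your proposal is correct, but it inverts the paper's logical architecture. The paper proves \cref{lem: cone}\cref{it: c1} first, by element-chasing the chain $K \subseteq \cone\left(\conv\left(C\cup\{0\}\right)\right) \subseteq \cone\left(\{0\}\cup\conv C\right) = \cone\left(\conv C\right)\cup\{0\} \subseteq K$ (its \cref{eq: co-todo}), and only then deduces \cref{it: c2} as a one-line corollary via \cref{fact: cone}\cref{it: cone3}; in particular it never verifies by hand that $K$ is a convex cone, since that is inherited from the identification $K = \cone\left(\conv\left(C\cup\{0\}\right)\right)$. You instead prove \cref{it: c2} first --- checking directly that $\pos{C}$ is a convex cone containing $C\cup\{0\}$ and is contained in every such cone --- and then obtain the first equality of \cref{it: c1} abstractly, from uniqueness of the smallest convex cone containing $C\cup\{0\}$, rather than by two explicit inclusions. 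The remaining ingredients coincide: your treatment of \cref{it: c01} matches what the paper's terse ``follows from the definition of polar cones and \cref{eq: Rc}'' must mean, your union manipulation for $\cone\left(\{0\}\cup\conv C\right) = \cone\left(\conv C\right)\cup\{0\}$ is literally the paper's $E=F$ step, and the dichotomy on $s=\sum_{i\in I}\alpha_i$ is the same normalization trick the paper uses at each link of its chain. What your route buys is a self-contained minimality argument and a cleaner conceptual statement (``$\pos{C}$ is the smallest convex cone containing $C\cup\{0\}$'') from which everything else follows; what the paper's route buys is economy, since convexity of $K$ comes for free. Your closing remark that $\cone\left(\conv C\right)$ need not contain $0$ --- so the explicit $\{0\}$ in \cref{it: c1} cannot be dropped --- is correct and is exactly the bookkeeping the paper's case splits are handling.
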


\begin{proof}
	\cref{it: c01}: This follows from the definition of polar cones and \cref{eq: Rc}.
	
	\cref{it: c1}: Set $D \coloneqq \cone  \left( {\conv}{\left(C \cup \left\{0\right\}\right) } \right)$, $E \coloneqq \cone   \left(\left\{0\right\} \cup \conv{C} \right)$, and $F \coloneqq \cone  \left(\conv{C}\right) \cup \left\{0\right\}$. We shall establish that 
		\begin{equation}\label{eq: co-todo}
			K \subseteq D \subseteq E = F \subseteq K.
		\end{equation}
	First, take $x \in K$, say $x = \sum_{i \in I}\alpha_{i}x_{i}$, where $I$ is finite,  $\left\{\alpha_{i}\right\}_{i \in I} \subseteq \RR_{+}$, and $\left\{x_{i}\right\}_{i \in I} \subseteq C$; in addition, set $\alpha \coloneqq \sum_{i \in I} \alpha_{i}$. If $\alpha =0$, then, because $\left\{\alpha_{i}\right\}_{i \in I} \subseteq \RR_{+}$, we obtain $\left(\forall i \in I\right)~\alpha_{i}= 0$ and thus $x =0 \in D$; otherwise, we have  $\alpha > 0 $ and $x= \alpha \sum_{i\in I}\left(\alpha_{i} / \alpha \right)x_{i} \in \cone  \left( \conv{C}\right)\subseteq D$. Next, fix $y \in D$. Then  \cref{fact: cone}\cref{it: cone1} and \cref{eq: conv} yield the existence of $\lambda \in \RR_{++}$, a finite subset $\left\{\beta_{i}\right\}_{i \in J}$ of $\RR_{+}$, and a finite subset $\left\{x_{i}\right\}_{i \in J}$ of $C $ such that $y = \lambda \sum_{i \in J}\beta_{i} x_{i}$. In turn, if $\sum_{i \in J}\beta_{i} = 0$, then $\left(\forall i \in J\right)~\beta_{i} =0$ and so $y =0 \in E$; otherwise, $\sum_{i \in J}\beta_{i} >0$ and, upon setting $\beta \coloneqq \sum_{i \in J}\beta_{i}$, we get $y = \lambda \beta \sum_{i \in J}\left(\beta_{i} / \beta\right)x_{i} \in  \cone  \left(\conv{C}\right) \subseteq E$. Let us now prove that $E=F$. To do so, we infer from \cref{fact: cone}\cref{it: cone1} that 
		\begin{equation}
			E = \bigcup_{\lambda \in \RR_{++}}\lambda \left( \left\{0\right\} \cup \conv{C} \right) = \left\{0\right\} \cup \rb*{ \bigcup_{\lambda \in \RR_{++}} \lambda \conv{C}} = \left\{0\right\} \cup \cone \rb{\conv{C}} =F.
		\end{equation}
	Finally, take $z \in F$. If $z =0$, then clearly $z \in
	K$; otherwise, $z \in \cone \left(\conv{C}\right)$ and,
	by \cref{fact: cone}\cref{it: cone1} and \cref{eq: conv},
	there exist $\mu \in \RR_{++}$, a finite subset
	$\left\{\delta_{i}\right\}_{i \in T}$ of $\left]0,1
	\right]$, and a finite subset $\left\{x_{i}\right\}_{i
	\in T}$ of $C$ such that $z = \mu \sum_{i \in T}\delta_{i}x_{i} = \sum_{i\in T}\left(\mu \delta_{i}\right)x_{i} \in K$. Altogether,  \cref{eq: co-todo} holds.
	
	\cref{it: c2}: Since $K = {\cone}{ \left( {\conv}{\left(C \cup \left\{0\right\}\right) } \right) }$ by \cref{it: c1}, the conclusion thus follows from \cref{fact: cone}\cref{it: cone3}.
\end{proof}

% todo ice cream cone					

\begin{example}[Lorentz cone]\label{eg: ic-cone}
	Let $\alpha  $ and $\beta$ be in $\RR_{++}$,  set 
	\begin{equation}\label{eq: ic-cone}
	\bK_{\alpha} \coloneqq \menge*{ \rb{x,\xi} \in \b{\HH} = \HH \oplus \RR }{ \norm{x} \leq \alpha \xi },
	\end{equation}
	and set 
	\begin{equation}
	\bC_{\alpha,\beta } \coloneqq \sphere{0}{\beta} \times \left\{\beta/\alpha \right\} \subseteq \b{\HH}.
	\end{equation}
	Then $\bK_{\alpha}$ is a nonempty closed convex cone in $\b{\HH}$ and 
	\begin{equation}\label{eq: ic-cone-gen}
	\bK_{\alpha} = \pos{\bC_{\alpha,\beta}} = \cone\rb{\conv {\bC_{\alpha,\beta}} } \cup \{\bzero \}.
	\end{equation}
\end{example}

\begin{proof}
	Since $\bK_{\alpha}$ is the epigraph of the function
	$\norm{} /\alpha $, which is  continuous, convex, and
	positively homogeneous\footnote{A function $f\colon \HH
	\to \left]-\infty,+ \infty \right]$ is \emph{positively
	homogeneous} if $\rb{\forall x \in \HH}\rb{\forall \lambda \in \RR_{++}}~ f\rb{\lambda x } = \lambda f\rb{x}.$}, we deduce from \cite[Proposition 10.2]{bauschke2017convex} that $\bK_{\alpha}$ is a nonempty closed convex cone in $\b{\HH}$. Next, let us establish \cref{eq: ic-cone-gen}. In view of \cref{lem: cone}\cref{it: c1}, it suffices to show that $\bK_{\alpha} = \cone\rb{\conv{\bC_{\alpha,\beta}}} \cup \{\bzero\}$. Towards this aim, let us first observe that, due to \cite[Exercise 3.2]{bauschke2017convex} and \cref{eg: co-S}, 
		\begin{equation}\label{eq: co-Cpa}
		\conv{\b{C}_{\alpha,\beta}} = \rb{\conv{\sphere{0}{\beta}}} \times \rb{\conv \{\beta /\alpha  \}} = \ball{0}{\beta} \times \{\beta/\alpha\}.
		\end{equation}
	Now set $\bK \coloneqq \cone\rb{\conv{\b{C}_{\alpha,\beta}}} \cup \{\bzero\}$, and take $\bx=\rb{x,\xi} \in \bK_{\alpha}$. If $\xi =0$, then  \cref{eq: ic-cone} yields $x=0$ and so $\bx = \bzero \in \b{K}$. Otherwise,  $\xi >0$ and we get from \cref{eq: ic-cone} that $\norm{\beta \rb{\alpha \xi}^{-1} x} = \beta \rb{\alpha \xi}^{-1}\norm{x} \leq \beta $ or, equivalently, $\beta \rb{\alpha \xi}^{-1}x \in \ball{0}{\beta}$; therefore, it follows from \cref{eq: co-Cpa} that  
	\begin{equation}
	\rb{x,\xi} = \frac{\alpha \xi }{\beta}\rb*{\frac{\beta}{\alpha \xi }x , \frac{\beta}{\alpha} } \in \frac{\alpha \xi }{\beta} \rb*{\ball{0}{\beta} \times \{\beta/\alpha\}} = \frac{\alpha \xi}{\beta}\conv{ \b{C}_{\alpha,\beta}} \subseteq \cone\rb{\conv{ \b{C}_{\alpha,\beta}}} \subseteq \b{K}.
	\end{equation}
Altogether, $\bK_{\alpha} \subseteq \bK$. Conversely, take $\by \in \cone\rb{\conv{ \b{C}_{\alpha,\beta}}} $. 
Then, by \cref{fact: cone}\cref{it: cone1} and \cref{eq: co-Cpa}, there exist $\lambda \in \RR_{++}$ and $y \in \ball{0}{\beta}$ such that $\by=\lambda\rb{y,\beta /\alpha} = \rb{\lambda y, \lambda\beta/\alpha}$. In turn, since $\norm{\lambda y} \leq \lambda\beta = \alpha \rb{ \lambda\beta /\alpha}$, we obtain $\by \in \b{K}_{\alpha}$. Hence $\cone\rb{\conv{ \b{C}_{\alpha,\beta}}} \subseteq \bK_{\alpha}$. This and the fact that $\bzero \in \bK_{\alpha}$ yield $\bK \subseteq \bK_{\alpha}$. Hence $\bK_{\alpha} = \bK$, as claimed. 
\end{proof}

% TODO IMPROVEMENT													

Here is an improvement of \cite[Corollary 6.53]{bauschke2017convex}.

\begin{proposition}\label{pp: ipv}
	Let $C$ be a nonempty closed convex set in $\HH$. Suppose that there exists a nonempty closed subset $D$ of $C$ such that  $0 \notin D$ and that one of the following holds:  
	\begin{enumerate}\renewcommand{\labelenumi}{\rm (\alph{enumi})}\renewcommand{\theenumi}{\rm (\alph{enumi})}
		\item\label{it: ipv-a1} $\left(\cone{D}\right) \cup \left\{0\right\}  = \left(\cone{C}\right) \cup \left\{0\right\}$.
		\item\label{it: ipv-a2} $\ccone{D} = \ccone{C}$.
	\end{enumerate}
	Then the following hold: 
	\begin{enumerate}
		\item\label{it: ipv1} $\left(\cone{C}\right) \cup \left(\rec{C}\right) = \ccone{C}$.
		\item\label{it: ipv2} Suppose that $\rec{C} = \left\{0\right\}$. Then $\cone  \left(C\cup \left\{0\right\}\right)$ is closed.
	\end{enumerate}
\end{proposition}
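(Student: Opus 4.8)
The plan is to establish \cref{it: ipv1} and then obtain \cref{it: ipv2} as a short consequence. The first thing I would do is notice that both \cref{it: ipv-a1} and \cref{it: ipv-a2} yield $\ccone D = \ccone C$: from $D \subseteq C$ and \cref{fact: cone}\cref{it: cone1} we get $\cone D \subseteq \cone C$, so under \cref{it: ipv-a1} the chain $\cone D \subseteq \cone C \subseteq (\cone D) \cup \left\{0\right\}$ passes to closures and, since $0 \in \closu{\cone D}$ by \cref{rem: clar} (and using \cref{fact: cone}\cref{it: cone2}), gives $\closu{\cone D} = \closu{\cone C}$; under \cref{it: ipv-a2} this is immediate. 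Hence I may use $\ccone C = \closu{\cone C} = \closu{\cone D}$ throughout.

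For \cref{it: ipv1}, the inclusion ``$\supseteq$'' is routine and needs no hypothesis on $D$: clearly $\cone C \subseteq \closu{\cone C}$, and if $r \in \rec C$ then fixing any $x_0 \in C$ and iterating $r + C \subseteq C$ gives $x_0 + nr \in C$, hence $\left(n+1\right)^{-1}\left(x_0 + nr\right) \in \cone C$ for every $n \in \NN$, and letting $n \to \infty$ shows $r \in \closu{\cone C} = \ccone C$. The substantive part is ``$\subseteq$''. Here I would pick $y \in \ccone C$; if $y = 0$ then $y \in \rec C$, so assume $y \ne 0$, and use $\ccone C = \closu{\cone D}$ together with \cref{fact: cone}\cref{it: cone1} to write $y = \lim_n \lambda_n d_n$ with $\lambda_n \in \RR_{++}$ and $d_n \in D$. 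The key step is that, because $D$ is closed and $0 \notin D$, the number $\varepsilon \coloneqq \inf_{d \in D}\norm{d}$ is strictly positive, so $\lambda_n = \norm{\lambda_n d_n}/\norm{d_n} \leq \norm{\lambda_n d_n}/\varepsilon$ stays bounded; after passing to a subsequence $\lambda_n \to \lambda \in \RR_+$. If $\lambda > 0$, then $d_n = \left(\lambda_n d_n\right)/\lambda_n \to y/\lambda$, and closedness of $D$ forces $y/\lambda \in D \subseteq C$, whence $y = \lambda\left(y/\lambda\right) \in \cone C$. If $\lambda = 0$, I would verify $y \in \rec C$ straight from the definition: for every $c \in C$ and every large $n$ the point $\left(1-\lambda_n\right)c + \lambda_n d_n$ is a convex combination of elements of $C$, hence lies in $C$, and it converges to $c + y$, so closedness of $C$ gives $c + y \in C$; as $c \in C$ was arbitrary, $y + C \subseteq C$. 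In all cases $y \in \left(\cone C\right) \cup \left(\rec C\right)$, which finishes \cref{it: ipv1}.

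For \cref{it: ipv2}: by \cref{fact: cone}\cref{it: cone1}, $\cone\left(C \cup \left\{0\right\}\right) = \bigcup_{\lambda \in \RR_{++}}\lambda\left(C \cup \left\{0\right\}\right) = \left(\cone C\right) \cup \left\{0\right\}$, and since $\rec C = \left\{0\right\}$, part \cref{it: ipv1} identifies this with $\left(\cone C\right) \cup \left(\rec C\right) = \ccone C$, which is closed by the very definition of the closed conical hull; hence $\cone\left(C \cup \left\{0\right\}\right)$ is closed.

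I expect the main obstacle to be precisely the ``$\subseteq$'' inclusion in \cref{it: ipv1}: one has to recognize that the hypotheses on $D$ are exactly what excludes the degenerate regime in which $\lambda_n \to \infty$ and $d_n \to 0$ (the mechanism behind the failure of the identity in \cref{it: ipv1} when, say, $C$ is a closed ball having $0$ on its boundary), so that only the two harmless cases $\lambda_n \to \lambda > 0$ and $\lambda_n \to 0$ survive, handled respectively by closedness of $D$ and by closedness and convexity of $C$.
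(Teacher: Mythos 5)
Your proof is correct and follows essentially the same route as the paper's: reduce both hypotheses to $\closu{\cone{D}}=\closu{\cone{C}}$, write $y\in\ccone{C}$ as a limit $\lambda_{n}d_{n}\to y$ with $d_{n}\in D$, use the closedness of $D$ together with $0\notin D$ to keep $\lambda_{n}$ bounded, and split into the cases $\lambda>0$ (landing in $\cone{C}$) and $\lambda=0$ (landing in $\rec{C}$). The only differences are cosmetic: you bound $\lambda_{n}$ directly via $\inf_{d\in D}\norm{d}>0$ instead of ruling out $\lambda_{n}\to+\infty$ by contradiction, and you replace the paper's appeal to the sequential characterization of $\rec{C}$ (\cite[Proposition~6.51]{bauschke2017convex}) with short self-contained arguments in both directions.
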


\begin{proof}
	Let us first show that  \cref{it: ipv-a1}\ensuremath{\Rightarrow}\cref{it: ipv-a2}. To establish this,  assume that \cref{it: ipv-a1} holds. Then, since  $0 \in   \closu{\cone{D}}$ and $0 \in  \closu{\cone{C}}$ due to \cref{rem: clar}, we infer from \cref{fact: cone}\cref{it: cone2} that 
	\begin{equation}
	\ccone{D} =  \closu{\cone{D}} \cup\left\{0\right\} = \closu{\left(\cone{D}\right) \cup \left\{0\right\}}= \closu{\left(\cone{C}\right) \cup \left\{0\right\}} =  \closu{\cone{C}} \cup\left\{0\right\} = \ccone{C},
	\end{equation}
	which verifies the claim. Thus, it is enough to assume that \cref{it: ipv-a2} holds and to show that \cref{it: ipv1}\&\cref{it: ipv2} hold.
	
	\cref{it: ipv1}: Clearly $\cone{C}\subseteq \ccone{C}$. We now prove that $\rec{C}\subseteq \ccone{C}$. To this end, take $x \in \rec{C}$. Then \cite[Proposition 6.51]{bauschke2017convex} ensures the existence of sequences $\left(x_{n}\right)_{n \in \NN}$ in $C$ and $\left(\alpha_{n}\right)_{n \in \NN}$ in $\left] 0,1\right]$ such that  $\alpha_{n}x_{n}\to x$. Hence, because $\left\{\alpha_{n} x_{n}\right\}_{n \in \NN} \subseteq \cone{C}$ by \cref{fact: cone}\cref{it: cone1}, we deduce from \cref{fact: cone}\cref{it: cone2} that  $x \in \closu{\cone{C}} = \ccone{C}$. Thus $\left(\cone{C}\right) \cup \rec{C} \subseteq \ccone{C}$. Conversely, fix $y \in \ccone{C} = \ccone{D}$. It then follows from \cref{fact: cone}\cref{it: cone2} that $y \in \closu{\cone{D}}$, and therefore, in view of \cref{fact: cone}\cref{it: cone1}, there exist sequences $\left(\beta_{n}\right)_{n \in \NN}$ in $\RR_{++}$ and $\left(y_{n}\right)_{n \in \NN}$ in $D$ such that 
		\begin{equation}\label{eq: ipv-yn}
			\beta_{n} y_{n} \to y.
		\end{equation}
	After passing to subsequences and relabeling if necessary, we assume that  
		\begin{equation}\label{eq: ipv-bn}
			\beta_{n} \to \beta \in \left[0,+\infty\right].
		\end{equation}
	In turn, let us establish that $\beta \in \RR_{+}$ by contradiction: assume that $\beta =+\infty$. Then it follows from \cref{eq: ipv-yn} that $\norm{y_{n}} = \left(1/\beta_{n}\right)\norm{\beta_{n}y_{n}} \to 0$  or, equivalently, $y_{n}\to 0$. Hence, since $\left\{y_{n}\right\}_{n \in \NN} \subseteq D$, the closedness of $D$ asserts that $0 \in D$, which violates our assumption. Therefore $\beta \in \RR_{+}$, and this leads to two conceivable cases: 
		
		\hspace{\parindent}(A) $\beta =0$: Then, by \cref{eq: ipv-bn}, we can assume without loss of generality that $\left\{\beta_{n}\right\}_{n \in \NN} \subseteq \left]0,1 \right]$. In turn, since $\left\{y_{n}\right\}_{n \in \NN} \subseteq D \subseteq C$, we infer from \cref{eq: ipv-yn}\&\cref{eq: ipv-bn} and \cite[Proposition 6.51]{bauschke2017convex} that $y \in \rec{C}$.
		
		\hspace{\parindent}(B) $\beta > 0$: Then, in view of \cref{eq: ipv-yn}\&\cref{eq: ipv-bn}, $y_{n} = \left(1/\beta_{n}\right)\left(\beta_{n}y_{n}\right) \to y/\beta$. Therefore, because $\left\{y_{n}\right\}_{n \in \NN}\subseteq C$ and $C$ is closed, we obtain $y/\beta \in C$. Consequently, $y \in \beta C\subseteq \cone{C}$.
		
		To sum up, $\left(\cone{C}\right) \cup \left(\rec{C}\right) = \ccone{C}$, as announced.
		
	\cref{it: ipv2}: Since $C =\conv{C}$ by the convexity of $C$, we derive from \cref{it: ipv1} and \cref{lem: cone}\cref{it: c1} that $\ccone{C} = \left(\cone{C}\right) \cup \left\{0\right\} = \cone \left(C \cup \left\{0\right\}\right)$, which guarantees that $\cone\left(C\cup\left\{0\right\}\right)$ is closed.
\end{proof}

\begin{corollary}\label{cor: ipv}
	Let $C$ be a nonempty subset of $\HH$, and set $K \coloneqq \pos{C}$.   Suppose that $0 \notin \conv{C}$ and that $\conv{C}$ is weakly compact. Then $K$ is the smallest closed convex cone containing $C \cup \left\{0\right\}$.
\end{corollary}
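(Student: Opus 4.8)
The plan is to combine \cref{lem: cone}, which identifies $K = \pos{C}$ as the smallest convex cone containing $C \cup \{0\}$, with \cref{pp: ipv}\cref{it: ipv2}, which gives closedness of $\cone(C' \cup \{0\})$ for a suitable closed convex set $C'$. The natural choice is $C' \coloneqq \conv{C}$, which is convex by definition and closed because weakly compact sets in a Hilbert space are norm-closed. Thus the first step is to check that $\conv{C}$ satisfies the hypotheses of \cref{pp: ipv}: it is nonempty closed convex, and we are given $0 \notin \conv{C}$.

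Next I would verify the remaining hypotheses of \cref{pp: ipv}, namely the existence of a nonempty closed subset $D$ of $\conv{C}$ with $0 \notin D$ satisfying one of \cref{it: ipv-a1} or \cref{it: ipv-a2}; the obvious choice is $D \coloneqq \conv{C}$ itself, for which \cref{it: ipv-a1} holds trivially. I would also need to invoke $\rec(\conv{C}) = \{0\}$: this is where weak compactness enters, since a weakly compact (equivalently, in a Hilbert space, bounded closed convex) set has trivial recession cone. With this, \cref{pp: ipv}\cref{it: ipv2} applied to $\conv{C}$ gives that $\cone(\conv{C} \cup \{0\})$ is closed.

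Finally I would reconcile notation: by \cref{lem: cone}\cref{it: c1}, $K = \pos{C} = \cone(\conv(C \cup \{0\}))$, and one checks easily that $\conv(C \cup \{0\}) = \conv{C} \cup (\text{segments to }0)$, but more directly $\cone(\conv(C \cup \{0\})) = \cone(\{0\} \cup \conv{C}) = \cone(\conv{C}) \cup \{0\} = \cone(\conv{C} \cup \{0\})$, all of which are already spelled out in the chain of equalities in \cref{lem: cone}\cref{it: c1}. Hence $K$ is closed. Since \cref{lem: cone}\cref{it: c2} already says $K$ is the smallest convex cone containing $C \cup \{0\}$, and any closed convex cone containing $C \cup \{0\}$ must contain $K$ (by minimality) hence equals a superset of $K$, while $K$ itself is now closed, $K$ is the smallest \emph{closed} convex cone containing $C \cup \{0\}$.

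The only mild subtlety — hardly an obstacle — is making sure weak compactness of $\conv{C}$ genuinely yields both norm-closedness and $\rec(\conv{C}) = \{0\}$; both are standard (a weakly compact set is weakly closed hence convex-norm-closed, and is bounded, and a bounded closed convex set in a Hilbert space has zero recession cone). Everything else is a direct citation of \cref{lem: cone} and \cref{pp: ipv}.
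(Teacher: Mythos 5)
Your proposal is correct and follows essentially the same route as the paper's own proof: both reduce the claim to closedness of $K$ via \cref{lem: cone}\cref{it: c2}, apply \cref{pp: ipv}\cref{it: ipv2} to $\conv{C}$ with $D = \conv{C}$, and extract norm-closedness and $\rec\rb{\conv{C}} = \{0\}$ from weak compactness (weak closedness plus convexity giving closedness, boundedness giving the trivial recession cone), finishing with the identity $K = \cone\rb{\{0\} \cup \conv{C}}$ from \cref{lem: cone}\cref{it: c1}. No gaps.
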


	\begin{proof}
		According to \cref{lem: cone}\cref{it: c2}, it suffices to verify that $K$ is closed. Since $\conv{C}$ is weakly compact, it is weakly closed and bounded. In turn, on the one hand, since $\conv{C}$ is convex and weakly closed, we derive from \cite[Theorem 3.34]{bauschke2017convex} that $\conv{C}$ is closed. On the other hand, the boundedness of $\conv{C}$ guarantees that $\rec\rb{\conv{C} } = \{0\}$. Altogether, because $K = \cone \rb{\{0\} \cup \conv{C} }$ due to \cref{lem: cone}\cref{it: c1} and because $0 \notin \conv{C}$, applying \cref{pp: ipv}\cref{it: ipv2} to $\conv{C}$ (with the subset $D${\textemdash}as in the setting of \cref{pp: ipv}{\textemdash}being $\conv{C}$) yields the closedness of $K$, as required.
	\end{proof}

%todo examples for improvement									

The following two examples provide instances in which the assumption of \cref{pp: ipv} holds.

\begin{example}
	Let $C$ be a nonempty closed convex subset of $\HH$ such that $C\smallsetminus \left\{0\right\} \neq \varnothing$. Suppose that there exists $\rho \in \RR_{++}$ satisfying 
	\begin{equation}\label{eq: eg-as-ipv}
	\left(\cone{C}\right) \cap \sphere{0}{\rho} \subseteq C,
	\end{equation}
	and set $D\coloneqq \left(\ccone{C}\right) \cap \sphere{0}{\rho}$. Then the following hold: 
	\begin{enumerate}
		\item\label{it: ipeg-11} $D$ is a nonempty closed subset of $C$ and $0 \notin D$.
		\item\label{it: ipeg-12} $\left(\cone{D}\right) \cup \left\{0\right\} = \left(\cone{C}\right) \cup \left\{0\right\}$.
	\end{enumerate}
\end{example}

	\begin{proof}
		\cref{it: ipeg-11}: The closedness of $D$ is clear. Next, since $0 \notin \sphere{0}{\rho}$, we have $0 \notin D$. In turn, since $C \smallsetminus  \left\{0\right\} \neq \varnothing$, we see that  $\varnothing\neq \ccone{C} \neq \left\{0\right\}$, and since $\ccone{C}$ is a cone, \cref{rem: clar} yields $D \neq \varnothing$. Finally, it follows from \cref{fact: cone}\cref{it: cone2}, \cref{lem: topo}\cref{it: topo1} (applied to $\cone{C}$), \cref{eq: eg-as-ipv}, and the closedness of $C$ that 
			\begin{math}
				D = \closu{\cone{C}} \cap \sphere{0}{\rho} = \closu{\left(\cone{C}\right) \cap \sphere{0}{\rho} } \subseteq \closu{C} = C,
			\end{math}
		as claimed.
		
		\cref{it: ipeg-12}: Because $D\subseteq C$, we get $\left(\cone{D}\right) \cup \left\{0\right\} \subseteq \left(\cone{C}\right) \cup \left\{0\right\}$. Conversely, take $x \in \cone{C}$. We then deduce from \cref{fact: cone}\cref{it: cone1} the existence of $\lambda \in \RR_{++}$ and  $y \in C$ such that $x = \lambda y$. If $y =0$, then $x =0 \in \left(\cone{D}\right) \cup \left\{0\right\}$. Otherwise, $\norm{y} \neq 0$ and, since $\rho y /\norm{y} \in  \left(\cone{C}\right) \cap \sphere{0}{\rho} \subseteq D$, we obtain $ x = \lambda y = \left(\lambda \norm{y}/\rho\right)\left(\rho y /\norm{y}\right) \in \cone{D}$. Therefore $\left(\cone{C}\right) \cup \left\{0\right\} \subseteq \left(\cone{D}\right) \cup \left\{0\right\}$, and the conclusion follows.
	\end{proof}

Before we present a new proof of the well-known
fact that finitely generated cones are closed (see
\cite[Theorem~19.1, Corollary~2.6.2 and the remarks following
Corollary~2.6.3]{rocky}), we make a few comments.

\begin{remark}\label{rm: fi-cone}
	Let $\set{x_{i}}{ i \in I}$ be a finite subset of $\HH$, and set $C\coloneqq \conv\set{x_{i}}{ i \in I }.$
	\begin{enumerate}
		\item\label{it: rmfc-1}  Since $C = \conv \cup_{i \in I}\set{x_{i}}{}$, \cite[Proposition 3.39(i)]{bauschke2017convex} implies that $C$ is compact, and so it is closed and bounded. In turn, the boundedness of $C$ gives $\rec{C}= \{0\}$.
		\item The geometric interpretation of the proof of \cref{eg: fi-cone} is as follows. If  $y$ lies in $C \smallsetminus \left\{0\right\}$, then the ray $\RR_{+}y$ must intersect a {\textquotedblleft}{face\textquotedblright} of $C$ that does not contain $0$. 
		\item \cref{eg: fi-cone} illustrates that the assumption of \cref{pp: ipv} is mild and covers the case of finitely generated cones.
	\end{enumerate}
\end{remark}

\begin{example}\label{eg: fi-cone}
	Let $\left\{x_{i}\right\}_{i \in I}$ be a finite subset of $\HH$ and set 
		\begin{equation}
			K \coloneqq \sum_{i \in I} \RR_{+} x_{i}.
		\end{equation}
	Then $K$ is the 
	smallest closed convex cone containing 
	$\left\{x_{i}\right\}_{i \in I} \cup \left\{0\right\}$.
\end{example}
	
	\begin{proof}
		We derive from \cref{eq: Minsum} and  \cref{lem: cone}\cref{it: c2} that   $K$ is the smallest convex cone in $\HH$ containing $\set{x_{i}}{i\in I} \cup \{0\}$. Therefore, it suffices to establish the closedness of $K$. Towards this goal,  we first infer from \cref{lem: cone}\cref{it: c1} (applied to $\set{x_{i}}{i \in I}$) that 
		\begin{equation}\label{eq: fc-K}
		K = \cone \rb{ \{0\} \cup  \conv  \set{x_{i}}{i\in I}   }.
		\end{equation} Furthermore, we  assume that 
			\begin{equation}\label{eq: fc-as1}
				\left\{x_{i}\right\}_{i \in I} \smallsetminus \left\{0\right\} \neq \varnothing,
			\end{equation}
		because otherwise the claim is trivial. In turn, set $C \coloneqq \conv \left\{x_{i}\right\}_{i \in I}$, 
			\begin{equation}
				\mathcal{I} \coloneqq \menge*{\varnothing \neq J \subseteq I}{0 \notin \conv \set{x_{i}}{i \in J} },
			\end{equation}
		and 
			\begin{equation}
				D \coloneqq \bigcup_{J \in \mathcal{I}} \conv \set{x_{i}}{i \in J} \subseteq C.
			\end{equation}
		Then, by \cref{eq: fc-as1}, $\mathcal{I}$ is nonempty,\footnote{We just need to pick a nonzero element $x_{j}$ of $\set{x_{i}}{i \in I}$ and set $J\coloneqq \left\{j\right\}$.} and thus, $0\notin D \neq \varnothing$. Moreover, $D$ is closed as a finite union of closed sets, namely $\fa{ \conv  \set{x_{i}}{i \in J} }{J \in \mathcal{I}}$. We now claim that 
			\begin{equation}\label{eq: fc-todo}
				\left(\cone{D}\right) \cup \{0\} = \left(\cone{C}\right) \cup \{0\}.
			\end{equation}
		To do so, it suffices to verify that $\left(\cone{C}\right) \cup \{0\} \subseteq \left(\cone{D}\right) \cup \{0\}$. Take $x \in \left(\cone{C}\right)\smallsetminus \{0\}$. Then \cref{fact: cone}\cref{it: cone1} ensures the existence of $\lambda \in \RR_{++}$ and  
			\begin{equation}\label{eq: fc-y}
				y \in C \smallsetminus \{0\}
			\end{equation}
		such that $x = \lambda y$. Since $y \in C = \conv \set{x_{i}}{i \in I}$, there exist a nonempty subset $J$ of $I$ and  
			\begin{equation}\label{eq: fc-a}
				\set{\alpha_{i}}{ i \in J } \subseteq \left]0,1 \right]
			\end{equation}
		such that $\sum_{i \in J}\alpha_{i} =1$ and $y = \sum_{i \in J} \alpha_{i} x_{i}$. If $J \in \mathcal{I}$, then $y \in \conv \set{x_{i}}{ i \in J}  \subseteq D$ and hence $x = \lambda y \in \cone{D}$. Otherwise, $0 \in \conv\set{x_{i}}{i \in J}$, and there thus exists $\set{\beta_{i}}{i \in J} \subseteq [0,1]$ such that $\sum_{i \in J} \beta_{i} x_{i} = 0$ and $J_{+} \coloneqq \menge{ i \in J }{\beta_{i} > 0  } \neq \varnothing$. In turn, set 
			\begin{equation}\label{eq: fc-g}
				\gamma \coloneqq \min_{ i \in J_{+}} \frac{\alpha_{i}}{\beta_{i}}
			\end{equation}
		and $\left(\forall i \in J\right)~ \delta_{i} \coloneqq \alpha_{i} - \gamma \beta_{i}$. By \cref{eq: fc-a} and \cref{eq: fc-g},   
			\begin{equation}\label{eq: fc-dt}
				\left(\forall i \in J\right) \quad \delta_{i} \geq 0.
			\end{equation}
		Now fix $j \in J_{+}$ such that $\alpha_{j} / \beta_{j} = \gamma$. Then we get $\delta_{j} = 0$ as well as $J \smallsetminus \{j\} \neq \varnothing$  (since otherwise, $J= \{j\}$ and $y= \alpha_{j} x_{j} = \gamma \beta_{j} x_{j} = 0$, which is absurd), and hence, 
			\begin{equation}\label{eq: fc-re-y}
				y = y - \gamma 0 = \sum_{i \in J} \alpha_{i} x_{i} - \gamma \sum_{i \in J}\beta_{i} x_{i} = \sum_{i \in J \smallsetminus \{j\} } \delta_{i} x_{i}.
			\end{equation}
		Therefore, in view of \cref{eq: fc-dt}, 
		\cref{eq: fc-re-y}, and \cref{eq: fc-y}, we must have $  \sum_{i \in J\smallsetminus\{j\}} \delta_{i} > 0$. In turn, if $J\smallsetminus\{j\} \in \mathcal{I}$, then set $\delta \coloneqq \sum_{i \in J\smallsetminus\{j\}} \delta_{i} $ and observe that  $y= \delta \sum_{i \in J\smallsetminus \{j\}} \rb{\delta_{i} / \delta} x_{i} \in \cone{D}$, which yields $x=\lambda y \in \cone{D}$. Otherwise, we reapply the procedure to $y = \sum_{i \in J \smallsetminus \{j\}} \delta_{i} x_{i}$ recursively until $y$ can be written as $y = \sum_{i \in J'}\gamma_{i}x_{i}$, where $J' \in \mathcal{I}$ and $\set{\gamma_{i}}{i \in J'} \subseteq \RR_{+}$ satisfying $\mu \coloneqq \sum_{i \in J'}\gamma_{i} > 0$. Consequently, $y = \mu \sum_{i \in J'} \rb{\gamma_{i} / \mu } x_{i} \in \cone{D}$, from which we deduce that $x = \lambda y \in \cone{D}$. Thus \cref{eq: fc-todo} holds, and since $\rec{C} = \left\{0\right\}$ (see \cref{rm: fi-cone}\cref{it: rmfc-1}), it follows from \cref{pp: ipv}\cref{it: ipv2} (applied to $C = \conv\set{x_{i}}{i \in I}$) and \cref{eq: fc-K} that $K$ is closed, as desired.
	\end{proof}

%todo projection operators									

\section{Projection operators}

\label{s:proj}

Let $C$ be a nonempty subset of $\HH$. 
Recall that its \emph{distance function} is 
	\begin{equation}
		d_{C} \colon \HH\to \RR : x \mapsto \inf_{y \in C}\norm{x-y} 
	\end{equation}
while the corresponding 
\emph{projection operator} (or \emph{projector})
is the set-valued mapping 
	\begin{equation}
		P_{C} \colon \HH \to 2^{\HH} : x \mapsto \menge*{ u \in C}{ \norm{x-u}  = d_{C}{\left(x\right)} }.
	\end{equation}
Furthermore, if $C$ is closed and convex, then, for every $x \in \HH$, $P_{C}x $ is a singleton and we shall identify $P_{C}x$ with its unique element which is characterized by 
	\begin{equation}\label{eq: pr-C}
		P_{C}x \in C \quad \text{and} \quad \left(\forall y \in C\right)~\scal{y-P_{C}x}{x - P_{C}x} \leq 0;
	\end{equation}
see, for instance, \cite[Theorem 3.16]{bauschke2017convex}.
We start by recalling some known results.

\begin{fact}\label{fact: pr-K}
	Let $K$ be a nonempty closed convex cone in $\HH$, and let $x$ and $p$ be in $\HH$. Then 
	\begin{equation}
	p = P_{K}x \iff \left[\; p \in K,~ x-p \perp p,~\text{and~} x -p \in K^{\ominus}   \;\right].
	\end{equation}
\end{fact}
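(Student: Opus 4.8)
The plan is to reduce everything to the variational characterization of the projection onto a closed convex set recorded in \eqref{eq: pr-C}, using the conical structure of $K$ to upgrade the single inequality there to the three stated conditions. A preliminary observation I would make at the outset is that, since $K$ is a nonempty cone, \cref{rem: clar} gives $0\in\closu{K}=K$ (using that $K$ is closed), so $0\in K$; this will be used to supply admissible test vectors.

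For the forward implication, suppose $p=P_{K}x$. By \eqref{eq: pr-C} we have $p\in K$ and $\scal{y-p}{x-p}\leq 0$ for all $y\in K$. Plugging in $y=0$ and $y=2p$ (both in $K$, since $K$ is a cone containing $0$) yields $\scal{p}{x-p}\geq 0$ and $\scal{p}{x-p}\leq 0$ respectively, hence $x-p\perp p$. Next, fix $y\in K$; for every $\lambda\in\RR_{++}$ we have $\lambda y\in K$, so $\scal{\lambda y-p}{x-p}\leq 0$, that is, $\lambda\scal{y}{x-p}\leq\scal{p}{x-p}=0$; letting $\lambda\to+\infty$ forces $\scal{y}{x-p}\leq 0$. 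As $y\in K$ was arbitrary, $\sup\scal{x-p}{K}\leq 0$, i.e. $x-p\in K^{\ominus}$.

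For the converse, suppose $p\in K$, $x-p\perp p$, and $x-p\in K^{\ominus}$. Then for every $y\in K$ the membership $x-p\in K^{\ominus}$ gives $\scal{y}{x-p}\leq 0$, so $\scal{y-p}{x-p}=\scal{y}{x-p}-\scal{p}{x-p}=\scal{y}{x-p}-0\leq 0$. Since $K$ is a nonempty closed convex set and $p\in K$, the characterization \eqref{eq: pr-C} then yields $p=P_{K}x$.

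I do not anticipate a genuine obstacle: the argument is a routine application of \eqref{eq: pr-C}. The only points needing a little care are justifying $0\in K$ (via closedness and \cref{rem: clar}), the choice of the two test vectors $0$ and $2p$ used to extract the orthogonality relation, and the limiting argument $\lambda\to+\infty$ that converts the cone inequality into membership in the polar cone.
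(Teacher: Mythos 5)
Your proof is correct. The paper does not prove this fact at all{\textemdash}it simply cites \cite[Proposition~6.28]{bauschke2017convex}{\textemdash}and your argument is the standard one found there: deduce $\scal{p}{x-p}=0$ by testing \cref{eq: pr-C} with $y=0$ and $y=2p$, pass to $\lambda y$ to obtain $x-p\in K^{\ominus}$, and reverse the computation for the converse; all the auxiliary points you flag ($0\in K$ via \cref{rem: clar} and closedness, the admissibility of the test vectors) are handled correctly.
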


\begin{proof}
	See, e.g., \cite[Proposition 6.28]{bauschke2017convex}.
\end{proof}

Let us recall the celebrated Moreau decomposition for cones; see \cite{moreau1962decomposition}.

\begin{fact}[Moreau]\label{fact: Moreau}
	Let $K$ be a nonempty closed convex cone in $\HH$. Then 
	\begin{equation}
	\left(\forall x \in \HH\right) \quad x =P_{K}x + P_{K^{\ominus}}x \quad \text{and} \quad \norm{x}^{2} = d_{K}^{2}\rb{x} + d_{\pc{K}}^{2}\rb{x}.
	\end{equation}
\end{fact}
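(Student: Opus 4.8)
The plan is to obtain both assertions directly from the variational characterization of the conic projector in \cref{fact: pr-K}, applied symmetrically to $K$ and to its polar $K^{\ominus}$, together with the bipolar identity $\rb{K^{\ominus}}^{\ominus} = K$, which holds because $K$ is a nonempty closed convex cone (the bipolar theorem for cones; see, e.g., \cite{bauschke2017convex}). Fix $x \in \HH$, set $p \coloneqq P_{K}x$, and set $q \coloneqq x - p$. Then \cref{fact: pr-K} gives $p \in K$, $q \perp p$, and $q \in K^{\ominus}$.

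The key step is to show that $q = P_{K^{\ominus}}x$. Since $K^{\ominus}$ is again a nonempty closed convex cone, \cref{fact: pr-K} (applied with $K^{\ominus}$ in place of $K$) asserts that $q = P_{K^{\ominus}}x$ if and only if $q \in K^{\ominus}$, $x - q \perp q$, and $x - q \in \rb{K^{\ominus}}^{\ominus}$. The first condition was just recorded; the second reads $p \perp q$, which holds; and the third reads $p \in \rb{K^{\ominus}}^{\ominus} = K$, which holds by the bipolar theorem together with $p \in K$. Hence $q = P_{K^{\ominus}}x$, and consequently $x = p + q = P_{K}x + P_{K^{\ominus}}x$, which is the first assertion.

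For the second assertion, note that $d_{K}^{2}\rb{x} = \norm{x - P_{K}x}^{2} = \norm{q}^{2}$ and $d_{K^{\ominus}}^{2}\rb{x} = \norm{x - P_{K^{\ominus}}x}^{2} = \norm{p}^{2}$. Since $p \perp q$ and $x = p + q$, the Pythagoras identity yields $\norm{x}^{2} = \norm{p}^{2} + \norm{q}^{2} = d_{K}^{2}\rb{x} + d_{K^{\ominus}}^{2}\rb{x}$, as claimed.

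The only non-elementary ingredient is the bipolar identity $\rb{K^{\ominus}}^{\ominus} = K$; the remainder is bookkeeping with \cref{fact: pr-K}, so the one point that must be handled carefully is the symmetric application of that characterization to both $K$ and $K^{\ominus}$. (Since the statement is flagged as a \textbf{Fact} attributed to Moreau, one may alternatively simply cite \cite{moreau1962decomposition} or \cite{bauschke2017convex} in lieu of reproducing this argument.)
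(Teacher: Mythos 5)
Your proof is correct. Note, however, that the paper does not actually prove this statement: it is recorded as a \emph{Fact} with a pointer to \cite{moreau1962decomposition}, so there is no in-paper argument to compare against. What you have written is the standard derivation, and it is sound: setting $p \coloneqq P_{K}x$ and $q \coloneqq x-p$, the characterization in \cref{fact: pr-K} gives $p\in K$, $q\perp p$, $q\in\pc{K}$, and applying the same characterization to the closed convex cone $\pc{K}$ shows $q=P_{\pc{K}}x$ once you know $p\in\rb{\pc{K}}^{\ominus}=K$; the distance identity then follows from Pythagoras since $x=p+q$ with $p\perp q$. The single nontrivial external ingredient is the bipolar identity $\rb{\pc{K}}^{\ominus}=K$ for nonempty closed convex cones, which is exactly the result the paper itself invokes elsewhere as \cite[Corollary~6.34]{bauschke2017convex}, so relying on it is consistent with the paper's conventions. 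In short: the paper buys the statement by citation, while you rederive it from \cref{fact: pr-K} plus the bipolar theorem; both are legitimate, and your version has the minor advantage of making the logical dependence on \cref{fact: pr-K} explicit.
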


\begin{lemma}\label{lem: cpt}
	Let $K$ be a nonempty closed convex cone in $\HH$, and let $x \in \HH.$ Then the following hold: 
		\begin{enumerate}
			\item\label{it: cpt1} $P_{K}x \neq 0 \Leftrightarrow x \in \HH\smallsetminus \pc{K}.$
			\item\label{it: cpt2} Suppose that $P_{K}x \neq 0$. Let $\rho \in \RR_{++}$, and set $p \coloneqq \rb{\rho / \norm{P_{K}x} } P_{K}x$. Then 
				\begin{equation}\label{eq: cpt-dst}
					\norm{x-p} = \sqrt{ d_{K}^{2}{\rb{x}} + \rb{ \norm{P_{K}x} -\rho }^{2} }.
				\end{equation}
		\end{enumerate}
\end{lemma}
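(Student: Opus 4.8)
The plan is to dispatch the two parts separately, each via a short computation resting on Moreau's decomposition (\cref{fact: Moreau}) and the characterization of the conical projector (\cref{fact: pr-K}).

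For \cref{it: cpt1}, I would start from Moreau's decomposition $x = P_{K}x + P_{\pc{K}}x$ together with the elementary fact that for a nonempty closed convex set $C$ one has $P_{C}x = x \iff x \in C$. Applying this with $C = \pc{K}$, the identity $P_{K}x = 0$ is equivalent to $P_{\pc{K}}x = x$, i.e.\ to $x \in \pc{K}$. Negating both sides gives $P_{K}x \neq 0 \iff x \in \HH \smallsetminus \pc{K}$.

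For \cref{it: cpt2}, write $q \coloneqq P_{K}x$, so that $q \neq 0$ by hypothesis and $p = (\rho/\norm{q})q$. By \cref{fact: pr-K}, $q \in K$ and $x - q \perp q$; also $\norm{x-q} = d_{K}(x)$ by definition of the projector. Then expand
	\begin{equation}
		\norm{x-p}^{2} = \norm{(x-q)+(q-p)}^{2} = \norm{x-q}^{2} + 2\scal{x-q}{q-p} + \norm{q-p}^{2}.
	\end{equation}
Since $q - p = (1 - \rho/\norm{q})\,q$ is a scalar multiple of $q$, the orthogonality $x - q \perp q$ makes the cross term vanish; moreover $\norm{q-p} = |1 - \rho/\norm{q}|\,\norm{q} = |\,\norm{q}-\rho\,|$. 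Substituting yields $\norm{x-p}^{2} = d_{K}^{2}(x) + (\norm{q}-\rho)^{2}$, and taking square roots (all quantities being nonnegative, the absolute value disappears under squaring) gives \cref{eq: cpt-dst}.

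I do not anticipate a genuine obstacle here: the only point requiring care is that the cross term $\scal{x-q}{q-p}$ is exactly $(1-\rho/\norm{q})\scal{x-P_{K}x}{P_{K}x}$, which is $0$ by the orthogonality clause of \cref{fact: pr-K}, and that one may drop the absolute value since it occurs inside a square.
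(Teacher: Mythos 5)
Your proposal is correct and follows essentially the same route as the paper: part \cref{it: cpt1} via Moreau's decomposition combined with the characterization $P_{\pc{K}}x = x \iff x \in \pc{K}$, and part \cref{it: cpt2} via the decomposition $x-p = (x-P_{K}x) + (1-\rho/\norm{P_{K}x})P_{K}x$ and the orthogonality $x - P_{K}x \perp P_{K}x$ from \cref{fact: pr-K}. No gaps.
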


	\begin{proof}
		\cref{it: cpt1}: We deduce from \cref{fact: Moreau} that 
			\begin{math}
				x \in \pc {K} \Leftrightarrow x - P_{\pc{K}}x =0 \Leftrightarrow P_{K}x  =0,
			\end{math}
		and the claim follows.

		\cref{it: cpt2}: Set $\beta \coloneqq \rho / \norm{P_{K}x}$. Then, because $x-P_{K}x \perp P_{K}x$ by  \cref{fact: pr-K},  the Pythagorean identity implies that  
		\begin{subequations} 
			\begin{align}
				\norm{x-p}^{2} 
					& = \norm{x- \beta P_{K}x }^{2} \\
					& = \norm{ \left(x-P_{K}x \right) + \left(1-\beta\right)P_{K}x }^{2} \\
					& = \norm{x-P_{K}x}^{2}  + \left(1-\beta\right)^{2} \norm{P_{K}x}^{2}\\
					& = d_{K}^{2}{\left(x\right)} + \left(1-\rho / \norm{P_{K}x}\right)^{2}\norm{P_{K}x}^{2} \\
					& = d_{K}^{2}{\left(x\right)} + \left( \norm{P_{K}x} - \rho \right)^{2},
			\end{align}
		\end{subequations}
		and thus \cref{eq: cpt-dst} holds.
	\end{proof}

We now turn to projectors onto subsets of spheres. 

\begin{lemma}\label{lem: pil1}
	Let $C$ be a nonempty subset of $\HH$ consisting of
	vectors of equal norm, let $x \in \HH$, and let $p \in
	C$. 
	Then the following hold\footnote{The characterization
	in item \cref{it:p11} plays also a role in 
	\cite[Corollaries 2 and 3]{ferreira2013projections}.}: 
		\begin{enumerate}
			\item\label{it:p11} $p \in P_{C}x \Leftrightarrow \scal{x}{p} = \max \scal{x}{C}$.
			\item\label{it:p12} $P_{C}x \neq \varnothing$ if and only if $\scal{x}{ \cdot \,} $ achieves its supremum over $C$.
		\end{enumerate}
\end{lemma}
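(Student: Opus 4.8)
The plan is to reduce everything to the elementary identity that, for vectors of equal norm, minimizing the distance to $x$ is the same as maximizing the inner product with $x$. Concretely, since $C$ is nonempty and consists of vectors of equal norm, I would first set $\rho \coloneqq \norm{y}$ for any (hence every) $y \in C$, so that for all $y \in C$,
\begin{equation*}
\norm{x-y}^{2} = \norm{x}^{2} - 2\scal{x}{y} + \norm{y}^{2} = \norm{x}^{2} + \rho^{2} - 2\scal{x}{y}.
\end{equation*}
Taking the infimum over $y \in C$ on both sides, the affine decreasing dependence on $\scal{x}{y}$ gives $d_{C}^{2}(x) = \norm{x}^{2} + \rho^{2} - 2\sup\scal{x}{C}$.

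For item \cref{it:p11}, I would combine the two displayed identities: for $p \in C$, one has $p \in P_{C}x \iff \norm{x-p} = d_{C}(x) \iff \norm{x-p}^{2} = d_{C}^{2}(x)$, and substituting the two formulas above this is equivalent to $\scal{x}{p} = \sup\scal{x}{C}$. Since $p \in C$, this supremum is attained at $p$, hence it is a maximum; conversely $\scal{x}{p} = \max\scal{x}{C}$ in particular means $\scal{x}{p} = \sup\scal{x}{C}$. So the two conditions coincide, which is the claimed equivalence.

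For item \cref{it:p12}, I would simply quantify item \cref{it:p11} over $p$: $P_{C}x \neq \varnothing$ means there exists $p \in C$ with $p \in P_{C}x$, which by \cref{it:p11} means there exists $p \in C$ with $\scal{x}{p} = \sup\scal{x}{C}$, i.e., $\scal{x}{\cdot}$ attains its supremum over $C$; the reverse implication is immediate from the same equivalence.

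There is no real obstacle here — the only point requiring a little care is that the passage from $\inf_{y}\norm{x-y}^{2}$ to $\norm{x}^{2} + \rho^{2} - 2\sup_{y}\scal{x}{y}$ is legitimate precisely because $\norm{x}^{2} + \rho^{2}$ is a constant independent of $y$ and $t \mapsto \norm{x}^{2} + \rho^{2} - 2t$ is a decreasing bijection of $\RR$, so infima turn into suprema cleanly; and one should keep in mind that the supremum $\sup\scal{x}{C}$ may fail to be finite or attained, which is exactly the content of item \cref{it:p12}. I do not anticipate needing anything beyond the Cauchy–Schwarz-free algebra above.
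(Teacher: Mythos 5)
Your proof is correct and follows essentially the same route as the paper's: both rest on expanding $\norm{x-y}^{2}=\norm{x}^{2}+\rho^{2}-2\scal{x}{y}$ for equal-norm $y$, so that minimizing distance is equivalent to maximizing the inner product; the paper organizes this as a pointwise chain of equivalences while you first compute $d_{C}^{2}(x)$ explicitly, which is only a cosmetic difference. One small remark: the supremum $\sup\scal{x}{C}$ is automatically finite here since $C$ is bounded (Cauchy--Schwarz gives $\scal{x}{y}\leq\rho\norm{x}$), so only attainment, not finiteness, is at stake in \cref{it:p12}.
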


	\begin{proof}
		\cref{it:p11}: Indeed, since $p \in C$ and $\rb{\forall y \in C}~\norm{y} = \norm{p}$ by our assumption, we see that 
			\begin{subequations}
				\begin{align}
					p \in P_{C}x 
						&\iff \rb{\forall y \in C}~ \norm{x-p}^{2} \leq \norm{x-y}^{2} \\
						&\iff \rb{\forall y \in C}~ -2\scal{x}{p} \leq -2\scal{x}{y} \\
						&\iff \rb{\forall y \in C}~ \scal{x}{y} \leq \scal{x}{p} \\
						&\iff \scal{x}{p} = \max \scal{x}{C},
				\end{align}
			\end{subequations}
		which verifies the claim.
		
		\cref{it:p12}: This follows  from \cref{it:p11}.
	\end{proof}

The following example provides an instance in which $P_{C}x \neq \varnothing$, where $C$ and $x$ are as in \cref{lem: pil1}.

\begin{example}
	Consider the setting of \cref{lem: pil1} and suppose, in addition, that $C$ is weakly closed. Then $P_{C}x \neq \varnothing$.
\end{example}	
	
	\begin{proof}
		Since, by assumption, $C$ is bounded and since $C$ is weakly closed, we deduce that $C$ is weakly compact (see, for instance, \cite[Lemma 2.36]{bauschke2017convex}). Therefore, because $\scal{x }{\cdot\,} $ is weakly continuous, its supremum over $C$ is achieved, and the assertion therefore follows from \cref{lem: pil1}\cref{it:p12}.
	\end{proof}

%todo pillar 2										

\begin{lemma}\label{lem: pil2}
	Let $C$ be a nonempty subset of $\HH$,  let $\beta \in
	\RR_{++}$, and let $u \in \pos{C}$, say $u = \sum_{i \in
	I}\alpha_{i} x_{i}$, where $\set{\alpha_{i}}{i \in I} $
	and $\set{x_{i}}{i \in I}$ are finite subsets of $\RR_{+}$ and $C$, respectively. Suppose that $\norm{u} = \beta$ and that   $\left(\forall y \in C\right)~ \norm{y}  = \beta$. Then the following hold: 
		\begin{enumerate}
			\item\label{it: p21} $\sum_{i \in I} \alpha_{i} \geq 1$. 
			\item\label{it: p22} Let $x \in \HH$, and set $\kappa \coloneqq \sup \scal{x}{C}$. Suppose that $\kappa \in \left] -\infty ,0 \right]$ and that  $ \kappa\leq \scal{x}{u} $. Then the following hold: 
				\begin{enumerate}
					\item\label{it: p22a} $P_{C}x \neq \varnothing$ and $\scal{x}{u} = \max \scal{x}{C} = \kappa$.
					\item\label{it: p22b}   $u \in \sphere{0}{\beta}\cap \cone\rb{\conv P_{C}x}$.
					\item\label{it: p22c} Suppose that $\kappa < 0$. Then $u \in P_{C}x$. 
				\end{enumerate}
		\end{enumerate}
\end{lemma}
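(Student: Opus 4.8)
The plan is to settle \cref{it: p21} with a one-line triangle-inequality estimate, and then to extract everything in \cref{it: p22} from a single chain of inequalities that is forced to collapse into a chain of equalities.

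First I would prove \cref{it: p21}. Since $\set{\alpha_{i}}{i \in I}\subseteq\RR_{+}$ and $\norm{x_{i}}=\beta$ for all $i$, the triangle inequality gives $\beta=\norm{u}=\norm{\sum_{i\in I}\alpha_{i}x_{i}}\leq\sum_{i\in I}\alpha_{i}\norm{x_{i}}=\beta\sum_{i\in I}\alpha_{i}$, and dividing by $\beta\in\RR_{++}$ yields $\sum_{i\in I}\alpha_{i}\geq 1$.

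Next, for \cref{it: p22}, the key step is to observe the chain
\[
\kappa \;\leq\; \scal{x}{u} \;=\; \sum_{i\in I}\alpha_{i}\scal{x}{x_{i}} \;\leq\; \Big(\sum_{i\in I}\alpha_{i}\Big)\kappa \;\leq\; \kappa,
\]
in which the first inequality is the hypothesis $\kappa\leq\scal{x}{u}$, the middle one uses $\scal{x}{x_{i}}\leq\kappa$ (valid since $x_{i}\in C$) together with $\alpha_{i}\geq 0$, and the last one uses $\kappa\leq 0$ (a finite number, by hypothesis) together with \cref{it: p21}. Hence every inequality is an equality. From $\scal{x}{u}=\kappa$ and the equality in the middle I obtain $\sum_{i\in I}\alpha_{i}(\kappa-\scal{x}{x_{i}})=0$ with nonnegative summands, so $\scal{x}{x_{i}}=\kappa$ for every $i$ in $J\coloneqq\menge{i\in I}{\alpha_{i}>0}$; and the equality $(\sum_{i\in I}\alpha_{i})\kappa=\kappa$ shows that $\sum_{i\in I}\alpha_{i}=1$ whenever $\kappa<0$. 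Because $\norm{u}=\beta>0$ forces $u\neq 0$, the set $J$ is nonempty. Now \cref{it: p22a} follows: any $x_{i_{0}}$ with $i_{0}\in J$ lies in $C$ and satisfies $\scal{x}{x_{i_{0}}}=\kappa=\sup\scal{x}{C}$, so the supremum is attained, $\max\scal{x}{C}=\kappa=\scal{x}{u}$, and \cref{lem: pil1}\cref{it:p11} gives $x_{i_{0}}\in P_{C}x$, whence $P_{C}x\neq\varnothing$.

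For \cref{it: p22b} I would note that the same argument puts every $x_{i}$ with $i\in J$ into $P_{C}x$; writing $\sigma\coloneqq\sum_{i\in I}\alpha_{i}=\sum_{i\in J}\alpha_{i}>0$, we then have $u=\sigma\sum_{i\in J}(\alpha_{i}/\sigma)x_{i}$, a positive multiple of a convex combination of points of $P_{C}x$, so $u\in\cone(\conv P_{C}x)$ by \cref{eq: conv} and \cref{fact: cone}\cref{it: cone1}; since $\norm{u}=\beta$, this gives $u\in\sphere{0}{\beta}\cap\cone(\conv P_{C}x)$. Finally, for \cref{it: p22c}, the hypothesis $\kappa<0$ forces $\sigma=1$; after merging equal vectors among $\set{x_{i}}{i\in J}$ into a family $\set{y_{k}}{k\in K}$ of pairwise distinct points of $P_{C}x$ with coefficients $\gamma_{k}>0$ summing to $1$ and $\norm{y_{k}}=\beta=\norm{u}$, \cref{lem: numb}\cref{it: vt2} applies and yields $k\in K$ with $u=y_{k}\in P_{C}x$.

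I do not expect a genuine obstacle: the argument is elementary and short once the displayed chain is written down. The only steps demanding care are keeping track of which of the three inequalities become equalities under the sign hypothesis $\kappa\leq 0$ (and extracting the extra identity $\sum_{i\in I}\alpha_{i}=1$ when $\kappa<0$, which is precisely what makes \cref{it: p22c} stronger than \cref{it: p22b}), and the harmless bookkeeping of passing to pairwise distinct vectors so as to be entitled to invoke \cref{lem: numb}\cref{it: vt2}.
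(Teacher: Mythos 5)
Your proposal is correct and takes essentially the same route as the paper's proof: the identical triangle-inequality argument for \cref{it: p21}, and for \cref{it: p22} the same collapsing chain $\kappa \leq \scal{x}{u} = \sum_{i\in I}\alpha_{i}\scal{x}{x_{i}} \leq \kappa\sum_{i\in I}\alpha_{i} \leq \kappa$ combined with \cref{lem: pil1}\cref{it:p11} and \cref{lem: numb}\cref{it: vt2}. The only difference is organizational: the paper packages the collapse as a proof by contradiction of the auxiliary implication ``$x_{i}\in C\smallsetminus P_{C}x \implies \alpha_{i}=0$'' and then discards the zero-coefficient terms, whereas you read the equalities off directly and work with $J=\menge{i\in I}{\alpha_{i}>0}$; the mathematical content is the same.
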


	\begin{proof}
		\cref{it: p21}: Since, by assumption, $\left(\forall i\in I\right)~ \norm{x_{i}} =  \beta$, it follows from the triangle inequality that 	
			\begin{equation}
				 \beta = \norm{u} = \norm*{\sum_{i \in I} \alpha_{i} x_{i} } \leq \sum_{i\in I}\alpha_{i} \norm{x_{i}} = \beta \sum_{i \in I}\alpha_{i}.
			\end{equation}
		Therefore, because $\beta >0$, we obtain $\sum_{i \in I} \alpha_{i} \geq 1$.
		
		\cref{it: p22}:   Let us first establish that 
			\begin{equation}\label{eq: p2-con}
				\rb{\forall i \in I} \quad x_{i} \in C\smallsetminus P_{C}x \implies \alpha_{i} =0 
			\end{equation}
		by contradiction: assume that there exists $i_{0} \in I$ such that 
			\begin{equation}\label{eq: xio-c}
				x_{i_{0}} \in C\smallsetminus P_{C}x
 			\end{equation}
 		but that 
 			\begin{equation}\label{eq: aio-c}
	 			\alpha_{i_{0}} >0.
 			\end{equation}
 		Then, because the vectors in $C$ are of equal norm, we deduce from \cref{lem: pil1}\cref{it:p11} and \cref{eq: xio-c} that $\scal{x}{x_{i_{0}}} < \sup \scal{x}{C} = \kappa$, and so, by \cref{eq: aio-c}, $\alpha_{i_{0}} \scal{x}{x_{i_{0}}} < \alpha_{i_{0}} \kappa$. Hence, since 
 			\begin{equation}\label{eq: p22-ass}
	 			\left\{ 
		 				\begin{array}{l}
		 				\kappa \leq 0, \\
		 				\kappa \leq \scal{x}{u} , \\
	 					\rb{\forall i \in I } ~ 0\leq  \alpha_{i}   \text{~and~}   \scal{x}{x_{i}} \leq \kappa,
	 				\end{array}
	 				\right.
 			\end{equation}
 		it follows from \cref{it: p21} that 
 			\begin{subequations}
	 			\begin{align}
	 				\kappa \leq \scal{x}{u} 
	 					& = \sum_{i \in I} \alpha_{i} \scal{x}{x_{i}} \\
	 					& =\alpha_{i_{0}} \scal{x}{x_{i_{0}}} +  \sum_{i \in I \smallsetminus \{i_{0}\}}\alpha_{i} \scal{x}{x_{i}}  \\
	 					& < \alpha_{i_{0}} \kappa + \sum_{i \in I \smallsetminus \{i_{0}\}} \alpha_{i} \kappa  \\
	 					& = \kappa \sum_{i \in I} \alpha_{i} \\
	 					& \leq \kappa,
	 			\end{align}
 			\end{subequations}
 		and we thus arrive at a contradiction, namely $\kappa < \kappa$. Therefore, \cref{eq: p2-con} holds.
 		
 		\hspace{\parindent}\cref{it: p22a}: If $P_{C}x$ were empty, then \cref{eq: p2-con} would yield $\rb{\forall i\in I}~\alpha_{i}=0$ and it would follow that $u=0$ or, equivalently, $\beta =\norm{u} = 0$, which is absurd. Thus $P_{C}x \neq \varnothing$, and so \cref{lem: pil1}\cref{it:p11} implies that $\kappa =\max \scal{x}{C}$. Furthermore, we infer from \cref{eq: p22-ass} and \cref{it: p21} that 
 			\begin{equation}
	 				\kappa \leq \scal{x}{u} 
	 			 = \sum_{i \in I} \alpha_{i} \scal{x}{x_{i}} \leq \sum_{i\in I}\alpha_{i} \kappa = \kappa \sum_{i \in I}\alpha_{i} \leq \kappa,
 			\end{equation}
 		and the latter assertion follows.
 		
 		\hspace{\parindent}\cref{it: p22b}: In the remainder, since $u \neq 0$,  appealing to \cref{eq: p2-con}, we assume without loss of generality that 
 			\begin{equation}\label{eq: p2-WLOG}
	 			\rb{\forall i\in I}\quad x_{i} \in P_{C}x
 			\end{equation}
 		and that $\rb{\forall i \in I}\rb{\forall j \in I}~i\neq j \Rightarrow x_{i}\neq x_{j}$. Hence, upon setting $\alpha \coloneqq \sum_{i \in I} \alpha_{i} \geq 1$, we deduce from \cref{eq: p2-WLOG} that 
 			\begin{equation}
	 			u= \alpha \sum_{i\in I}\frac{\alpha_{i}}{\alpha} x_{i} \in \alpha \conv{P_{C}x} \subseteq \cone \rb{\conv{P_{C}x } }.
 			\end{equation}
 		Consequently, since $\norm{u} = \beta$, the claim follows.
 		
 		\hspace{\parindent}\cref{it: p22c}: Invoking \cref{lem: pil1}\cref{it:p11} and \cref{eq: p2-WLOG}, we get $\rb{\forall i\in I}~ \scal{x}{x_{i}} = \max \scal{x}{C} = \kappa$. Thus, by \cref{it: p22a}, 
 			\begin{math}
	 			\kappa = \scal{x}{u} = \sum_{i \in I} \alpha_{i} \scal{x}{x_{i}} = \kappa \sum_{i \in I}\alpha_{i},
 			\end{math}
 		and since $\kappa \neq 0$, it follows that $\sum_{i\in I}\alpha_{i} =1$. To summarize, we have 
 			\begin{equation}
	 			\left\{ 
		 			\begin{array}{l}
		 				u = \sum_{i \in I}\alpha_{i} x_{i}, \\
		 				\rb{ \forall i \in I} ~ \norm{x_{i} } = \norm{u} =\beta , \\
		 				\set{\alpha_{i}}{ i \in I } \subseteq \RR_{+} \text{~satisfying~} \sum_{i \in I}\alpha_{i} =1, \\
		 				\rb{\forall i\in I}\rb{\forall j \in I}~i\neq j \Rightarrow x_{i} \neq x_{j}.
	 				\end{array}
	 				\right.
 			\end{equation}
 		\cref{lem: numb}\cref{it: vt2} and \cref{eq: p2-WLOG} therefore imply that $\rb{\exists i\in I}~ u=x_{i} \in P_{C}x$, as desired.
	\end{proof}

The following example shows that the conclusion of \cref{lem: pil2}\cref{it: p22c} fails if the assumption that  $u \in \pos{C}$ is omitted.

\begin{example}
	Suppose that $\HH = \RR^{3}$ and that $\rb{e_{1},e_{2}, e_{3}}$ is the canonical orthonormal basis of $\HH$. Set $C\coloneqq \{e_{1},e_{2} \}$, $x\coloneqq (-1,-1,0)$, and $u\coloneqq (1/2, 1/2, \sqrt{2}/2)$. Then $u$ is not a conical combination of elements of $C$ and, as in the assumption of \cref{lem: pil2}, $(\beta ,\kappa) = (1,-1)$. Moreover, a simple computation gives $\norm{u}=1$, $\scal{x}{u}=-1 =\kappa$, and $ \norm{x-e_{1}} = \norm{x-e_{2}}=\sqrt{5}$. Hence,  $P_{C}x = C\neq \varnothing$ while $u \notin C$.
\end{example}

\section{Projectors onto sets of real symmetric matrices}\label{sect: SN}

\label{s:Sn}

In this section, $N$ is a strictly positive integer, and suppose
that $\HH = \bbS^{N}$ is the Hilbert space of real symmetric matrices endowed with the scalar product 
	\begin{math}
		\scal{}{ } \colon \rb{A,B} \mapsto \tra\rb{AB},
	\end{math}
where $\tra$ is the trace function; the associated norm is the Frobenius norm $\fnorm{}$. The closed convex cone of positive semidefinite symmetric matrix in $\HH$ is denoted by $\bbS_{+}^{N}$, and the set of orthogonal matrices of size $N \times N$ is $\bbU^{N} \coloneqq \menge{U \in \RR^{N\times N}}{ U U^{\T} = \Id }$, where $\Id$ is the identity matrix of $\RR^{N\times N}$. Next, for every $x = \rb{\xi_{1},\ldots,\xi_{N}} \in \RR^{N}$, set $x_{+}\coloneqq \rb{\max  \{\xi_{i},0\}  }_{1\leq i\leq N}$ and define $\Diag{x}$ to be  the diagonal matrix whose, starting from the upper left corner, diagonal entries are $\xi_{1},\ldots,\xi_{N}$. Now, for every $A \in \HH$, the eigenvalues of $A$ (not necessarily distinct) are denoted by $\rb{\lambda_{i}\rb{A} }_{1\leq i\leq N}$ with the convention that $\lambda_{1}\rb{A} \geq \cdots \geq \lambda_{N}\rb{A}$. In turn, the mapping $\lambda \colon \HH \to \RR^{N} : A \mapsto \rb{\lambda_{1}\rb{A},\ldots,\lambda_{N}\rb{A} }$ is well defined. Finally, the Euclidean scalar product and norm of $\RR^{N}$ are respectively denoted by $\scal{}{}$ and $\norm{}$.

\begin{remark}\label{rm: fnorm}
	Let $A \in \HH ,~ U \in \bbU^{N}, \text{~and~} x \in \RR^{N}$. Then it is straightforward to verify that 
		\begin{equation}
			\fnorm{ U A U^{\T} } = \fnorm{A } = \norm{\lambda\rb{A} }
		\end{equation}
	and that 
		\begin{equation}
			\fnorm{ U \rb{\Diag{x} } U^{\T} } = \fnorm{\Diag {x} } = \norm{x}.
		\end{equation}
\end{remark}

\begin{lemma}\label{fact: pr-psd}
	Set $K \coloneqq \bbS_{+}^{N}$. Let $A \in \HH$, and  let $U \in \bbU^{N}$ be such that $A = U \rb{\Diag \lambda\rb{A} } U^{\T}$. Then $P_{K}A =  U \rb{\Diag \rb{\lambda\rb{A}}_{+} } U^{\T} $ and $\fnorm{P_{K}A } = \norm{\rb{\lambda\rb{A}}_{+} }$.
\end{lemma}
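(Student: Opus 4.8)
The plan is to verify the characterization of $P_K A$ given in \cref{fact: pr-K}, namely that for $p \coloneqq U(\Diag(\lambda(A))_+)U^{\T}$ one has $p \in K$, $A - p \perp p$, and $A - p \in K^{\ominus}$. First I would record that $K = \bbS_+^N$ is a nonempty closed convex cone (so \cref{fact: pr-K} applies) and that its polar cone is $K^{\ominus} = -\bbS_+^N$, i.e.\ the set of negative semidefinite matrices; this is standard but I would either cite it or note it follows from the spectral theorem since $\scal{A}{B} = \tra(AB)$ and a symmetric matrix $B$ satisfies $\tra(AB)\leq 0$ for all $A\succeq 0$ iff $B\preceq 0$.

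Next I would set $x \coloneqq \lambda(A) = (\xi_1,\ldots,\xi_N)$ so that $A = U(\Diag x)U^{\T}$, and write $x_+ = (\max\{\xi_i,0\})_i$ and $x_- \coloneqq x - x_+ = (\min\{\xi_i,0\})_i$. Then $p = U(\Diag x_+)U^{\T}$ and $A - p = U(\Diag x_-)U^{\T}$. Since $x_+$ has nonnegative entries, $\Diag x_+ \succeq 0$, and conjugating by the orthogonal matrix $U$ preserves positive semidefiniteness, so $p \in K$. Likewise $x_-$ has nonpositive entries, so $\Diag x_- \preceq 0$ and hence $A - p = U(\Diag x_-)U^{\T} \in -\bbS_+^N = K^{\ominus}$. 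For the orthogonality $A - p \perp p$, compute $\scal{A-p}{p} = \tra\big(U(\Diag x_-)U^{\T} U(\Diag x_+)U^{\T}\big) = \tra\big((\Diag x_-)(\Diag x_+)\big) = \sum_{i=1}^N (x_-)_i (x_+)_i = 0$, since for each $i$ exactly one of $(x_+)_i$, $(x_-)_i$ is zero. By \cref{fact: pr-K}, this proves $p = P_K A$.

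Finally, the norm identity $\fnorm{P_K A} = \norm{(\lambda(A))_+}$ is immediate from \cref{rm: fnorm} applied with the vector $x_+ = (\lambda(A))_+$: indeed $\fnorm{U(\Diag x_+)U^{\T}} = \norm{x_+}$.

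I do not anticipate a genuine obstacle here; the only point needing a little care is the identification of the polar cone $(\bbS_+^N)^{\ominus}$ with $-\bbS_+^N$, and making sure the definition of $K^{\ominus}$ in the paper (via $\sup\scal{u}{C}\leq 0$) is correctly matched. Everything else is a direct application of the spectral decomposition together with \cref{fact: pr-K} and \cref{rm: fnorm}.
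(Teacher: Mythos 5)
Your argument is correct, but it takes a genuinely different route from the paper: the paper does not prove the projection formula at all, it simply cites it as well known (Lewis's Theorem~A1 in \cite{lewis2008alternating}, or \cite[Example~29.32]{bauschke2017convex}) and only supplies the one-line deduction of $\fnorm{P_{K}A}=\norm{\rb{\lambda\rb{A}}_{+}}$ from \cref{rm: fnorm}. You instead give a self-contained verification via the characterization in \cref{fact: pr-K}, checking $p\in K$, $A-p\perp p$, and $A-p\in\pc{K}$ for $p=U\rb{\Diag\rb{\lambda\rb{A}}_{+}}U^{\T}$; the splitting $x=x_{+}+x_{-}$ with $(x_{+})_{i}(x_{-})_{i}=0$ and the trace computation $\scal{A-p}{p}=\sum_{i}(x_{-})_{i}(x_{+})_{i}=0$ are all in order. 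What your approach buys is independence from the cited references; what it costs is the need for the self-duality $\pc{\rb{\bbS_{+}^{N}}}=-\bbS_{+}^{N}$, which the paper nowhere establishes and which you rightly flag as the one point requiring a citation or a short spectral-theorem argument (e.g., test against rank-one matrices $vv^{\T}$ to get $v^{\T}Bv\leq 0$ for all $v$). With that ingredient supplied, your proof is essentially the standard proof of the fact the paper invokes, and the concluding norm identity is handled exactly as in the paper.
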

	
	\begin{proof}
It is well known 
that  $P_{K}A =  U \rb{\Diag \rb{\lambda\rb{A}}_{+} } U^{\T}$
(see, e.g., \cite[Theorem~A1]{lewis2008alternating}  
or \cite[Example~29.32]{bauschke2017convex}). 
In turn, since $U \in \bbU^{N}$, it follows from \cref{rm: fnorm}
that $\fnorm{P_{K}A } = \norm{\rb{\lambda\rb{A}}_{+} }$.
	\end{proof}

\begin{fact}[Theobald] {\rm (See \cite{theobald1975inequality}.)} \label{fact: Theo}
	 Let $A$ and $B$ be in $\HH$. Then the following hold: 
	\begin{enumerate}
		\item\label{it: Theo1} $\scal{A}{B}\leq \scal{\lambda\rb{A} }{  \lambda\rb{B}}.$
		\item\label{it: Theo2} $\scal{A}{B} =  \scal{\lambda\rb{A} }{  \lambda\rb{B}}$ if and only if there exists $U \in \bbU^{N}$ such that $A = U \rb{\Diag \lambda\rb{A}  } U^{\T}$ and $B = U \rb{\Diag \lambda\rb{B}  } U^{\T}$.
	\end{enumerate}
\end{fact}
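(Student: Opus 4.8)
\emph{Overview.} The plan is to prove part (i) by rewriting the trace inner product as a linear functional over the Birkhoff polytope and invoking Birkhoff--von Neumann together with the rearrangement inequality (this is the symmetric case of von Neumann's trace inequality), and then to deduce part (ii) from a first-order optimality argument: equality forces $A$ and $B$ to commute, after which one re-sorts a common eigenbasis.

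\emph{Part (i).} By the spectral theorem, write $A = U(\Diag \lambda(A))U^{\T}$ and $B = V(\Diag\lambda(B))V^{\T}$ with $U,V\in\bbU^{N}$, and set $W\coloneqq U^{\T}V\in\bbU^{N}$, $a\coloneqq\lambda(A)$, $b\coloneqq\lambda(B)$. Using the cyclic property of the trace,
\[
\scal{A}{B}=\tra\!\big((\Diag a)\,W\,(\Diag b)\,W^{\T}\big)=\sum_{i=1}^{N}\sum_{j=1}^{N}a_{i}b_{j}W_{ij}^{2}.
\]
Since $W\in\bbU^{N}$, the matrix $S\coloneqq(W_{ij}^{2})_{1\le i,j\le N}$ has all row and column sums equal to $1$, i.e.\ it is doubly stochastic. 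The map $S\mapsto\sum_{i,j}a_{i}b_{j}S_{ij}$ is linear, hence attains its maximum over the compact convex set of doubly stochastic matrices at an extreme point, which by the Birkhoff--von Neumann theorem is a permutation matrix; and for any permutation $\pi$, the rearrangement inequality (recall $a$ and $b$ are nonincreasing) gives $\sum_{i}a_{i}b_{\pi(i)}\le\sum_{i}a_{i}b_{i}=\scal{a}{b}$. Chaining these bounds yields $\scal{A}{B}\le\scal{\lambda(A)}{\lambda(B)}$.

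\emph{Part (ii), ``$\Leftarrow$'' and the start of ``$\Rightarrow$''.} If $A=U(\Diag\lambda(A))U^{\T}$ and $B=U(\Diag\lambda(B))U^{\T}$ for a common $U\in\bbU^{N}$, then $\scal{A}{B}=\tra\!\big((\Diag\lambda(A))(\Diag\lambda(B))\big)=\scal{\lambda(A)}{\lambda(B)}$. Conversely, assume equality holds. Choosing $U\in\bbU^{N}$ with $U^{\T}AU=\Diag a$, $a\coloneqq\lambda(A)$, and replacing $(A,B)$ by $(U^{\T}AU,U^{\T}BU)$ (which changes neither $\scal{A}{B}$ nor the two spectra, and transforms a common diagonalizer back correctly), we may assume $A=\Diag a$. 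The hypothesis then says $B$ maximizes the linear functional $C\mapsto\scal{\Diag a}{C}$ over the compact set $\{V(\Diag\lambda(B))V^{\T}:V\in\bbU^{N}\}$ of all symmetric matrices cospectral with $B$ (that the maximum equals $\scal{a}{\lambda(B)}$ is part (i)). Differentiating $t\mapsto\scal{\Diag a}{e^{tX}Be^{-tX}}$ at $t=0$ along an arbitrary antisymmetric matrix $X$ gives $\tra\!\big(X[\Diag a,B]\big)=0$; since $[\Diag a,B]$ is antisymmetric and $Z\mapsto\tra(Z^{2})$ is negative definite on antisymmetric matrices, $[\Diag a,B]=0$. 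Hence $B$ is block diagonal with respect to the partition of $\{1,\dots,N\}$ into the level sets of the entries of $a$.

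\emph{Part (ii), finishing ``$\Rightarrow$''.} On each such block $\Diag a$ acts as a scalar, so a block-diagonal $O\in\bbU^{N}$ diagonalizing $B$ within each block satisfies $O^{\T}(\Diag a)O=\Diag a$ and $O^{\T}BO=D$ for some diagonal $D$ whose diagonal is a permutation of the entries of $\lambda(B)$. Then $\scal{A}{B}=\tra\!\big((\Diag a)D\big)=\sum_{i}a_{i}D_{ii}$, and by hypothesis this equals $\scal{a}{\lambda(B)}$, the largest value of $\sum_{i}a_{i}c_{i}$ over all permutations $c$ of $\lambda(B)$; the equality case of the rearrangement inequality then yields a permutation matrix $\Pi$, permuting only within the level sets of $a$, with $\Pi^{\T}D\Pi=\Diag\lambda(B)$ and $\Pi^{\T}(\Diag a)\Pi=\Diag a$. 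Setting $\widetilde U\coloneqq O\Pi\in\bbU^{N}$ (and undoing the initial conjugation), one checks $A=\widetilde U(\Diag\lambda(A))\widetilde U^{\T}$ and $B=\widetilde U(\Diag\lambda(B))\widetilde U^{\T}$, as required. The main obstacle is precisely this last step: the variational argument only delivers $[A,B]=0$, and one must then use the exact value of $\scal{A}{B}$ — not merely commutation — to order the common eigenbasis so that \emph{both} spectra come out in the nonincreasing order fixed by $\lambda(\cdot)$; the bookkeeping with repeated eigenvalues is where care is needed.
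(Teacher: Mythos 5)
The paper offers no proof of this Fact at all---it is stated as a known result with the bare citation ``See \cite{theobald1975inequality}''---so your argument cannot be compared against an in-paper proof; what you have written is a self-contained derivation, and it is essentially correct. Your part (i) is the classical route: reduce $\scal{A}{B}$ to $\sum_{i,j}a_ib_jW_{ij}^2$ with $W=U^{\T}V$, observe that $(W_{ij}^2)$ is doubly stochastic, and combine Birkhoff--von Neumann with the rearrangement inequality; this is in the spirit of Theobald's original argument and of von Neumann's trace inequality. Your part (ii) is the more interesting contribution: the forward implication via differentiating $t\mapsto\scal{\Diag a}{e^{tX}Be^{-tX}}$ along antisymmetric directions to force $[\Diag a,B]=0$, followed by a re-sorting of a common eigenbasis, is a clean Lie-theoretic take on the equality case. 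The only step that is asserted rather than proved is the final one: from $\sum_i a_iD_{ii}=\sum_i a_i\lambda_i(B)$ you conclude that a \emph{block-preserving} permutation exists. The textbook equality case of the rearrangement inequality (stated for strictly ordered sequences) does not give this directly when $a$ has repeated entries; one needs the Abel-summation identity $\sum_i a_i(c_i-d_i)=\sum_{k=1}^{N-1}(a_k-a_{k+1})\sum_{i\leq k}(c_i-d_i)$, whose terms are all nonnegative, so that equality forces $\{d_1,\dots,d_k\}=\{c_1,\dots,c_k\}$ as multisets at every index $k$ where $a_k>a_{k+1}$; this is exactly what produces a permutation acting within the level sets of $a$. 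You flag this as the delicate point but do not supply the argument; adding those two lines would close the proof completely.
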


\begin{lemma}\label{lem: SN}
	Let $\rho \in \RR_{++}$, and set 	
	\begin{equation}\label{eq: Crho-dfn}
	\sC_{\rho} \coloneqq \menge*{ A \in \bbS_{+}^{N} }{\rank{A}=1 \text{~and~} \fnorm{A}=\rho }.
	\end{equation}
	Then the following hold: 
	\begin{enumerate}
		\item\label{it: psd-gen} $\bbS_{+}^{N} = \pos{\sC_{\rho}}$.
		\item\label{it: Crho} \begin{math} \sC_{\rho} = \menge{  A \in \HH }{ \rb{\exists U \in \bbU^{N} }~A = U\rb{\Diag\rb*{\rho,0,\ldots,0}} U^{\T}  }.\end{math} 
		\item\label{it: pr-Crho} Let $A \in \HH$. Then $\max \scal{A}{\sC_{\rho}} = \rho \lambda_{1}\rb{A}$ and 
			\begin{equation}\label{eq: pr-Crho}
				P_{\sC_{\rho}}A  = \menge*{ U \rb{\Diag\rb*{\rho,0,\ldots,0}} U^{\T} }{ U \in \bbU^{N} \text{~such that~} A = U \rb{\Diag \lambda\rb{A} } U^{\T} } \neq \varnothing.
			\end{equation}
	\end{enumerate}
\end{lemma}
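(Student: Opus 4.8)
The plan is to take the three items in sequence, each reducing to machinery already in hand; throughout I will invoke the spectral theorem and \cref{rm: fnorm} freely. Begin with \cref{it: Crho}. If $A = U(\Diag(\rho,0,\ldots,0))U^{\T}$ with $U \in \bbU^{N}$, then $A$ is orthogonally similar to a diagonal matrix with nonnegative entries exactly one of which is nonzero, so $A \in \bbS_{+}^{N}$ with $\rank A = 1$, while $\fnorm{A} = \fnorm{\Diag(\rho,0,\ldots,0)} = \rho$ by \cref{rm: fnorm}; hence the right-hand set in \cref{it: Crho} is contained in $\sC_{\rho}$. Conversely, if $A \in \sC_{\rho}$, then $A$ being symmetric, positive semidefinite, and of rank one forces $\lambda(A) = (\lambda_{1}(A),0,\ldots,0)$ with $\lambda_{1}(A) > 0$, and the spectral theorem yields $U \in \bbU^{N}$ with $A = U(\Diag\lambda(A))U^{\T}$; since $\rho = \fnorm{A} = \norm{\lambda(A)} = \lambda_{1}(A)$, we get $A = U(\Diag(\rho,0,\ldots,0))U^{\T}$. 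In particular $\sC_{\rho} \neq \varnothing$.

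For \cref{it: psd-gen}, the inclusion $\pos{\sC_{\rho}} \subseteq \bbS_{+}^{N}$ is immediate from \cref{lem: cone}\cref{it: c2}, since $\bbS_{+}^{N}$ is a convex cone containing $\sC_{\rho} \cup \{0\}$. For the reverse inclusion, take $A \in \bbS_{+}^{N}$ and write $A = U(\Diag\lambda(A))U^{\T} = \sum_{i=1}^{N}\lambda_{i}(A)\,U(\Diag(e_{i}))U^{\T}$, where $e_{i}$ is the $i$-th standard unit vector of $\RR^{N}$; each $\rho\,U(\Diag(e_{i}))U^{\T}$ is symmetric, positive semidefinite, of rank one, and of Frobenius norm $\rho$ by \cref{rm: fnorm}, hence lies in $\sC_{\rho}$, and since $\lambda_{i}(A) \geq 0$ the identity $A = \sum_{i=1}^{N}(\lambda_{i}(A)/\rho)\bigl(\rho\,U(\Diag(e_{i}))U^{\T}\bigr)$ exhibits $A \in \pos{\sC_{\rho}}$.

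For \cref{it: pr-Crho}, I first pin down $\max\scal{A}{\sC_{\rho}} = \rho\lambda_{1}(A)$. For any $B \in \sC_{\rho}$ we have $\lambda(B) = (\rho,0,\ldots,0)$ by \cref{it: Crho}, so Theobald's inequality \cref{fact: Theo}\cref{it: Theo1} gives $\scal{A}{B} \leq \scal{\lambda(A)}{\lambda(B)} = \rho\lambda_{1}(A)$; and whenever $A = U_{0}(\Diag\lambda(A))U_{0}^{\T}$, the matrix $B_{0} \coloneqq U_{0}(\Diag(\rho,0,\ldots,0))U_{0}^{\T}$ lies in $\sC_{\rho}$ and satisfies $\scal{A}{B_{0}} = \tra(A B_{0}) = \tra(\Diag\lambda(A)\,\Diag(\rho,0,\ldots,0)) = \rho\lambda_{1}(A)$ by cyclicity of the trace and $U_{0}^{\T}U_{0} = \Id$. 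Since every element of $\sC_{\rho}$ has norm $\rho$, \cref{lem: pil1}\cref{it:p11} identifies $P_{\sC_{\rho}}A$ with $\menge{B \in \sC_{\rho}}{\scal{A}{B} = \rho\lambda_{1}(A)}$, which is nonempty by the previous sentence. Matching this with the right-hand side of \cref{eq: pr-Crho}: the inclusion $\supseteq$ is exactly the computation of $\scal{A}{B_{0}}$ just performed; for $\subseteq$, given $B \in \sC_{\rho}$ with $\scal{A}{B} = \rho\lambda_{1}(A) = \scal{\lambda(A)}{\lambda(B)}$, the equality case of Theobald's inequality, \cref{fact: Theo}\cref{it: Theo2}, supplies $U \in \bbU^{N}$ with $A = U(\Diag\lambda(A))U^{\T}$ and $B = U(\Diag\lambda(B))U^{\T} = U(\Diag(\rho,0,\ldots,0))U^{\T}$, putting $B$ in the right-hand set of \cref{eq: pr-Crho}. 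The only step calling for real care is this last item: one must use \emph{both} halves of \cref{fact: Theo} — the inequality to evaluate the maximal inner product, and the equality characterization \cref{it: Theo2} to show that \emph{every} maximizer comes from a simultaneous orthogonal diagonalization of $A$, which is precisely what produces the parametrization in \cref{eq: pr-Crho}; items \cref{it: Crho} and \cref{it: psd-gen} are routine consequences of the spectral theorem and \cref{rm: fnorm}.
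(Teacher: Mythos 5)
Your proposal is correct and follows essentially the same route as the paper's proof: the same containment via \cref{lem: cone}\cref{it: c2} and the spectral decomposition into rank-one pieces for \cref{it: psd-gen}, the same eigenvalue characterization for \cref{it: Crho}, and the same combination of Theobald's inequality, its equality case, and \cref{lem: pil1}\cref{it:p11} for \cref{it: pr-Crho}. The only cosmetic difference is that you verify attainment at $B_{0}$ by a direct trace computation, where the paper extracts it from the chain of equivalences built on \cref{fact: Theo}\cref{it: Theo2}.
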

	
	\begin{proof}
		\cref{it: psd-gen}: Set $I \coloneqq \{1,\ldots,N\}$, and let $\fa{e_{i}}{ i\in I}$ be the canonical orthonormal basis of $\RR^{N}$. First, since $\sC_{\rho}\cup \{0\} \subseteq \bbS_{+}^{N}$ and $\bbS_{+}^{N}$ is a convex cone, we infer from \cref{lem: cone}\cref{it: c2} that  
			\begin{math}
				  \pos{\sC_{\rho}}\subseteq \bbS_{+}^{N}.
			\end{math}		
		Conversely, take $A \in \bbS_{+}^{N}$, and let $U \in \bbU^{N}$ be such that  $A = U\rb{ \Diag\lambda\rb{A} } U^{\T}$; in addition, set 	
			\begin{math}
				\rb{\forall i \in I}~ D_{i} \coloneqq \Diag \rb{ \rho e_{i} }\in \bbS_{+}^{N}.
			\end{math}
		Then, for every $i \in I$,  since  \begin{math}
			 \rank D_{i} =1 
		\end{math}
		and $\fnorm{U D_{i} U^{\T} } = \fnorm{D_{i}} = \norm{\rho e_{i}} =\rho$, we get from \cref{eq: Crho-dfn} that  $U D_{i}U^{\T} \in   \sC_{\rho} $. In turn, because $\{\lambda_{i}\rb{A} \}_{i \in I} \subseteq \RR_{+}$ and 
			\begin{equation}
				A = U\rb{ \Diag \lambda\rb{A} } U^{\T} = U \rb*{ \sum_{i \in I} \Diag \rb{\lambda_{i}\rb{A} e_{i} }  } U^{\T} = U \rb*{ \sum_{i \in I} \frac{\lambda_{i}\rb{A}}{\rho} D_{i}  } U^{\T}  = \sum_{i \in I} \frac{\lambda_{i}\rb{A} }{\rho} \rb{ U D_{i} U^{\T} },
			\end{equation}
		we deduce that $A \in \pos{\sC_{\rho}}$. Hence, $\bbS_{+}^{N} = \pos{\sC_{\rho}}$.

		\cref{it: Crho}: 
			Recall that, if $A $ is a matrix of rank $r$ in $\bbS_{+}^{N}$, then 
			\begin{equation}\label{eq: ei-Rankr}
			\lambda_{1}\rb{A} \geq \cdots \geq \lambda_{r}\rb{A} > \lambda_{r+1}\rb{A}=\cdots \lambda_{N}\rb{A} = 0.
			\end{equation}
		Now set $\sD \coloneqq \menge{  A \in \HH }{ \rb{\exists U \in \bbU^{N} }~A = U\rb{\Diag\rb*{\rho,0,\ldots,0}} U^{\T}  }$. First, take $A \in \sC_{\rho}$, and let $U \in \bbU^{N} $ be such that $A = U \rb{\Diag\lambda\rb{A} } U^{\T}$. Then, since $\rank{A}=1$ and $A \in \bbS_{+}^{N}$, it follows from \cref{eq: ei-Rankr} that $\lambda\rb{A} = \rb{\lambda_{1}\rb{A} , 0,\ldots, 0}$ and $\lambda_{1}\rb{A} >0$; therefore, because $\fnorm{A}=\rho$, we obtain 
			\begin{math}
				\rho = \fnorm{A}  = \norm{\lambda\rb{A}} = \lambda_{1}\rb{A}.
			\end{math}
		Hence, $A = U \rb{\Diag \rb{\lambda_{1}\rb{A} , 0,\ldots, 0} } U^{\T}  = U \rb{ \Diag\rb{\rho,0,\ldots, 0} } U^{\T}$, which yields $A \in \sD$. Conversely, take $B \in \sD$, say $B = V\rb{\Diag\rb{\rho, 0 ,\ldots, 0}} V^{\T}$, where $V \in \bbU^{N}$. Then, since $\rho >0$, we have $B \in \bbS_{+}^{N}$. Next, on the one hand, because $V$ is nonsingular and $\rho \neq 0$, we have $\rank{B} = \rank \Diag\rb{\rho, 0 ,\ldots, 0}  =1$. On the other hand, since $V \in \bbU^{N}$, it follows that $\fnorm{B} = \fnorm{ V\rb{\Diag\rb{\rho, 0 ,\ldots, 0}} V^{\T} } = \norm{\rb{\rho, 0 ,\ldots, 0} } = \rho$. Altogether, $B \in \sC_{\rho}$, which completes the proof.
		
		\cref{it: pr-Crho}: First, it follows from \cref{it: Crho} that  
			\begin{equation}\label{eq: Crho-ei}
				\rb{\forall B \in \HH} \quad  B \in \sC_{\rho} \iff \lambda\rb{B} = \rb{\rho,0,\ldots,0}.
			\end{equation}
		Next, denote  the right-hand set of \cref{eq: pr-Crho} by $\sD$. Then, by \cref{it: Crho}, $\varnothing \neq \sD \subseteq \sC_{\rho}$. Now, 
		for every $B \in \sC_{\rho}$, since $\lambda\rb{B} = \rb{\rho,0,\ldots, 0}$, we infer from \cref{fact: Theo}\cref{it: Theo1} that $\scal{A}{B} \leq \scal{\lambda\rb{A}}{\lambda\rb{B} } = \rho \lambda_{1}\rb{A}$. Thus, $\sup\scal{A}{\sC_{\rho}} \leq \rho \lambda_{1}\rb{A}$. Furthermore, by   \cref{eq: Crho-ei}, \cref{fact: Theo}\cref{it: Theo2}, and the very definition of $\sD$, we see that
		\begin{subequations}  
			\begin{align}
				\rb{\forall B \in \sC_{\rho}} \quad 
				\scal{A}{B} = \rho \lambda_{1}\rb{A} 
				& \iff \scal{A}{B}  = \scal{\lambda\rb{A}}{\lambda\rb{B}} \\
				& \iff  \rb{\exists U \in \bbU^{N} } ~ \left\{ \begin{array}{l}
				A = U\rb{ \Diag\lambda\rb{A} } U^{\T}, \\
				B = U \rb{ \Diag\lambda\rb{B} } U^{\T}
				\end{array} \right.   \\
				& \iff  \rb{\exists U \in \bbU^{N} } ~ \left\{ \begin{array}{l}
				A = U\rb{ \Diag\lambda\rb{A} } U^{\T}, \\
				B = U \rb{ \Diag\rb{\rho,0,\ldots,0} } U^{\T}
				\end{array} \right. \\
				& \iff B \in \sD.
			\end{align} 
		\end{subequations}
	Therefore, because $\sD\neq \varnothing$, we deduce that $\max \scal{A}{\sC_{\rho}} = \rho \lambda_{1}\rb{A}$ and \begin{math}
		\rb{\forall B \in \sC_{\rho}}~\scal{A}{B} = \max \scal{A}{\sC_{\rho}} \Leftrightarrow B \in \sD.
	\end{math} Consequently, since the matrices in $\sC_{\rho}$ are of equal norm by \cref{eq: Crho-dfn}, we derive from \cref{lem: pil1}\cref{it:p11} that $P_{\sC_{\rho}} A = \sD$, as desired.
	\end{proof}

\section{Projectors onto cones generated by orthonormal sets}
\label{sect: spe-cone}

\label{s:fg}

We start with a conical version 
of \cite[Example~3.10]{bauschke2017convex}.

\begin{theorem}\label{thm: pr-K-ei}
	Let  $\set{e_{i}}{i \in I}$ be a nonempty finite orthonormal subset of $\HH$, set 
		\begin{equation}\label{eq: K-ei}
			K \coloneqq \sum_{i \in I}\RR_{+} e_{i},
		\end{equation}
	and let $x \in \HH$. Then $K$ is a nonempty closed convex cone in $\HH$, 
		\begin{equation}\label{eq: pr-ei}
			P_{K}x = \sum_{i \in I} \max\left\{\scal{x}{e_{i}},0 \right\} e_{i}, \quad \text{and} \quad d_{K}\rb{x} = \sqrt{\norm{x}^{2} - \sum_{i\in I} \rb{\max \left\{\scal{x}{e_{i}} ,0 \right\} }^{2}  }.
		\end{equation} 
\end{theorem}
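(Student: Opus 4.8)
The plan is to reduce everything to the cone-projection characterization in \cref{fact: pr-K}. For the assertion that $K$ is a nonempty closed convex cone, I would simply invoke \cref{eg: fi-cone}: since $\set{e_i}{i \in I}$ is finite, that example applies directly to $K = \sum_{i \in I}\RR_+ e_i$ and identifies it as the smallest closed convex cone containing $\set{e_i}{i \in I}\cup\{0\}$; in particular $K$ is a nonempty closed convex cone, so \cref{fact: pr-K} becomes available. I would also record, via \cref{eq: Minsum}, that $K = \pos\set{e_i}{i \in I}$, which lets me describe $\pc K$ through \cref{lem: cone}\cref{it: c01}.

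Next I would set $p \coloneqq \sum_{i \in I}\max\{\scal{x}{e_i},0\}e_i$ and verify the three conditions of \cref{fact: pr-K}: $p \in K$, $x - p \perp p$, and $x - p \in \pc K$. The first is immediate because each coefficient $\max\{\scal{x}{e_i},0\}$ is nonnegative. The other two reduce to two elementary scalar identities, valid for every $t \in \RR$: $t\,\max\{t,0\} = (\max\{t,0\})^2$ and $t - \max\{t,0\} = \min\{t,0\}\leq 0$. Combining the first with the orthonormality of $\set{e_i}{i \in I}$ gives $\scal{x}{p} = \sum_{i \in I}\scal{x}{e_i}\max\{\scal{x}{e_i},0\} = \sum_{i \in I}(\max\{\scal{x}{e_i},0\})^2 = \norm{p}^2$, whence $\scal{x-p}{p} = 0$. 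For the polarity condition, \cref{lem: cone}\cref{it: c01} (with $C = \set{e_i}{i \in I}$) reduces $x - p \in \pc K$ to checking $\scal{x-p}{e_j}\leq 0$ for each $j \in I$, and orthonormality together with the second scalar identity gives $\scal{x-p}{e_j} = \scal{x}{e_j} - \max\{\scal{x}{e_j},0\} = \min\{\scal{x}{e_j},0\}\leq 0$. Then \cref{fact: pr-K} yields $P_{K}x = p$, which is the first formula in \cref{eq: pr-ei}.

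Finally, the distance formula would follow from the orthogonality just obtained: since $x - P_{K}x = x - p \perp p = P_{K}x$, the Pythagorean identity gives $\norm{x}^2 = \norm{x-p}^2 + \norm{p}^2$, so $d_{K}^{2}\rb{x} = \norm{x-P_{K}x}^2 = \norm{x}^2 - \norm{p}^2 = \norm{x}^2 - \sum_{i \in I}(\max\{\scal{x}{e_i},0\})^2$ (in particular the radicand equals $\norm{x-p}^2 \geq 0$), and taking square roots gives the second formula in \cref{eq: pr-ei}.

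I do not expect any real obstacle: the argument is a plain verification through \cref{fact: pr-K}. The only point needing a bit of care is bookkeeping --- keeping straight the three uses of orthonormality (to evaluate $\norm{p}^2$, $\scal{x}{p}$, and $\scal{x-p}{e_j}$) and the two $\max$/$\min$ scalar identities above. An alternative would be to route through Moreau's decomposition \cref{fact: Moreau} instead of \cref{fact: pr-K}, but that seems no shorter.
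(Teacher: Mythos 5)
Your proposal is correct and follows essentially the same route as the paper: both verify the three conditions of \cref{fact: pr-K} for $p = \sum_{i \in I}\max\{\scal{x}{e_i},0\}e_i$ using the same scalar identities and the orthonormality of $\set{e_i}{i\in I}$, after citing \cref{eg: fi-cone} for the closed convex cone claim. The only cosmetic differences are that you justify $x-p\in\pc K$ via \cref{lem: cone}\cref{it: c01} where the paper appeals directly to \cref{eq: K-ei}, and you obtain the distance formula from the Pythagorean identity rather than by expanding $\norm{x-p}^2$.
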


	\begin{proof}
		We first infer from \cref{eg: fi-cone} that $K$ is a nonempty closed convex cone. Thus, it is enough to verify \cref{eq: pr-ei}. To this end, set 
			\begin{equation}\label{eq: dfn-ai}
				\rb{\forall i \in I} \quad \alpha_{i} \coloneqq \max \left\{\scal{x}{e_{i}},0\right\} \in \RR_{+}
			\end{equation}
		and  
			\begin{equation}\label{eq: p-ei}
				p \coloneqq \sum_{i \in I} \alpha_{i} e_{i}.
			\end{equation}
		Then, by \cref{eq: dfn-ai}\&\cref{eq: p-ei}\&\cref{eq: K-ei}, we have $p \in K$, and by assumption, we get 
			\begin{equation}\label{eq: N-p}
				\norm{p}^{2} = \norm*{\sum_{i\in I} \alpha_{i} e_{i} }^{2} = \sum_{i \in I}\alpha_{i}^{2}.
			\end{equation}
		Furthermore,  \cref{eq: dfn-ai} implies that 
			\begin{equation}
				\rb{\forall i \in I} \quad \left[\; \alpha_{i} = \scal{x}{e_{i}}\text{~or~} \alpha_{i} =0 \;\right] \iff \alpha_{i} \rb{ \scal{x}{e_{i}} - \alpha_{i}  } =0 \iff \alpha_{i} \scal{x}{e_{i}} = \alpha_{i}^{2},
			\end{equation}
		and therefore, we get from \cref{eq: p-ei} that 
			\begin{equation}\label{eq: XP}
				\scal{x}{p}  = \scal*{x}{\sum_{i \in I} \alpha_{i}e_{i} } = \sum_{i \in I} \alpha_{i} \scal{x}{e_{i}} = \sum_{i \in I}\alpha_{i}^{2}.
			\end{equation}
		In turn, on the one hand, \cref{eq: N-p} and \cref{eq: XP} yield $\scal{x-p}{p} = \scal{x}{p}- \norm{p}^{2} = 0$. On the other hand, invoking   \cref{eq: p-ei}, \cref{eq: dfn-ai}, and our hypothesis, we deduce that 
			\begin{equation}
				\rb{\forall i \in I} \quad \scal{x-p}{e_{i}}  = \scal{x}{e_{i}} - \scal*{\sum_{j \in I} \alpha_{j}e_{j} }{e_{i}} = \scal{x}{e_{i}} - \alpha_{i} \leq 0,
			\end{equation}
		and hence, by \cref{eq: K-ei}, $x-p \in \pc{K}$. Altogether, we conclude that  $P_{K}x = p  = \sum_{i \in I} \max \left\{\scal{x}{e_{i}},0\right\} e_{i}$ via \cref{fact: pr-K}. Consequently, \cref{eq: N-p}\&\cref{eq: XP}\&\cref{eq: dfn-ai} give 
			\begin{equation}
				d_{K}^{2}\rb{x} = \norm{x-p}^{2} = \norm{x}^{2} - 2\scal{x}{p} + \norm{p}^{2} = \norm{x}^{2} - \sum_{i \in I} \alpha_{i}^{2} = \norm{x}^{2} - \sum_{i \in I} \rb{\max \left\{ \scal{x}{e_{i}},0 \right\} }^{2},
			\end{equation}
		which completes the proof.
	\end{proof}

\begin{remark}
	Here are a few comments concerning \cref{thm: pr-K-ei}.
		\begin{enumerate} 
			\item In the setting of \cref{thm: pr-K-ei}, suppose that $\left\{e_{i}\right\}_{i \in I}$ is a singleton, say $e$. Then $K= \RR_{+} e$ is a ray and  \cref{eq: pr-ei} becomes 
			\begin{equation}
		 		  P_{K}x = \max \left\{ \scal{x}{e},0 \right\} e  \quad \text{and} \quad d_{K}{\left(x\right)} = \sqrt{\norm{x}^{2} - \left(\max \left\{\scal{x}{e},0 \right\} \right)^{2} },
			\end{equation}
			which is precisely the formula for projectors onto rays (see, e.g., \cite[Example 29.31]{bauschke2017convex}).
			\item Consider the setting of \cref{thm: pr-K-ei}. Suppose that $N$ is a strictly positive integer, that $I = \left\{1,\ldots, N \right\}$, that $\HH= \RR^{N}$, and that $\left(e_{i}\right)_{i \in I}$ is the canonical orthonormal basis of $\HH$. Then $K = \RR_{+}^{N}$ is the positive orthant in $\HH$. Now take $x = \left(\xi_{i}\right)_{i \in I} \in \HH$. In the light of \cref{eq: pr-ei}, since $\left(\forall i \in I\right)~ \scal{x}{e_{i}} = \xi_{i}$, we retrieve the well-known formula
			\begin{equation}\label{eq: pr-orthant}
			P_{K} x = \left( \max \left\{\xi_{i},0\right\} \right)_{i \in I};
			\end{equation}
			see, for instance, \cite[Example 6.29]{bauschke2017convex}. Moreover, upon setting $I_{-} \coloneqq \menge{ i \in I}{\xi_{i} < 0}$, we derive from \cref{eq: pr-ei} that 
			\begin{equation}
			d_{ K }{\left(x\right)} = \sqrt{\norm{x}^{2} - \sum_{i \in I} \left(\max \left\{\xi_{i},0\right\} \right)^{2}} = \sqrt{\sum_{i\in I} \xi_{i}^{2} - \sum_{i \in I \smallsetminus I_{-} } \xi_{i}^{2}  } = \sqrt{ \sum_{i \in I_{-} } \xi_{i}^{2}  }
			\end{equation}
			with the convention that $\sum_{i \in \varnothing} \xi_{i}^{2} =0$.
		\end{enumerate}
\end{remark}

\begin{corollary}
	Let $\set{e_{i}}{i \in I}$ be a nonempty finite orthonormal subset of $\HH$. Set 
		\begin{equation}
			K \coloneqq \menge{ y \in \HH }{ \rb{\forall i \in I}~ \scal{y}{e_{i}} \leq 0 },
		\end{equation}
	and let $x \in \HH$. Then $K$ is a nonempty closed convex cone in $\HH$, 
		\begin{equation}\label{eq: pr-Kei-po}
			P_{K}x = x - \sum_{i \in I} \max\left\{\scal{x}{e_{i}},0 \right\} e_{i}, \quad \text{and} \quad d_{K}\rb{x} = \sqrt{ \sum_{i \in I} \rb{  \max\left\{\scal{x}{e_{i}},0 \right\}}^{2} }.
		\end{equation}
\end{corollary}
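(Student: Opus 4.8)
The plan is to recognize $K$ as the polar cone of the finitely generated cone $L\coloneqq\sum_{i\in I}\RR_{+}e_{i}$ treated in \cref{thm: pr-K-ei}, and then to read off everything from the Moreau decomposition. First I would note that $L$ is a nonempty closed convex cone (by \cref{eg: fi-cone}, or directly by \cref{thm: pr-K-ei}) and that, writing $L=\pos\set{e_{i}}{i\in I}$, \cref{lem: cone}\cref{it: c01} gives, for every $y\in\HH$, the chain $y\in\pc{L}\iff\sup\scal{y}{\set{e_{i}}{i\in I}}\leq 0\iff(\forall i\in I)~\scal{y}{e_{i}}\leq 0$. Hence $K=\pc{L}$. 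In particular $K$ is a nonempty closed convex cone, being a polar cone (equivalently, a finite intersection of the closed convex cones $\menge{y\in\HH}{\scal{y}{e_{i}}\leq 0}$, each of which contains $0$).

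Next, since $L$ is a nonempty closed convex cone, the Moreau decomposition \cref{fact: Moreau} yields, for every $x\in\HH$, both $x=P_{L}x+P_{\pc{L}}x$ and $\norm{x}^{2}=d_{L}^{2}(x)+d_{\pc{L}}^{2}(x)$. Combining the first identity with the formula for $P_{L}x$ from \cref{eq: pr-ei} in \cref{thm: pr-K-ei} gives $P_{K}x=P_{\pc{L}}x=x-P_{L}x=x-\sum_{i\in I}\max\left\{\scal{x}{e_{i}},0\right\}e_{i}$, which is the asserted projection formula.

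Finally, for the distance I would use the second Moreau identity together with the expression for $d_{L}(x)$ in \cref{eq: pr-ei}, obtaining $d_{K}^{2}(x)=d_{\pc{L}}^{2}(x)=\norm{x}^{2}-d_{L}^{2}(x)=\norm{x}^{2}-\bigl(\norm{x}^{2}-\sum_{i\in I}(\max\left\{\scal{x}{e_{i}},0\right\})^{2}\bigr)=\sum_{i\in I}(\max\left\{\scal{x}{e_{i}},0\right\})^{2}$; equivalently, one can compute $d_{K}^{2}(x)=\norm{x-P_{K}x}^{2}=\norm*{\sum_{i\in I}\max\left\{\scal{x}{e_{i}},0\right\}e_{i}}^{2}$ and expand using the orthonormality of $\set{e_{i}}{i\in I}$. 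Every step is a direct citation of an earlier result, so there is essentially no obstacle; the only point that requires a moment's thought is the identification $K=\pc{L}$, after which \cref{thm: pr-K-ei} and \cref{fact: Moreau} do all the work.
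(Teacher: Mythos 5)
Your proposal is correct and follows essentially the same route as the paper: identify $K$ with the polar of the finitely generated cone $L=\sum_{i\in I}\RR_{+}e_{i}$, then combine Moreau's decomposition (\cref{fact: Moreau}) with the formulae of \cref{thm: pr-K-ei}. The only cosmetic difference is that the paper applies Moreau to $K$ itself and therefore invokes the bipolar theorem to compute $\pc{K}=L$, whereas you apply Moreau directly to $L$ and use $K=\pc{L}$, which sidesteps that step; both are valid.
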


	\begin{proof}
		Since  
			\begin{equation}\label{eq: pc-ei}
				K = \bigcap_{i\in I} \pc{\{ e_{i} \}},
			\end{equation}
		we see that $K$ is a nonempty closed convex cone. Next, by \cref{eq: pc-ei},  \cite[Proposition 6.27]{bauschke2017convex}  implies that 
			\begin{math}
				K = \bigcap_{i\in I} \pc{\rb{\RR_{+}e_{i}}} = \pc{\rb*{\sum_{i\in I} \RR_{+} e_{i} }},
			\end{math}
		and since $\sum_{i \in I}\RR_{+} e_{i}$ is a nonempty  closed convex cone by \cref{eg: fi-cone}, taking the polar cones and invoking \cite[Corollary 6.34]{bauschke2017convex} yield
			\begin{math}
				\pc{K} = \rb*{\sum_{i\in I} \RR_{+} e_{i} }^{\ominus\ominus} = \sum_{i\in I} \RR_{+} e_{i} .
			\end{math}
		Hence, according to Moreau's theorem (\cref{fact: Moreau}) and \cref{thm: pr-K-ei}, we conclude that 
				\begin{math}
					P_{K}x = x - P_{\pc{K} }x = x - \sum_{i \in I} \max\left\{\scal{x}{e_{i}},0 \right\} e_{i}
				\end{math}
		and that 
			\begin{equation}
				d_{K}^{2}\rb{x} = \norm{x}^{2} - d_{\pc{K}}^{2}\rb{x} = \norm{x}^{2}- \rb*{\norm{x}^{2} - \sum_{i\in I} \rb{\max \left\{\scal{x}{e_{i}} ,0 \right\} }^{2} } = \sum_{i\in I} \rb{\max \left\{\scal{x}{e_{i}} ,0 \right\} }^{2},
			\end{equation}
		as claimed in \cref{eq: pr-Kei-po}.
	\end{proof}

\section{The projector onto the intersection of a cone and a
ball}

\label{s:main1}

Our first set of main results is presented in this section.
It turns out that the projector onto the intersection of a cone
and a ball has a pleasing explicit form.

\begin{theorem}[cone intersected with ball]
\label{thm: K-ba}
	Let $K$ be a nonempty closed convex cone in $\HH$, let $\rho \in \RR_{++}$, and set $C \coloneqq K \cap \ball{0}{\rho}$. Then 
			 \begin{equation}\label{eq: K-ba}
				\rb{\forall x \in \HH} \quad  P_{C}x = \frac{\rho}{\max \left\{\norm{P_{K}x}, \rho \right\} }  P_{K}x \quad  \text{and} \quad  d_{C}{\left(x\right)} = \sqrt{ d_{K}^{2}{\rb{x}} + \rb{\max \left\{\norm{P_{K}x} - \rho ,0\right\} }^{2}  }.
			\end{equation}
\end{theorem}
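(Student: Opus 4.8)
The plan is to split into the two regimes determined by whether $x$ projects into the ball already, and in each case use the cone projection characterization \cref{fact: pr-K} together with the convex projection characterization \cref{eq: pr-C} for $C = K\cap\ball{0}{\rho}$, which is a nonempty closed convex set (it contains $0$).

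\emph{Case 1: $\norm{P_{K}x}\leq \rho$.} Here I claim $P_{C}x = P_{K}x$. Indeed $P_{K}x\in K$ and $\norm{P_{K}x}\leq\rho$, so $P_{K}x\in C$; and for every $y\in C\subseteq K$, \cref{fact: pr-K} gives $\scal{y-P_{K}x}{x-P_{K}x} = \scal{y}{x-P_{K}x} - \scal{P_{K}x}{x-P_{K}x} = \scal{y}{x-P_{K}x}\leq 0$ since $x-P_{K}x\in\pc{K}$ and $y\in K$. By \cref{eq: pr-C}, $P_{C}x = P_{K}x$, which matches \cref{eq: K-ba} because $\max\{\norm{P_{K}x},\rho\} = \rho$ in this case, and the distance formula follows from $d_{C}(x) = \norm{x-P_{K}x} = d_{K}(x)$ with $\max\{\norm{P_{K}x}-\rho,0\}=0$.

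\emph{Case 2: $\norm{P_{K}x} > \rho$.} Then in particular $P_{K}x\neq 0$, so by \cref{lem: cpt}\cref{it: cpt1} we have $x\notin\pc{K}$. Set $p\coloneqq (\rho/\norm{P_{K}x})P_{K}x$, so $\norm{p}=\rho$ and $p\in K\cap\sphere{0}{\rho}\subseteq C$. I will verify $p = P_{C}x$ via \cref{eq: pr-C}, i.e. that $\scal{y-p}{x-p}\leq 0$ for all $y\in C$. Write $x - p = (x - P_{K}x) + (1-\rho/\norm{P_{K}x})P_{K}x$; since $\rho/\norm{P_{K}x}\in\left]0,1\right[$, the coefficient $\mu\coloneqq 1-\rho/\norm{P_{K}x}$ is in $\left]0,1\right[$, and $p = (\rho/\norm{P_{K}x})P_{K}x$. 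For $y\in C$: $\scal{y}{x-P_{K}x}\leq 0$ (as $y\in K$, $x-P_{K}x\in\pc{K}$); $\scal{y}{P_{K}x}\leq\norm{y}\norm{P_{K}x}\leq\rho\norm{P_{K}x} = \norm{p}\norm{P_{K}x}$ by Cauchy--Schwarz and $\norm{y}\leq\rho$; and $\scal{p}{x-P_{K}x}=0$, $\scal{p}{P_{K}x} = (\rho/\norm{P_{K}x})\norm{P_{K}x}^{2} = \rho\norm{P_{K}x}$. Combining, $\scal{y-p}{x-p} = \scal{y}{x-P_{K}x} + \mu(\scal{y}{P_{K}x} - \rho\norm{P_{K}x})\leq 0$ since both summands are $\leq 0$ and $\mu>0$. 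Hence $P_{C}x = p$, matching \cref{eq: K-ba} as $\max\{\norm{P_{K}x},\rho\}=\norm{P_{K}x}$; the distance formula then follows directly from \cref{lem: cpt}\cref{it: cpt2} with this $\rho$, giving $d_{C}(x) = \norm{x-p} = \sqrt{d_{K}^{2}(x) + (\norm{P_{K}x}-\rho)^{2}}$ and $\max\{\norm{P_{K}x}-\rho,0\} = \norm{P_{K}x}-\rho$.

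The two cases are exhaustive and the formulae agree on the boundary $\norm{P_{K}x}=\rho$, so \cref{eq: K-ba} holds for all $x\in\HH$. The only mildly delicate point is the decomposition of $x-p$ in Case 2 and bounding $\scal{y-p}{x-p}$ term by term using Cauchy--Schwarz together with $\norm{y}\leq\rho$; everything else is a routine application of \cref{fact: pr-K}, \cref{eq: pr-C}, and \cref{lem: cpt}. I expect no real obstacle — the main thing to be careful about is keeping the sign bookkeeping straight in the inner product expansion.
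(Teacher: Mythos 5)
Your proof is correct and follows essentially the same route as the paper: the same two cases on whether $\norm{P_{K}x}\leq\rho$, the same candidate $p=\bigl(\rho/\max\{\norm{P_{K}x},\rho\}\bigr)P_{K}x$, the same decomposition $x-p=(x-P_{K}x)+(1-\beta)P_{K}x$ with the first piece controlled by the cone projection characterization and the second by Cauchy--Schwarz with $\norm{y}\leq\rho$, and \cref{lem: cpt}\cref{it: cpt2} for the distance. The only cosmetic difference is that you invoke \cref{fact: pr-K} ($x-P_{K}x\in\pc{K}$ and $x-P_{K}x\perp P_{K}x$) where the paper uses the variational inequality \cref{eq: pr-C} at the point $(1/\beta)y\in K$; these are equivalent.
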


	\begin{proof}
		Take $x \in \HH$, set $\beta \coloneqq \rho / \max \left\{\norm{P_{K}x},\rho \right\} \in \RR_{++}$, and set $p \coloneqq \beta P_{K}x$. Then, since $K$ is a cone and $P_{K}x \in K$, we get  $p \in K$, and thus, since $\norm{p} = \beta \norm{P_{K}x} = \rho \rb{\norm{P_{K}x} / \max \{ \norm{P_{K}x},\rho  \}} \leq \rho $, it follows that  $p \in K \cap \ball{0}{\rho} = C$. Hence, because $C$ is closed and convex, in the light of \cref{eq: pr-C}, it remains to verify that  $\rb{\forall y \in C} ~ \scal{x-p}{y-p} \leq 0$. To this end,  take $y \in C$, and we  consider two alternatives: 
		
		\hspace{\parindent}(A) $\norm{P_{K}x} \leq \rho$: Then $\beta = \rho / \rho =1$. It follows that $p = P_{K}x$, and  so
		\begin{equation}\label{eq: dst-co1}
		\norm{x-p} = \norm{x-P_{K}x} = d_{K}{\left(x\right)}.
		\end{equation}
		Next, because $y \in K$, \cref{eq: pr-C} asserts that  $\scal{x-p}{y-p} = \scal{x-P_{K}x}{y-P_{K}x} \leq 0$. 
		
		\hspace{\parindent}(B) $\norm{P_{K}x} > \rho $: Then $\beta = \rho / \norm{P_{K}x} \in \left] 0 ,1 \right[$,  and so \cref{lem: cpt}\ref{it: cpt2} implies that 
		\begin{equation}\label{eq: dst-co2}
		\norm{x-p} = \sqrt{d_{K}^{2}{\left(x\right)} + \left(\norm{P_{K}x} - \rho \right)^{2} }.
		\end{equation}
		In turn, on the one hand, since $y$ belongs to the cone $K$, it follows  that $\left(1/\beta\right)y \in K$, from which and \cref{eq: pr-C} we deduce that 
		\begin{equation}\label{eq: co-b1}
		\scal{x - P_{K}x }{y - \beta P_{K}x}  = \beta \scal{x-P_{K}x}{\left(1/\beta\right)y - P_{K}x} \leq 0.
		\end{equation}
		On the other hand, because $y \in \ball{0}{\rho}$ and $\beta = \rho / \norm{P_{K}x}$,  the Cauchy{\textendash}Schwarz  inequality yields 
		\begin{equation}\label{eq: co-b2}
		\scal{P_{K} x}{y-\beta P_{K}x} = \scal{P_{K}x}{y} - \rho \norm{P_{K}x} \leq \norm{P_{K}x}\norm{y} - \rho \norm{P_{K}x} \leq 0. 
		\end{equation}
		Altogether, combining \cref{eq: co-b1}\&\cref{eq: co-b2} and using the  fact that  $\beta \in \left] 0,1 \right[$, we obtain 
		\begin{subequations} 
			\begin{align}
			\scal{x-p}{y-p} 
			& = \scal{x-\beta P_{K}x}{y-\beta P_{K}x} \\
			& = \scal{x-P_{K}x}{ y-\beta P_{K}x } + \left(1-\beta\right)\scal{P_{K}x}{y-\beta P_{K}x} \\
			& \leq 0.
			\end{align}
		\end{subequations}
		\indent Hence, in both cases, we have $\scal{x-p}{y-p} \leq 0$. Thus $ p =P_{C}x$, and it follows from \cref{eq: dst-co1}\&\cref{eq: dst-co2} that 
		\begin{equation}
		d_{C}\rb{x} = \norm{x- P_{C}x} = \norm{x-p} = \sqrt{d_{K}^{2}{\left(x\right)} +  \left( {\max}{ \left\{ \norm{P_{K}x} - \rho ,0 \right\}   } \right)^{2} },
		\end{equation}
		as stated in \cref{eq: K-ba}.
	\end{proof}

Here are some easy consequences of \cref{thm: K-ba}.

\begin{example}\label{eg: pr-ba}
	In the setting of \cref{thm: K-ba}, suppose that $K = \HH$. Then  $C = \ball{0}{\rho}$,  $P_{K} = \Id$,  $d_{K} \equiv 0$, and \cref{eq: K-ba} becomes  
	\begin{equation}\label{eq: pr-ba}
	\left(\forall x \in \HH\right)\quad P_{C}x = \frac{\rho}{\max \{ \norm{x},\rho \} }  x \quad \text{and} \quad d_{C}{\left(x\right)} =  \max \{ \norm{x} - \rho ,0 \}.
	\end{equation}
	We thus recover the formula for projectors onto balls.
\end{example}

\begin{corollary}\label{cor: pr-ba}
	Let $K$ be a nonempty closed convex cone in $\HH$, let
	$\rho \in \RR_{++}$, and set $C \coloneqq K \cap
	\ball{0}{\rho}$. Then\footnote{Here and
	elsewhere, ``$\circ$'' denotes the composition of
	operators.} $P_{C} = P_{\ball{0}{\rho}} \circ P_{K}$.
\end{corollary}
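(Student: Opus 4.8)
The plan is to show that the explicit formula in \cref{eq: K-ba} for $P_{C}x$ is precisely what one obtains by first projecting onto $K$ and then onto the ball $\ball{0}{\rho}$; the corollary then follows by unwinding definitions. Fix $x \in \HH$. By \cref{thm: K-ba}, we have $P_{C}x = \bigl(\rho / \max\{\norm{P_{K}x},\rho\}\bigr) P_{K}x$. On the other hand, applying the formula for the projector onto a ball recorded in \cref{eg: pr-ba} (with the point $P_{K}x$ in place of $x$) gives $P_{\ball{0}{\rho}}\bigl(P_{K}x\bigr) = \bigl(\rho / \max\{\norm{P_{K}x},\rho\}\bigr) P_{K}x$. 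Comparing the two right-hand sides yields $P_{C}x = P_{\ball{0}{\rho}}\bigl(P_{K}x\bigr) = \bigl(P_{\ball{0}{\rho}} \circ P_{K}\bigr)(x)$, and since $x$ was arbitrary we conclude $P_{C} = P_{\ball{0}{\rho}} \circ P_{K}$.

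I do not anticipate any real obstacle here: this is a two-line consequence of already-established results, with the only subtlety being a cosmetic one — making sure that the formula from \cref{eg: pr-ba} is stated for an arbitrary argument, so that it may legitimately be evaluated at $P_{K}x$ rather than at $x$. Since \cref{eg: pr-ba} asserts that $P_{\ball{0}{\rho}}z = \bigl(\rho / \max\{\norm{z},\rho\}\bigr) z$ for every $z \in \HH$, this is immediate. One could alternatively bypass \cref{eg: pr-ba} and simply observe that both operators send $x$ to the same explicit vector, but invoking \cref{eg: pr-ba} makes the ``project, then project'' interpretation transparent and is the cleaner write-up.
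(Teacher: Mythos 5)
Your proposal is correct and coincides with the paper's own (primary) argument, which likewise just combines \cref{eq: K-ba} with the ball-projection formula \cref{eq: pr-ba} evaluated at $P_{K}x$. The paper additionally notes an alternative route via a result of Yu on decomposing proximity operators, but your direct computation is exactly its main proof.
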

	
	\begin{proof}
		Combine \cref{eq: K-ba} and \cref{eq: pr-ba}.
		Alternatively, set\footnote{We use the symbol
		$\iota_{C}$ to denote the \emph{indicator
		function} of a subset $C$ of $\HH$:
		$\iota_C(x)=0$, if $x\in
		C$; $\iota_C(x)=+\infty$, if $x\notin C$.} $f\coloneqq  \iota_{\ball{0}{\rho}}$ and $\kappa \coloneqq  \iota_{K}$ in the equivalence (iii)\ensuremath{\Leftrightarrow}(iv) of \cite[Theorem 4]{yu2013decomposing}. (Note that $\iota_{\ball{0}{\rho}} + \iota_{K} = \iota_{C}$.)
	\end{proof}

\begin{remark}
	In the setting of \cref{cor: pr-ba},  as we shall see in \cref{eg: Kba-ncom},  $P_{C} \neq  P_{K}\circ P_{\ball{0}{\rho}}$, i.e., $P_{\ball{0}{\rho}}\circ P_{K} \neq P_{K}\circ P_{\ball{0}{\rho}}$,  in general.
\end{remark}

\begin{example}\label{eg: Kba-ncom}
	Suppose that $\HH = \RR^{2}$. Set $K \coloneqq
	\RR_{+}^{2}$ and  $x \coloneqq \rb{1,-1}$. Then (see also
	\cref{figgy}) 
		\begin{equation}
			\rb{P_{K}\circ P_{\ball{0}{1}}}x  = P_{K} \rb{ P_{\ball{0}{1}}x } \overset{\cref{eq: pr-ba}}{=} P_{K } \rb*{\frac{1}{\sqrt{2}},-\frac{1}{\sqrt{2}}} \overset{\cref{eq: pr-orthant}}{=} \rb*{\frac{1}{\sqrt{2}},0}
		\end{equation}
	and 
		\begin{equation}
			\rb{P_{\ball{0}{1}}\circ P_{K}}x = P_{\ball{0}{1}}\rb{P_{K}x } \overset{\cref{eq: pr-orthant}}{=} P_{\ball{0}{1}}\rb{1,0} \overset{\cref{eq: pr-ba}}{=} \rb{1,0}.
 		\end{equation}
 	Hence 
 		\begin{equation}
	 		P_{\ball{0}{1}}\circ P_{K} \neq P_{K}\circ P_{\ball{0}{1}}.
 		\end{equation}
\end{example}

	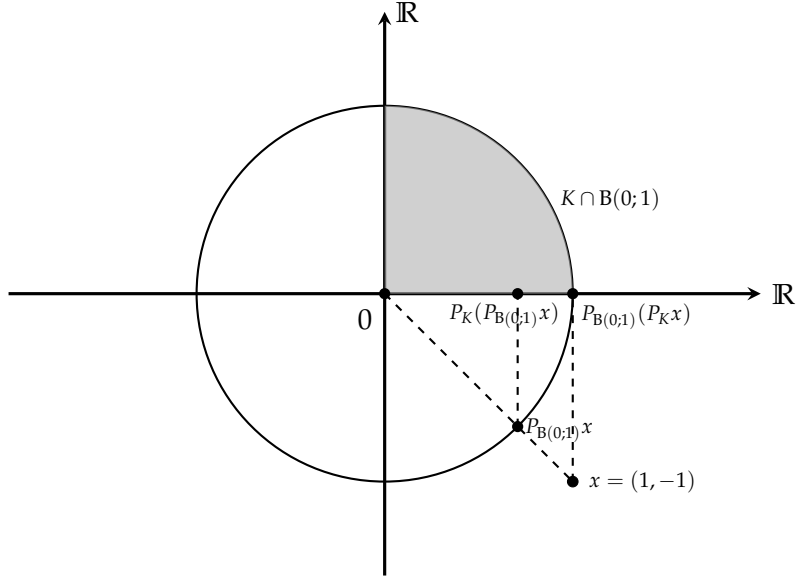
\begin{figure}[H]	
		\centering
		\begin{center}
			\begin{tikzpicture}[scale=2.5,
			axis/.style={-stealth,very thick},
			vector/.style={-latex,red, thick},
			vector guide/.style={dashed,red,thick},
			mynode/.style={fill,circle,inner sep=1.5pt,outer sep=0pt}
			]
			\draw[thick] (0,0) circle (1cm);
			\draw[axis] (-2,0)--(2,0) node [right] {$\RR$};
			\draw[axis] (0,-1.5)--(0,1.5) node [right] {$\RR$};
			
			%\path[pattern=horizontal lines, pattern color=gray] (1,0) arc (0:90:1) -- (0,0) -- cycle;
			\path[fill =gray!60, fill opacity=.6] (1,0) arc (0:90:1) -- (0,0) -- cycle;
			
			\coordinate (O) at (0,0);
			\node[mynode,fill=black,label={[label distance = 0.05mm]{-110}:{$0$} }] at (O) {};
			
			\coordinate (X) at (1,-1);
			%\draw at (X) node;
			\node[circle,draw=black,fill=black,inner sep=0pt, minimum size= 4pt,label={[label distance=0.05 pt]{0}: {\scriptsize  $x=(1,-1)$}}] at (X) {};

			\coordinate (A) at ( .7071067811865475244008445  , -.7071067811865475244008445  ) ;
			\node[mynode,fill=black] at (A) {};
			\node at ($(A) + (2.2mm,-0.2mm) $) {\scriptsize $P_{\ball{0}{1}}x $};

			\draw[black, thick, dashed] (O) -- (X);
			
			\coordinate (B) at (.7071067811865475244008445, 0);
			\node[mynode,fill=black ] at  (B)  {};
			\node at ($(B)+(-2pt,.25pt)$) [anchor = north]  { \scriptsize $P_{K}\rb{P_{\ball{0}{1}}x} $} ;
			\draw[black, thick, dashed] (A) -- (B) ;

			\node at (1.2,0.5) {\scriptsize$ K \cap \ball{0}{1}$} ;
			
			\coordinate (C) at (1,0);
			\node[mynode, fill =black, label= { [label distance = -1 mm]{-10} : {\scriptsize $P_{\ball{0}{1}}\rb{ P_{K}x } $ }  }] at (C) {};
			
			\draw[black, thick, dashed] (X) -- (C) ;

			\end{tikzpicture}
			\caption{\cref{eg: Kba-ncom} illustrates that the
			projectors onto a cone and ball may fail
			to commute. } 
		\label{figgy}
		\end{center}
	\end{figure}

As will be seen in the next result, \cref{eg: Kba-ncom} is, however, not a coincidence.

\begin{corollary}\label{cor: PK-Pba}
	Let $K$ be a nonempty closed convex cone in $\HH$, 
	and let $\rho \in \RR_{++}$. Then 
		\begin{equation}
			\rb{\forall x \in \HH} \quad \rb{P_{K} \circ P_{\ball{0}{\rho}}  }x = \frac{\rho}{ \max \left\{ \norm{x},\rho \right\} } P_{K}x.
		\end{equation}
\end{corollary}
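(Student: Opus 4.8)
The plan is to reduce everything to the positive homogeneity of the projector onto the cone $K$, combined with the already-established formula for the projector onto a ball. Fix $x \in \HH$. By \cref{eg: pr-ba} (see \cref{eq: pr-ba}), we have $P_{\ball{0}{\rho}}x = \lambda x$, where $\lambda \coloneqq \rho / \max\left\{\norm{x},\rho\right\} \in \RR_{++}$. Consequently $\rb{P_K \circ P_{\ball{0}{\rho}}}x = P_K\rb{\lambda x}$, and it remains only to show that $P_K\rb{\lambda x} = \lambda P_K x$.

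To establish this, I would first record the (standard) fact that $P_K$ is positively homogeneous: for every $z \in \HH$ and every $\mu \in \RR_{++}$, one has $P_K\rb{\mu z} = \mu P_K z$. Indeed, set $p \coloneqq P_K z$. Then \cref{fact: pr-K} gives $p \in K$, $z - p \perp p$, and $z - p \in \pc K$. Since $K$ is a cone, $\mu p \in K$; since $\scal{\mu\rb{z-p}}{\mu p} = \mu^{2}\scal{z-p}{p} = 0$, we get $\mu z - \mu p \perp \mu p$; and since $\pc K = \menge{u \in \HH}{\sup \scal{u}{K} \leq 0}$ is plainly a cone, $\mu z - \mu p = \mu\rb{z-p} \in \pc K$. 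Applying the converse implication in \cref{fact: pr-K} to the point $\mu z$ then yields $P_K\rb{\mu z} = \mu p = \mu P_K z$.

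Combining these two observations with $\mu = \lambda$ and $z = x$ gives
\begin{equation}
	\rb{P_K \circ P_{\ball{0}{\rho}}}x = P_K\rb{\lambda x} = \lambda P_K x = \frac{\rho}{\max\left\{\norm{x},\rho\right\}}\, P_K x,
\end{equation}
which is the asserted identity. There is no genuine obstacle here: the only point worth isolating is the positive homogeneity of $P_K$ (a quick consequence of \cref{fact: pr-K}), after which the conclusion follows by substituting the ball-projection formula from \cref{eg: pr-ba}.
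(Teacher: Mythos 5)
Your proof is correct and follows essentially the same route as the paper: apply the ball-projection formula from \cref{eg: pr-ba} and then use the positive homogeneity of $P_K$. The only difference is that the paper cites this homogeneity as a known result (\cite[Proposition~29.29]{bauschke2017convex}), whereas you derive it directly from \cref{fact: pr-K}; your derivation is valid.
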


	\begin{proof}
		It follows from \cref{eq: pr-ba} and \cite[Proposition 29.29]{bauschke2017convex}  that
			\begin{equation}
				\rb{\forall x \in \HH} \quad \rb{P_{K} \circ P_{\ball{0}{\rho}}  }x  
				 = P_{K} \rb{P_{\ball{0}{\rho } } x } \\ 
				 = P_{K}\rb*{ \frac{\rho}{\max\left\{ \norm{x},\rho \right\} } x }  \\ 
				 = \frac{\rho}{\max\left\{ \norm{x},\rho \right\} } P_{K}x, \\
			\end{equation} 
		as desired.
	\end{proof}

\begin{remark}
	Consider the setting of \cref{cor: PK-Pba}. Using \cref{cor: PK-Pba}, \cref{thm: K-ba}, and \cref{cor: pr-ba}, we deduce that 
		\begin{equation}
			\rb{\forall x \in \HH} \quad \rb{P_{K}\circ P_{\ball{0}{\rho}} }x = \frac{\max \{ \norm{P_{K}x },\rho \} }{\max\left\{ \norm{x},\rho \right\}} \rb{ P_{\ball{0}{\rho}} \circ P_{K} }x.
		\end{equation}
\end{remark}

\section{The projector onto the intersection of a cone and a
sphere}

In this section, which contains our second half of main results, 
we develop formulae for the projector onto the
intersection of a cone and a sphere. 

\label{s:main2}

\begin{theorem}\label{thm: K-S}
	Let $K$ be a nonempty closed convex cone in $\HH$, let $\rho \in \RR_{++}$, and set $C \coloneqq K \cap \sphere{0}{\rho}$. Suppose that $ K \neq \{0\}$. Then the following hold: 
		\begin{enumerate}
			\item\label{it: KS1} \begin{math}
				\rb{\forall x \in K^{\perp }}~ P_{C}x = C \text{~and~} d_{C}\rb{x} = \sqrt{\norm{x}^{2} + \rho^{2} }.
			\end{math}
			\item\label{it: KS2} \begin{math}
				\rb{\forall x \in \HH\smallsetminus \pc{K}} ~ P_{C}x = \left\{ \rb{\rho / \norm{ P_{K}x }  }  P_{K}x \right\} \text{~and~} d_{C}\rb{x} = \sqrt{ d_{K}^{2}\rb{x} + \rb{\norm{P_{K}x} - \rho }^{2}  }.
			\end{math}
		\end{enumerate}
\end{theorem}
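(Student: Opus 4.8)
The plan is to treat the two cases of the statement separately, dispatching \cref{it: KS1} quickly and concentrating on \cref{it: KS2}. The organizing remark is that $C = K \cap \sphere{0}{\rho}$ consists of vectors of the common norm $\rho$, so that for any $x \in \HH$, minimizing $\norm{x-y}$ over $y \in C$ is equivalent to maximizing $\scal{x}{y}$ over $y \in C$; this is exactly the content of \cref{lem: pil1}\cref{it:p11}, which I will use to identify $P_{C}x$. I first record that $C \neq \varnothing$: since $K \neq \{0\}$ is a cone, this is \cref{rem: clar}.

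For \cref{it: KS1}, fix $x \in K^{\perp}$. As $C \subseteq K$, the functional $\scal{x}{\cdot\,}$ vanishes on all of $C$, so its maximum over $C$ equals $0$ and is attained at every point of $C$; \cref{lem: pil1}\cref{it:p11} then gives $P_{C}x = C$. Choosing any $y \in C$ and using $x \perp y$ together with $\norm{y} = \rho$, the Pythagorean identity yields $d_{C}\rb{x} = \norm{x-y} = \sqrt{\norm{x}^{2} + \rho^{2}}$.

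For \cref{it: KS2}, fix $x \in \HH\smallsetminus\pc{K}$, so that $P_{K}x \neq 0$ by \cref{lem: cpt}\cref{it: cpt1}; abbreviate $q \coloneqq P_{K}x$ and set $p \coloneqq \rb{\rho/\norm{q}}q$. Since $K$ is a cone we have $p \in K$, and $\norm{p} = \rho$, so $p \in C$. By Moreau's theorem (\cref{fact: Moreau}) and \cref{fact: pr-K}, $x - q = P_{\pc{K}}x$ lies in $\pc{K}$ and $x - q \perp q$. Now for every $y \in C$, splitting $\scal{x}{y} = \scal{q}{y} + \scal{x-q}{y}$ and combining the Cauchy{\textendash}Schwarz inequality (using $\norm{y} = \rho$) with $\scal{x-q}{y} \leq 0$ (because $y \in K$ and $x - q \in \pc{K}$) gives $\scal{x}{y} \leq \rho\norm{q}$; conversely, a direct computation using $x - q \perp q$ gives $\scal{x}{p} = \rho\norm{q}$. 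Hence $\max\scal{x}{C} = \rho\norm{q}$ and is attained at $p$, so $p \in P_{C}x$ by \cref{lem: pil1}\cref{it:p11}. For uniqueness, if $y \in P_{C}x$ then equality must hold throughout the above chain, in particular $\scal{q}{y} = \norm{q}\norm{y}$; since $q \neq 0$ and $\norm{y} = \rho > 0$, the equality case of Cauchy{\textendash}Schwarz forces $y = \rb{\rho/\norm{q}}q = p$, whence $P_{C}x = \{p\}$. Finally $d_{C}\rb{x} = \norm{x-p}$, and \cref{lem: cpt}\cref{it: cpt2} applied with this $\rho$ evaluates this to $\sqrt{d_{K}^{2}\rb{x} + \rb{\norm{P_{K}x} - \rho}^{2}}$.

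The genuinely routine ingredients — $p \in C$ and the two distance formulas — are immediate from results already in hand, and the only point demanding a little care is the uniqueness in \cref{it: KS2}, which hinges on tracking the Cauchy{\textendash}Schwarz equality case and on the twin facts that $x - P_{K}x$ is orthogonal to $P_{K}x$ and belongs to $\pc{K}$; both come from \cref{fact: pr-K} via \cref{fact: Moreau}. I anticipate no obstacle beyond this bookkeeping.
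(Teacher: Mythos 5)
Your proof is correct, and its skeleton coincides with the paper's: the same candidate $p=\rb{\rho/\norm{P_{K}x}}P_{K}x$, the verification that $p\in C$, the reduction to maximizing $\scal{x}{\cdot\,}$ over the equal-norm set $C$ via \cref{lem: pil1}\cref{it:p11}, and the distance formula from \cref{lem: cpt}\cref{it: cpt2}. The one place you diverge is how uniqueness of the maximizer is obtained: you split $\scal{x}{y}=\scal{P_{K}x}{y}+\scal{x-P_{K}x}{y}$, bound the two terms by Cauchy--Schwarz and the polar-cone inequality, and then invoke the equality case of Cauchy--Schwarz; the paper instead splits $\scal{x}{p-y}$ using the variational inequality \cref{eq: pr-C} together with the polarization identity, arriving at the quantitative estimate $\scal{x}{p}-\scal{x}{y}\geq\rb{2\alpha}^{-1}\norm{p-y}^{2}$ with $\alpha=\rho/\norm{P_{K}x}$, which exhibits strict decrease away from $p$ directly. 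Both mechanisms are sound; yours is marginally more elementary, while the paper's yields a sharper quadratic-growth inequality as a by-product. Your handling of \cref{it: KS1} through \cref{lem: pil1}\cref{it:p11} is also fine, though the paper gets it even faster by observing that $\norm{x-y}$ is constant over $y\in C$.
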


	\begin{proof}
		We first observe that, by assumption and \cref{rem: clar}, $C \neq \varnothing$.
		
		\cref{it: KS1}: Fix $x \in K^{\perp}$. Then, for every $y \in C = K \cap \sphere{0}{\rho}$, since $x \perp y$ and $\norm{y} = \rho$, we get $\norm{x-y}^{2} = \norm{x}^{2} + \norm{y}^{2} = \norm{x}^{2} + \rho^{2}$.  It follows that $d_{C}\rb{x} = \sqrt{\norm{x}^{2} +\rho^{2} }$ and that  $P_{C}x = C$, as desired.
		
		\cref{it: KS2}: First, by the very definition of $C$, we see that 
			\begin{equation}\label{eq: KS-edst}
				\text{$C$ consists of vectors of equal norm.}
			\end{equation}
		Now take $x \in \HH \smallsetminus \pc{K}$; set\footnote{Due to \cref{lem: cpt}\cref{it: cpt1}, we have $P_{K}x \neq 0$.} $\alpha \coloneqq \rho / \norm{P_{K} x}  \in \RR_{++}$ and 
			\begin{equation}\label{eq: KS-ap}
				 \quad p \coloneqq \alpha P_{K}x.
			\end{equation}
		Then, because $P_{K}x$ belongs to the cone $K$, we obtain $p \in K$, and because  
			\begin{equation}\label{eq: KS-Np}
				\norm{p} = \norm*{ \frac{\rho}{\norm{P_{K}x } } P_{K}x } = \rho,
			\end{equation}
		it follows that 
			\begin{equation}\label{eq: KS-pC}
				p \in K \cap \sphere{0}{\rho} = C.
			\end{equation}
		Next, fix   $y \in C$. Since $y \in  C \subseteq
		K$ and $K$ is a cone, we have $\alpha^{-1}y \in
		K$. Therefore, since $\norm{y} = \rho$, we derive
		from \cref{eq: KS-ap}, \cref{eq: pr-C}, and \cref{eq: KS-Np}  that 
			\begin{subequations}
				\begin{align}
					\scal{x}{p} - \scal{x}{y}	
						& = \scal{x}{p-y} \\
						& = \scal{x- P_{K}x }{ p-y } + \scal{P_{K}x}{ p-y} \\
						& = \scal{x-P_{K}x }{\alpha P_{K}x - y} + \scal{\alpha^{-1}p }{ p - y} \\
						& = \alpha \underbrace{\scal{x - P_{K}x }{ P_{K}x - \alpha^{-1}y}}_{\geq 0 \text{~by \cref{eq: pr-C}}} + \alpha^{-1}\scal{ p}{p - y } \\
						& \geq \rb{2\alpha}^{-1}\rb{ \norm{ p }^{2} + \norm{p - y }^{2} - \norm{y}^{2} } \\
						& = \rb{2\alpha}^{-1}\rb{ \rho^{2} + \norm{p - y }^{2} - \rho^{2} } \\
						& = \rb{2\alpha}^{-1}\norm{p-y}^{2}.
				\end{align}
			\end{subequations}
		To summarize, we have shown that $\rb{\forall y \in C}~ y \neq p \Rightarrow \scal{x}{y} < \scal{x}{p}$. Combining this, \cref{eq: KS-pC}, and \cref{eq: KS-edst}, we infer from \cref{lem: pil1}\cref{it:p11} that $P_{C}x = \{p\}$. This and \cref{lem: cpt}\cref{it: cpt2} yield the latter assertion, and the proof is complete.
	\end{proof}

Let us provide some examples.

\begin{corollary}[Projections onto circles]\label{cor: pr-cir}
	Let $V$ be a nonzero closed linear subspace of $\HH$, let $\rho \in \RR_{++}$, and set $C \coloneqq V \cap \sphere{0}{\rho}$. Then 
		\begin{equation}\label{eq: pr-cir}
			\rb{\forall x \in \HH} \quad P_{C}x = \begin{cases} 
				C, & \text{if~} x \in V^{\perp}; \medskip \\
				\displaystyle \left\{ \frac{\rho}{\norm{P_{V}x}}P_{V}x \right\}, & \text{otherwise}.
			\end{cases}
		\end{equation}
\end{corollary}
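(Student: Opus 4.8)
The plan is to obtain \cref{cor: pr-cir} as a direct specialization of \cref{thm: K-S} to the case in which the cone $K$ is the subspace $V$. First I would observe that a nonzero closed linear subspace $V$ of $\HH$ is in particular a nonempty closed convex cone satisfying $V \neq \{0\}$; hence \cref{thm: K-S} applies with $K \coloneqq V$, the same $\rho$, and $C = V \cap \sphere{0}{\rho}$.

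Next I would invoke the standard fact that the polar cone of a linear subspace coincides with its orthogonal complement, i.e., $\pc{V} = V^{\perp}$ (see, e.g., \cite[Example~6.9]{bauschke2017convex}). This identification is the one point that actually does any work here: it guarantees that the two regimes $x \in V^{\perp}$ and $x \in \HH \smallsetminus V^{\perp}$ exhaust $\HH$, and that they match respectively the hypotheses $x \in K^{\perp}$ of \cref{thm: K-S}\cref{it: KS1} and $x \in \HH \smallsetminus \pc{K}$ of \cref{thm: K-S}\cref{it: KS2} (for a general cone these two cases need not cover all of $\HH$, since only $K^{\perp} \subseteq \pc{K}$; equality holds precisely because $K = V$ is a subspace).

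Finally I would read off the conclusion. For $x \in V^{\perp} = K^{\perp}$, \cref{thm: K-S}\cref{it: KS1} gives $P_{C}x = C$. For $x \in \HH \smallsetminus V^{\perp} = \HH \smallsetminus \pc{K}$, \cref{thm: K-S}\cref{it: KS2} gives $P_{C}x = \{(\rho/\norm{P_{K}x})P_{K}x\}$, and since $K = V$ is a subspace we have $P_{K} = P_{V}$, so this is exactly $\{(\rho/\norm{P_{V}x})P_{V}x\}$. Assembling the two cases yields \cref{eq: pr-cir}. There is no substantive obstacle; the argument is a verbatim substitution into \cref{thm: K-S} once $\pc{V} = V^{\perp}$ is noted.
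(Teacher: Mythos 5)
Your proof is correct and is exactly the paper's argument: the paper's own proof reads ``Combine \cref{thm: K-S} and the fact that $V^{\ominus} = V^{\perp}$.'' Your additional remark that $\pc{V}=V^{\perp}$ is what makes the two cases of \cref{thm: K-S} exhaust $\HH$ is a worthwhile clarification, but the route is the same.
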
	
	
	\begin{proof}
		Combine  \cref{thm: K-S} and the fact that $V^{\ominus} = V^{\perp}$.
	\end{proof}

\begin{remark}
	Letting $V = \HH$ in \cref{cor: pr-cir}, we see that $C = \sphere{0}{\rho}$, that $V^{\perp} = \{0\}$, that $P_{V} = \Id$, and that \cref{eq: pr-cir} becomes 
		\begin{equation}
			\rb{\forall x \in \HH} \quad P_{C}x = \begin{cases}
				C, & \text{if~} x = 0;  \medskip \\
				\displaystyle \left\{ \frac{\rho}{\norm{x}}x \right\}, & \text{otherwise}.
			\end{cases}
		\end{equation}
	Hence, we recover the well-known formula for projectors onto spheres.
\end{remark}

\begin{example}\label{eg: pr-Cal}
 Let $\alpha \in \RR$ and $\beta \in \RR_{++}$, and set 
		\begin{equation}
			\b{S}_{\alpha,\beta} \coloneqq \sphere{0}{\beta} \times \{\alpha \}. 
		\end{equation}
	Then 
		\begin{equation}\label{eq: pr-Cal}
			\rb{\forall \bx = \rb{x,\xi} \in \b{\HH}}\quad P_{\b{S}_{\alpha,\beta}}\bx = \begin{cases}
			\b{S}_{\alpha,\beta}, & \text{if~} x =0; \medskip \\
			\displaystyle \left\{ \rb*{\frac{\beta}{\norm{x} }x,\alpha  } \right\}, & \text{otherwise.}
			\end{cases}
		\end{equation}
\end{example}

	\begin{proof}
		Set $\boldsymbol{V} \coloneqq \HH \times \{0\}$, which is a nonzero closed linear subspace of $\b{\HH}$ by \cref{H}. Let us first observe that 
			\begin{equation}\label{eq: hp-sum}
				\b{V} = \menge{\bx = \rb{x,\xi} \in \b{\HH} }{ \scal{\bx}{\rb{0,1} } = 0} = \{\rb{0,1}\}^{\perp},
			\end{equation}
		and thus,
			\begin{equation}\label{eq: Vperp}
				\rb{\forall \bx = \rb{x,\xi} \in \boldsymbol{\HH}} \quad \bx \in \boldsymbol{V}^{\perp} \iff \bx \in \RR \rb{0,1} \iff x=0.  
			\end{equation}
		Moreover, it is straightforward to verify that 
			\begin{equation}\label{eq: C0}
				\b{S}_{0,\beta} = \boldsymbol{V}\cap \dsphere{0}{\beta}.
			\end{equation}
		Now fix $\bx = \rb{x,\xi} \in \boldsymbol{\HH}$. Then,  appealing to \cite[Example 3.23]{bauschke2017convex} and \cref{eq: hp-sum}, we see that 
			\begin{math}
				P_{ \b{V} }\bx = \rb{x,0},
			\end{math}
		Combining this, \cref{eq: C0}, and \cref{eq: Vperp}, we deduce from \cref{cor: pr-cir} that 
		\begin{subequations} 
			\begin{align}
				P_{\b{S}_{ 0,\beta}}\bx = P_{\b{S}_{ 0,\beta}} \rb{x,\xi} 
				& = \begin{cases}
					\b{S}_{0,\beta}, & \text{if~} x =0; \medskip \\
					\displaystyle \left\{ \frac{\beta}{\norm{P_{\b{V}}\bx }} P_{\b{V}}\bx\right\}, & \text{otherwise}
				\end{cases} \\
				& = \begin{cases}
				\b{S}_{0,\beta}, & \text{if~} x =0; \medskip \\
				\displaystyle \left\{ \rb*{\frac{\beta}{\norm{x} }x,0 }\right\}, & \text{otherwise.}
				\end{cases} \label{eq: pr-C0}
			\end{align}
		\end{subequations}
	Consequently, since\footnote{As the reader can easily verify.} $\b{S}_{\alpha,\beta} = \rb{0,\alpha} + \b{S}_{0,\beta} $, we derive from \cref{eq: pr-C0} (applied to the point $\rb{x,\xi - \alpha}$)  that 
		\begin{subequations}
			\begin{align}
				P_{\b{S}_{\alpha,\beta}}\bx 
					& = \rb{0,\alpha} +P_{\b{S}_{0,\beta}}\rb{ \bx - \rb{0,\alpha} }  \\
					& = \rb{0,\alpha} +P_{\b{S}_{0,\beta}} \rb{x,\xi - \alpha}  \\
					& =\begin{cases}
						\rb{0,\alpha} + \b{S}_{0,\beta}, & \text{if~} x =0; \medskip \\
					\displaystyle \rb{0,\alpha} + \left\{ \rb*{\frac{\beta}{\norm{x} }x,0 }\right\}, & \text{otherwise}
					\end{cases} \\
					& = \begin{cases}
						 \b{S}_{\alpha,\beta}, & \text{if~} x =0; \medskip \\
					\displaystyle \left\{ \rb*{\frac{\beta}{\norm{x} }x,\alpha  }\right\}, & \text{otherwise,}
					\end{cases}
			\end{align}
		\end{subequations}
	as announced in \cref{eq: pr-Cal}.
	\end{proof}

Next, we turn to the more complicated case when the point to be
projected belongs to the polar cone.

	\begin{theorem}\label{thm: pK}
		Let $K$ be a  convex cone in $\HH$ such that $K \smallsetminus \{0\} \neq \varnothing$, let  $\rho$ be in $\RR_{++}$, and let $x \in \pc{K}$. Suppose that  there exists a nonempty subset $C$ of $K$ such that   
			\begin{equation}\label{eq: pK-eC}
				\left(\forall y \in C\right) \quad  \norm{y} = \rho
			\end{equation}
		and that 
			\begin{equation}\label{eq: genK}
				K = \pos{C}.
			\end{equation} 
		Set  
		\begin{equation}
			D \coloneqq  K \cap \sphere{0}{\rho} \quad \text{and} \quad \kappa \coloneqq \sup \scal{x}{C}.
		\end{equation}
		Then the following hold: 
			\begin{enumerate}
				\item\label{it: pK-1} Suppose that $P_{C}x = \varnothing$. Then $P_{D}x =\varnothing$.
				\item Suppose that $P_{C}x \neq \varnothing$, and set $E \coloneqq \sphere{0}{\rho} \cap \cone\rb{ \conv{P_{C}x} }$. Then the following hold: 
					\begin{enumerate}
						\item\label{it: pK21} $P_{C}x \subseteq P_{D}x \subseteq E$ and $\max \scal{x}{D} = \max \scal{x}{C}$.
						\item\label{it: pK22} Suppose that $\kappa < 0$. Then $P_{D}x = P_{C}x $.
						\item\label{it: pK23} Suppose that $\kappa =0$. Then $P_{D}x = E$.
					\end{enumerate}
				\item\label{it: pK-3} $P_{C}x \neq \varnothing \Leftrightarrow P_{D}x \neq \varnothing.$
			\end{enumerate}
	\end{theorem}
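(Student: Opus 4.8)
The plan is to obtain \cref{thm: pK} as the ``global'' counterpart of the pointwise statement \cref{lem: pil2}, applied throughout with $\beta = \rho$. First I would record two preliminaries. Since $x \in \pc{K}$ and $\varnothing \ne C \subseteq K$, we have $\kappa = \sup\scal{x}{C} \le \sup\scal{x}{K} \le 0$, and $\kappa \in \RR$ because $C \ne \varnothing$; thus $\kappa \in \left]-\infty,0\right]$. Next, every $u \in D$ satisfies $\scal{x}{u} \le \kappa$: by \cref{eq: genK}, $u \in D \subseteq K = \pos{C}$, so $u = \sum_{i \in I}\alpha_{i}x_{i}$ with $\{\alpha_{i}\}_{i\in I} \subseteq \RR_{+}$ and $\{x_{i}\}_{i\in I}\subseteq C$; since $\norm{u}=\rho$ and $\norm{x_{i}}=\rho$ for all $i$ by \cref{eq: pK-eC}, \cref{lem: pil2}\cref{it: p21} gives $\sum_{i\in I}\alpha_{i}\ge 1$, whence $\scal{x}{u}=\sum_{i\in I}\alpha_{i}\scal{x}{x_{i}}\le \kappa\sum_{i\in I}\alpha_{i}\le \kappa$ because $\kappa\le 0$. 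Combined with $C\subseteq D$ (if $y\in C$ then $y\in K$ and $\norm{y}=\rho$, so $y\in D$), this yields $\sup\scal{x}{D}=\kappa$. I will also use that $D\ne\varnothing$ (\cref{rem: clar}, as $K$ is a cone with $K\ne\{0\}$) and consists of vectors of equal norm $\rho$, so that \cref{lem: pil1}\cref{it:p11} characterizes membership in $P_{D}x$ via maximization of $\scal{x}{\cdot}$ over $D$.

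For the ``forward'' inclusions, fix an arbitrary $u\in P_{D}x$. Then $u\in D\subseteq K=\pos{C}$, $\norm{u}=\rho$, and $\scal{x}{u}=\max\scal{x}{D}\ge\sup\scal{x}{C}=\kappa$ (using $C\subseteq D$), so the hypotheses $\kappa\in\left]-\infty,0\right]$ and $\kappa\le\scal{x}{u}$ of \cref{lem: pil2}\cref{it: p22} hold. Its conclusions then give, in order: $P_{C}x\ne\varnothing$ and $\scal{x}{u}=\max\scal{x}{C}=\kappa$ (so $P_{D}x\ne\varnothing\Rightarrow P_{C}x\ne\varnothing$, which is the contrapositive of \cref{it: pK-1} and the implication ``$\Leftarrow$'' of \cref{it: pK-3}); $u\in E$, so $P_{D}x\subseteq E$; and, when $\kappa<0$, $u\in P_{C}x$, so $P_{D}x\subseteq P_{C}x$ in that case.

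For the ``reverse'' inclusions, assume $P_{C}x\ne\varnothing$. If $p\in P_{C}x$, then $p\in C\subseteq D$ and $\scal{x}{p}=\max\scal{x}{C}=\kappa=\sup\scal{x}{D}$, so $p\in P_{D}x$ by \cref{lem: pil1}\cref{it:p11}; hence $\varnothing\ne P_{C}x\subseteq P_{D}x$, which together with $\scal{x}{p}=\max\scal{x}{D}=\max\scal{x}{C}$ completes \cref{it: pK21}, supplies the remaining implication ``$\Rightarrow$'' of \cref{it: pK-3}, and, combined with $P_{D}x\subseteq P_{C}x$ from the previous paragraph, gives $P_{D}x=P_{C}x$ when $\kappa<0$, i.e.\ \cref{it: pK22}. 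For \cref{it: pK23}, suppose $\kappa=0$; it remains to prove $E\subseteq P_{D}x$. Take $u\in E$. Since $u\in\cone(\conv P_{C}x)=\bigcup_{\lambda\in\RR_{++}}\lambda\,\conv(P_{C}x)$ by \cref{fact: cone}\cref{it: cone1}, we may write $u=\sum_{i\in I}\mu_{i}p_{i}$ with $\{\mu_{i}\}_{i\in I}\subseteq\RR_{+}$ and $\{p_{i}\}_{i\in I}\subseteq P_{C}x\subseteq C$; hence $u\in\pos{C}=K$ and, as $\norm{u}=\rho$, $u\in D$. Moreover $\scal{x}{p_{i}}=\max\scal{x}{C}=\kappa=0$ for every $i$, so $\scal{x}{u}=\sum_{i\in I}\mu_{i}\scal{x}{p_{i}}=0=\sup\scal{x}{D}$, whence $u\in P_{D}x$ by \cref{lem: pil1}\cref{it:p11}.

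I do not anticipate a genuine obstacle: the theorem is essentially \cref{lem: pil2} invoked once at an arbitrary point of $P_{D}x$, glued to the equal-norm characterization \cref{lem: pil1}\cref{it:p11} and the elementary bound $\scal{x}{\cdot}\le\kappa$ on $D$. The only point demanding care is organizing the case split on the sign of $\kappa$ and checking that in \cref{it: pK23} the hypothesis $\kappa=0$ is used precisely where it matters --- it is what forces $\scal{x}{u}$ to vanish for \emph{every} element of $\cone(\conv P_{C}x)$, independently of the conical scaling, so that the full set $E$ (not merely $\conv P_{C}x$) lands in $P_{D}x$.
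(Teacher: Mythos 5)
Your proposal is correct and follows essentially the same route as the paper: both arguments hinge on \cref{lem: pil2} (applied with $\beta=\rho$ at an arbitrary element of $P_{D}x$ for the forward inclusions) together with the equal-norm characterization \cref{lem: pil1}\cref{it:p11}, with the same case split on the sign of $\kappa$. The only difference is organizational — you establish $\sup\scal{x}{D}=\kappa$ unconditionally as a preliminary, whereas the paper derives $\max\scal{x}{D}=\max\scal{x}{C}$ inside the proof of \cref{it: pK21} — and this does not change the substance of the argument.
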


\begin{proof}
	We start with a few observations. First, since $K\neq \{0\}$ by assumption, it follows from \cref{rem: clar} that  $D \neq \varnothing$. Next, in view of \cref{eq: pK-eC} and the assumption that $C\subseteq K$, we have  
		\begin{equation}\label{eq: pK-kaD}
			C\subseteq D.
		\end{equation}
	In turn, because $x \in \pc{K}$, we get from \cref{eq: genK} and \cref{lem: cone}\cref{it: c01} that 
			\begin{equation}\label{eq: pK-ka0}
				\kappa 
					 \leq 0.
			\end{equation}
	Finally, by the very definition of $D$, we see that 
		\begin{equation}\label{eq: pK-nrmD}
			\text{the vectors in $D$ are of equal norm.}
		\end{equation}
	
	\cref{it: pK-1}: We  prove the contrapositive and therefore assume that there exists 
		\begin{equation}\label{eq: pK-ctp}
			u \in P_{D}x.		
		\end{equation} 
	Then, by \cref{eq: pK-kaD}, \cref{eq: pK-nrmD}, \cref{eq:
	pK-ctp}, and  \cref{lem: pil1}\cref{it:p11}, we obtain  
		\begin{equation}\label{eq: pK-kaxu}
			\kappa  = \sup \scal{x}{C} \leq \sup \scal{x}{D} = \scal{x}{u}.
		\end{equation}
     In turn, combining  \cref{eq: pK-eC}, \cref{eq: pK-ka0}, 
     \cref{eq: pK-kaxu}, and the fact that 
		\begin{math}
			u \in D  = \rb{\pos{C}} \cap \sphere{0}{\rho},
		\end{math}
	we infer from  \cref{lem: pil2}\cref{it: p22a} that $P_{C}x \neq \varnothing$. 
	
	\cref{it: pK21}: Let us first prove that $P_{C}x \subseteq P_{D}x$ and that $\max \scal{x}{D} = \max \scal{x}{C}$. To this end, take $u \in P_{C}x$ and $y \in D$. Then,  because $y \in D \subseteq \pos{C}$, there exist finite sets $\set{\alpha_{i}}{ i\in I } \subseteq \RR_{+}$  and $\set{x_{i}}{i\in I} \subseteq C$ such that $y = \sum_{i\in I} \alpha_{i} x_{i}$. In turn, on the one hand, since $\norm{y} = \rho$, we infer from \cref{eq: pK-eC} and \cref{lem: pil2}\cref{it: p21} that $\sum_{i \in I}\alpha_{i} \geq 1$. On the other hand, since $u \in P_{C}x$, it follows from \cref{eq: pK-eC} and \cref{lem: pil1}\cref{it:p11} that  
		\begin{equation}\label{eq: xu-maxC}
			\scal{x}{u} =\max \scal{x}{C} = \kappa .
		\end{equation}
	So altogether, since $\rb{\forall i \in I}~x_{i} \in C$, using \cref{eq: pK-ka0}, we see that 
		\begin{equation}\label{eq: xu-maxD}
			\scal{x}{y} = \sum_{i \in I} \alpha_{i} \scal{x}{x_{i}} \leq \sum_{i \in I} \alpha_{i} \kappa = \kappa \sum_{i\in I}\alpha_{i} \leq \kappa = \scal{x}{u}. 
		\end{equation}
	Therefore, since  $u \in C \subseteq D$ by \cref{eq: pK-kaD}, we derive from \cref{eq: xu-maxD} and \cref{eq: xu-maxC} that   
			\begin{equation}\label{eq: xuMaD}
				\max \scal{x}{D}
				  = \scal{x}{u}  \\
				  = \max \scal{x}{C}. 
			\end{equation}
	Also, appealing to \cref{eq: xuMaD} and \cref{eq: pK-nrmD}, we get from \cref{lem: pil1}\cref{it:p11} that $u \in P_{D}x$, as desired. It now remains to establish the inclusion $P_{D}x \subseteq E$. To do so, fix $v \in P_{D}x$. Then, in view of \cref{eq: pK-nrmD}, \cref{lem: pil1}\cref{it:p11} and \cref{eq: xuMaD}\&\cref{eq: xu-maxC}\&\cref{eq: pK-ka0} assert that  
		\begin{equation}\label{eq: pK-v1}
			\scal{x}{v} = \max \scal{x}{D} = \max \scal{x}{C} \leq 0.
		\end{equation}
	Thus, since  
		\begin{equation}\label{eq: pK-v2}
			v \in D = \rb*{\pos{C} } \cap \sphere{0}{\rho},
		\end{equation}
	it follows from \cref{eq: pK-eC} and \cref{lem: pil2}\cref{it: p22b} that $v \in \sphere{0}{\rho} \cap \cone \rb{\conv{P_{C}x } } = E$, as claimed.
	
	\cref{it: pK22}: Consider the element $v \in P_{D}x$ of the proof of \cref{it: pK21}. Combining \cref{eq: pK-eC}\&\cref{eq: pK-v1}\&\cref{eq: pK-v2} and the assumption that $\kappa < 0$, we derive from \cref{lem: pil2}\cref{it: p22c} that $v \in P_{C}x$, and hence, $P_{D}x \subseteq P_{C}x$. Consequently, since $P_{C}x \subseteq P_{D}x$ by \cref{it: pK21}, the assertion follows. 
	
	\cref{it: pK23}: According to \cref{it: pK21}, it suffices to show that $E\subseteq P_{D}x$. Towards this end, take $ w \in E$ and $y \in D$. By the very definition of $E$, there exist
	finite sets $\set{\beta_{j}}{j \in J} \subseteq \RR_{++}$ and $\set{x_{j}}{j \in J} \subseteq P_{C}x$ such that $w =  \sum_{j \in J} \beta_{j} x_{j}$. In turn, since $\set{x_{j}}{j \in J} \subseteq P_{C}x$, we get from \cref{eq: pK-eC} and \cref{lem: pil1}\cref{it:p11} that  $\rb{\forall j \in J}~ \scal{x}{x_{j}} = \kappa = 0$, from which and \cref{eq: xuMaD} it follows that 
		\begin{equation}
			\scal{x}{w} =  \sum_{j \in J} \beta_{j} \scal{x}{x_{j}} =0 = \kappa = \max \scal{x}{D}.
		\end{equation}
	Consequently, since $w \in E \subseteq D$ by the very definitions of $E$ and $D$, invoking  \cref{eq: pK-nrmD} and \cref{lem: pil1}\cref{it:p11} once more, we conclude that $w \in P_{D}x$, as required.
	
	\cref{it: pK-3}: Combine \cref{it: pK-1} and \cref{it: pK21}.
\end{proof}

We are now ready for the main result of this section which
provides a formula for the projector of a finitely generated cone
and a sphere.

\begin{corollary}[cone intersected with sphere]
\label{cor: fi-KS}
	Let $\set{x_{i}}{i \in I}$ be a nonempty finite subset of $\HH$, let $\rho \in \RR_{++}$, and let $x \in \HH$. Set 
		\begin{equation}
			K \coloneqq \sum_{i \in I} \RR_{+} x_{i}, \quad C \coloneqq K \cap \sphere{0}{\rho}, \quad \kappa \coloneqq \max_{i \in I} \scal{x}{x_{i}}, \quad \text{and~} I\rb{x} \coloneqq \menge{ i \in I }{ \scal{x}{x_{i}} = \kappa }.
		\end{equation}
	Suppose that $\rb{\forall i \in I}~ \norm{x_{i}} = \rho$. Then 
		\begin{equation}
			P_{C}x = \begin{cases}
				\displaystyle \left\{ \frac{\rho}{ \norm{P_{K}x } } P_{K}x \right\}, & \text{if~} \kappa >0; \medskip \\
				\sphere{0}{\rho} \cap \cone \rb*{\conv \set{x_{i}}{i\in I\rb{x}  }}, & \text{if~} \kappa =0; \medskip \\
				\set{x_{i}}{i\in I\rb{x} }, & \text{if~} \kappa <0.
			\end{cases}
		\end{equation}
\end{corollary}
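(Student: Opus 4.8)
The plan is to reduce everything to the two main theorems: \cref{thm: K-S} handles the case $x\notin\pc{K}$, while \cref{thm: pK} handles $x\in\pc{K}$, and I will show that the trichotomy on the sign of $\kappa$ matches this dichotomy. To keep the notation straight, write $C_{0}\coloneqq\set{x_{i}}{i\in I}$, reserving $C$ for the set $K\cap\sphere{0}{\rho}$ of the statement. First I would collect the preliminaries. By \cref{eg: fi-cone}, $K=\sum_{i\in I}\RR_{+}x_{i}$ is a nonempty closed convex cone, and since $\norm{x_{i}}=\rho>0$ forces $x_{i}\neq 0$, we have $K\neq\{0\}$; by \cref{eq: Minsum}, $K=\pos{C_{0}}$, and by hypothesis every vector of $C_{0}$ has norm $\rho$. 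Since $C_{0}$ is finite, $\scal{x}{\cdot}$ attains its maximum $\kappa$ over $C_{0}$; as the vectors of $C_{0}$ have equal norm, \cref{lem: pil1}\cref{it:p11} then gives $P_{C_{0}}x=\menge{p\in C_{0}}{\scal{x}{p}=\kappa}=\set{x_{i}}{i\in I(x)}\neq\varnothing$. Finally, \cref{lem: cone}\cref{it: c01} applied to $C_{0}$ shows $x\in\pc{K}\iff\sup\scal{x}{C_{0}}\leq 0\iff\kappa\leq 0$.

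With these in hand the three cases are quick. If $\kappa>0$, then $x\in\HH\smallsetminus\pc{K}$, so \cref{thm: K-S}\cref{it: KS2} applies and yields $P_{C}x=\{(\rho/\norm{P_{K}x})P_{K}x\}$, the first branch. If $\kappa\leq 0$, then $x\in\pc{K}$, and I would invoke \cref{thm: pK} with its set ``$C$'' taken to be $C_{0}$ and its set ``$D$'' being our $C=K\cap\sphere{0}{\rho}$: hypotheses \cref{eq: pK-eC} and \cref{eq: genK} hold by the preliminaries, and $P_{C_{0}}x\neq\varnothing$. When $\kappa=0$, part \cref{it: pK23} gives $P_{C}x=\sphere{0}{\rho}\cap\cone\rb{\conv P_{C_{0}}x}=\sphere{0}{\rho}\cap\cone\rb{\conv\set{x_{i}}{i\in I(x)}}$, the second branch; when $\kappa<0$, part \cref{it: pK22} gives $P_{C}x=P_{C_{0}}x=\set{x_{i}}{i\in I(x)}$, the third branch.

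I do not anticipate a genuine obstacle here, since the corollary is essentially a bookkeeping specialization of \cref{thm: K-S,thm: pK}. The only points that need care are the notational overloading of $C$ between the statement and \cref{thm: pK}, the identification $P_{C_{0}}x=\set{x_{i}}{i\in I(x)}$, which relies on the vectors $x_{i}$ having the common norm $\rho$, and the verification that $\kappa\leq 0$ is exactly the condition $x\in\pc{K}$, so that the three sign-cases of $\kappa$ really do partition $\HH$ in accordance with the $x\in\pc{K}$ versus $x\notin\pc{K}$ split exploited by the two theorems.
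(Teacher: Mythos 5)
Your proposal is correct and follows essentially the same route as the paper's own proof: establish that $K$ is a nonempty closed convex cone via \cref{eg: fi-cone}, identify $P_{\set{x_{i}}{i\in I}}x=\set{x_{i}}{i\in I(x)}$ via \cref{lem: pil1}\cref{it:p11}, use \cref{lem: cone}\cref{it: c01} to translate the sign of $\kappa$ into the dichotomy $x\in\pc{K}$ versus $x\notin\pc{K}$, and then dispatch the three cases with \cref{thm: K-S}\cref{it: KS2} and \cref{thm: pK}\cref{it: pK22}\&\cref{it: pK23}. No gaps.
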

	
	\begin{proof}
		Set $X \coloneqq \set{x_{i}}{i \in I}$. First, it follows from \cref{eg: fi-cone} that $K$ is a nonempty closed convex cone. In addition,  \cref{lem: cone}\cref{it: c01} (applied to $\set{x_{i}}{ i\in I} $) implies that 
			\begin{equation}\label{eq: ka-polar}
				x \in \pc{K}  \iff \kappa = \max_{i \in I} \scal{x}{x_{i}} \leq 0.
			\end{equation}
		Next, due to our assumption,  \cref{lem: pil1}\cref{it:p11} yields  
			\begin{equation}\label{eq: PX}
				P_{X}x = \set{x_{i}}{ i \in I\rb{x} } \neq \varnothing.
			\end{equation}
		Let us now identify $P_{C}x$ in each of the following conceivable cases: 
			
			\hspace{\parindent}(A) $\kappa >0$: Then, by \cref{eq: ka-polar}, we have $x \in \HH\smallsetminus \pc{K}$, and hence, \cref{thm: K-S}\cref{it: KS2} asserts that $P_{C}x = \left\{ \rb{\rho / \norm{ P_{K}x }  }  P_{K}x \right\}$.
			
			\hspace{\parindent}(B) $\kappa =0$: Using \cref{thm: pK}\cref{it: pK23} (with the set $C$  being $X=\set{x_{i}}{i\in I}$) and \cref{eq: PX},  we obtain 
				\begin{math}
					P_{C}x = \sphere{0}{\rho}  \cap \cone \rb{\conv \set{x_{i}}{i\in I \rb{x} }  }.
				\end{math}
				
			\hspace{\parindent}(C) $\kappa < 0$: Invoking \cref{thm: pK}\cref{it: pK22} and \cref{eq: PX}, we immediately have $P_{C}x = \set{x_{i} }{i \in I \rb{x} }$.
	\end{proof}

\begin{remark}
	Consider the setting of \cref{cor: fi-KS}. Since 
		\begin{math}
			\set{x_{i}}{i\in I \rb{x} } \subseteq \sphere{0}{\rho} \cap \cone \rb{\conv \set{x_{i}}{i\in I\rb{x} }  }
		\end{math}
 	 by the assumption that $\norm{x_{i}} \equiv \rho$, we see that 
		\begin{equation}
			\operatorname{s} \colon \HH \to \HH : x \mapsto \begin{cases}
				\displaystyle \frac{\rho}{\norm{P_{K}x }} P_{K}x, & \text{if~} \displaystyle  \max_{ i\in I} \scal{x}{x_{i}} >0; \medskip \\
				\operatorname{s}\rb{x} \in \set{ x_{i} }{ i \in I \rb{x} }, & \text{otherwise}
			\end{cases}
		\end{equation}
	is a selection of $P_{C}$.
\end{remark}

\begin{example}\label{eg: eg-ortho}
	Consider the setting of \cref{thm: pr-K-ei}. Set
	 \begin{equation}
		 C\coloneqq K \cap \sphere{0}{ 1 }, ~ \kappa \coloneqq \max_{i \in I} \scal{x}{e_{i}}, ~ I\rb{x} \coloneqq \menge{ i \in I }{ \scal{x}{e_{i}} = \kappa }, ~ \text{and~} \lambda \coloneqq \sqrt{ \sum_{i\in I} \rb{\max \left\{\scal{x}{e_{i}} ,0 \right\} }^{2}}.
	 \end{equation}
	 Then 
	 	\begin{equation}\label{eq: eg-ei}
		 	P_{C}x = \begin{cases}
			 	\displaystyle \left\{ \lambda^{-1} \sum_{i \in I}\max \left\{\scal{x}{e_{i}},0\right\}  e_{i}  \right\}, & \text{if~} \kappa >0; \medskip \\
			 	\displaystyle \menge*{ \sum_{i \in I\rb{x} } \alpha_{i} e_{i} }{ \set{\alpha_{i}  }{i \in I \rb{x} }\subseteq \RR_{+} \text{~such that~} \sum_{i \in I\rb{x}}\alpha_{i}^{2} =1 }, & \text{if~} \kappa =0; \medskip \\
			 	\set{e_{i}}{ i \in I\rb{x}}, &\text{if~} \kappa < 0.
		 	\end{cases}
	 	\end{equation}
\end{example}
	
	\begin{proof}
		Since \begin{math}
				P_{K}x = \sum_{i \in I} \max\left\{\scal{x}{e_{i}},0 \right\} e_{i}
		\end{math}
	by \cref{eq: pr-ei}, we obtain 
		\begin{equation}\label{eq: eg-nPK}
			\norm{ P_{K}x }^{2} = \norm*{ \sum_{i \in I} \max\left\{\scal{x}{e_{i}},0 \right\} e_{i}}^{2} = \sum_{i\in I} \rb{\max \left\{\scal{x}{e_{i}} ,0 \right\} }^{2} = \lambda^{2} .
		\end{equation}
	 Next, let us show that 
		\begin{equation}\label{eq: eg-do}
			\sphere{0}{1} \cap \cone \rb*{\conv \set{e_{i}}{i\in I\rb{x}} } = \menge*{ \sum_{i \in I\rb{x}} \alpha_{i} e_{i} }{ \set{\alpha_{i}  }{i \in I\rb{x}} \subseteq \RR_{+} \text{~such that~} \sum_{i \in I\rb{x}}\alpha_{i}^{2} =1 }.
		\end{equation}
	To this end, denote the set on the right-hand side of \cref{eq: eg-do} by $D$. Take $y \in \sphere{0}{1} \cap \cone \rb{\conv \set{e_{i}}{i\in I\rb{x}} }$. Then there exist $\lambda \in \RR_{++}$ and $\set{ \alpha_{i} }{ i\in I\rb{x} } \subseteq \RR_{+}$ such that $y = \lambda \sum_{i \in I\rb{x}} \alpha_{i} e_{i}  = \sum_{i \in I\rb{x} } \rb{\lambda \alpha_{i}}e_{i} $. Furthermore, since $\set{e_{i}}{i \in I\rb{x} }$ is an orthonormal set, we get 
		\begin{math}
			1 = \norm{y}^{2} = \norm{ \sum_{i \in I\rb{x}}\rb{\lambda \alpha_{i}} e_{i} }^{2} = \sum_{i \in I\rb{x}} \rb{\lambda \alpha_{i}}^{2}.
		\end{math}
	Hence $y \in D$. Conversely, fix $z \in D$, say $z = \sum_{i \in I\rb{x}} \beta_{i} e_{i}$, where $\set{\beta_{i} }{i \in I\rb{x}} \subseteq \RR_{+}$ satisfying $\sum_{i \in I\rb{x}}  \beta_{i}^{2} =1 $, and set $\beta \coloneqq  \sum_{i \in I\rb{x}} \beta_{i}$. It is clear that $\beta >0$, and therefore, 
		\begin{math}
			z = \beta \sum_{i \in I\rb{x}} \rb{\beta_{i} / \beta }e_{i} \in \cone \rb{\conv \set{e_{i}}{i\in I\rb{x}} } .
		\end{math}
	In turn, because $\norm{z}^{2} = \sum_{i \in I\rb{x}} \beta_{i}^{2} =1$, it follows that $z \in \sphere{0}{1} \cap \cone \rb{\conv \set{e_{i}}{i\in I\rb{x}} }$. Thus \cref{eq: eg-do} holds. Consequently, using \cref{eq: pr-ei}\&\cref{eq: eg-nPK}\&\cref{eq: eg-do}, we obtain \cref{eq: eg-ei} via \cref{cor: fi-KS}. 
	\end{proof}

The following nice result was mentioned in \cite[Example 5.5.2 and
Problem 5.6.14]{lange2016mm}.

\begin{example}[Lange] \label{eg: Lange}
	Suppose that $\HH= \RR^{N}$, that $I = \left\{ 1,\ldots,N\right\}$,
	and that $\fa{ e_{i}}{ i\in I} $ is the canonical orthonormal
	basis of $\HH$.  Set 
\begin{equation}
K \coloneqq \RR_{+}^{N}\quad\text{and}\quad
C \coloneqq K \cap \sphere{0}{1}. 
\end{equation}
Now let $x =
	\fa{\xi_{i}}{i \in I} \in \HH$; set  $\kappa \coloneqq
	\max_{i \in I}\xi_{i}$,   $I \rb{x} \coloneqq \menge{ i \in
	I }{ \xi_{i} =\kappa }$, and  $x_{+}\coloneqq \fa{\max
	\left\{\xi_{i},0 \right\} }{ i \in I }$. Then
		\begin{equation}\label{eq: eg-RN}
		P_{C}x = \begin{cases}
		\displaystyle \left\{ \frac{1}{\norm{x_{+} } }x_{+} \right\}, & \text{if~} \kappa >0; \medskip \\
		\displaystyle \menge*{ \sum_{i \in I\rb{x}} \alpha_{i} e_{i} }{ \set{\alpha_{i}  }{i \in I\rb{x}} \subseteq \RR_{+} \text{~such that~} \sum_{i \in I\rb{x}}\alpha_{i}^{2} =1 }, & \text{if~} \kappa =0; \medskip \\
		\set{e_{i}}{ i \in I\rb{x}}, &\text{if~} \kappa < 0.
		\end{cases}
		\end{equation}
\end{example}
	
	\begin{proof}
		Because $\rb{\forall i \in I}~ \scal{x}{e_{i}} = \xi_{i}$ and $\norm{x_{+}}^{2} = \sum_{i \in I}\rb{\max \left\{\xi_{i},0 \right\}}^{2} $, \cref{eq: eg-RN} therefore follows from \cref{eg: eg-ortho}.
	\end{proof}

\section{Further examples}

\label{s:fex}

In this section, we provide further examples based on the Lorentz
cone and on the cone of positive semidefinite matrices.

\begin{example}
	Let $\alpha$ and $\rho$ be in $\RR_{++}$, let  
		\begin{equation}\label{eq: Lor-cone}
			\bK_{\alpha} = \menge*{ \rb{x,\xi} \in \HH \oplus \RR }{ \norm{x} \leq \alpha \xi } 
		\end{equation}
	be the Lorentz cone of parameter $\alpha$ of \cref{eg: ic-cone},  set $\b{C} \coloneqq \b{K}_{\alpha}\cap \dsphere{0}{\rho}$, and let $\bx =\rb{x,\xi} \in \b{\HH}$. Then 
		\begin{equation}\label{eq: pr-LC}
			P_{\bC}\bx = \begin{cases}
			\displaystyle \left\{\frac{\rho }{\norm{\bx}}\bx \right\}, & \text{if~} \norm{x} \leq \alpha \xi \text{~and~} \xi >0; \medskip  \\
			\displaystyle \left\{ \frac{\rho }{\sqrt{1+\alpha^{2}}} \rb*{\frac{\alpha x}{\norm{x} },1 } \right\}, & \text{if~} \norm{x} > \max \{ \alpha\xi , -\xi/\alpha \} \text{~or~} \left[\, x\neq 0 \text{~and~} \norm{x} \leq -\xi /\alpha \,\right]; \medskip \\
			\displaystyle \sphere{0}{\beta} \times \{\beta/\alpha\}, & \text{if~} x =0 \text{~and~} \xi < 0; \medskip \\
			\bC,& \text{if~} \rb{x,\xi } = \rb{0,0}.				
			\end{cases}
		\end{equation}
\end{example}

	\begin{proof}
		 Set 
		 	\begin{equation}\label{eq: ma-beta}
			 	\beta \coloneqq \frac{\rho \alpha }{ \rb{1+\alpha^{2}}^{1/2} } \in \RR_{++},
		 	\end{equation}
		 $\b{C}_{\alpha,\beta} \coloneqq \sphere{0}{\beta} \times \{\beta/\alpha\},\text{~and~} \kappa \coloneqq \max \scal{\bx}{\b{C}_{\alpha,\beta} }$. Then  it is readily verified that 
			\begin{equation}\label{eq: nrm-Cab}
				\rb{\forall \by \in \b{C}_{\alpha,\beta} } \quad \norm{\by} = \rho, 
			\end{equation}
		and due to \cref{lem: maxSph},  
			\begin{equation}\label{eq: ma-kappa}
				\kappa = \beta \norm{x} + \xi \beta / \alpha.
			\end{equation}
		Furthermore,  by \cref{eg: ic-cone}, 
			\begin{equation}\label{eq: g-Ka}
				\bK_{\alpha} =\pos{\bC_{\alpha,\beta}} = \cone\rb{\conv {\bC_{\alpha,\beta}} } \cup \{\bzero \},
			\end{equation}
		and by \cref{eg: pr-Cal} (applied to $\b{C}_{\alpha,\beta}$), we have 
			\begin{equation}\label{eq: PCab0} 
				\varnothing  \neq  P_{\b{C}_{\alpha,\beta}}\bx   
				 = \begin{cases}
				\b{C}_{\alpha,\beta}, & \text{if~} x =0; \medskip \\
				\displaystyle \left\{ \rb*{\frac{\beta}{\norm{x} }x, \frac{\beta}{\alpha}  }\right\}, & \text{otherwise.}
				\end{cases} 
			\end{equation}
		Let us now identify $P_{\bC}\bx$ in the following conceivable cases: 
			
			\hspace{\parindent}(A) $\norm{x} > -\xi /\alpha$: Then $\kappa > 0$ by \cref{eq: ma-kappa}, and so by \cref{eq: g-Ka} and \cref{lem: cone}\cref{it: c01}, $\bx \in \b{\HH} \smallsetminus \b{K}_{\alpha}^{\ominus}$. In turn, it follows from \cref{thm: K-S}\cref{it: KS2} (applied to $\bC = \b{K}_{\alpha}\cap \dsphere{0}{\rho}$) that  
				\begin{equation}\label{eq: ic-PC}
					P_{\b{C} }\bx = \left\{  \frac{\rho}{\norm{ P_{ \b{K}_{\alpha}} \bx } }   P_{\b{K}_{\alpha}}\bx \right\}.
				\end{equation}
			To evaluate $P_{\b{C}}\bx$ further, we consider two subcases: 
				
				\hspace{\parindent}\hspace{\parindent}(A.1) $\norm{x} \leq \alpha \xi$: Then $\bx \in \b{K}_{\alpha}$ by \cref{eq: Lor-cone}, and so  $P_{\b{K}_{\alpha}}\bx = \bx$, which yields $P_{\b{C} }\bx = \left\{ \rb{\rho / \norm{\bx}} \bx \right\} $.
				
				\hspace{\parindent}\hspace{\parindent}(A.2) $\norm{x} > \alpha \xi$: Then, according to \cite[Exercise 29.11]{bauschke2017convex}, 
					\begin{equation}\label{eq: Lc-A2a}
						P_{\bK_{\alpha} }\bx = P_{\bK_{\alpha} }\rb{x,\xi} = \frac{\alpha\norm{x}+\xi  }{1+\alpha^{2}}\rb*{\frac{\alpha x}{\norm{x}},1},
					\end{equation}
				and since $\alpha \norm{x} + \xi >0$, it follows that 
					\begin{equation}\label{eq: Lc-A2b}
						\norm{ P_{ \bK_{\alpha} } \bx } = \frac{\alpha\norm{x}+\xi  }{1+\alpha^{2}} \norm*{ \rb*{\frac{\alpha x}{\norm{x}},1} }= \frac{\alpha\norm{x}+\xi  }{1+\alpha^{2}} \sqrt{ \norm*{\frac{\alpha x}{\norm{x} }}^{2} +1 } = \frac{\alpha \norm{x} +\xi }{ \sqrt{1+\alpha^{2} } }.
					\end{equation}
				Hence, combining \cref{eq: ic-PC}\&\cref{eq: Lc-A2a}\&\cref{eq: Lc-A2b}, we get 
					\begin{equation}
						P_{\b{C}}\bx = \left\{\frac{\rho}{\sqrt{1+\alpha^{2}}} \rb*{ \frac{\alpha x}{\norm{x}},1 } \right\}.
					\end{equation}
			
			\hspace{\parindent}(B) $\norm{x} = -\xi /\alpha$: Then $\kappa = 0$ by \cref{eq: ma-kappa}, and  invoking \cref{eq: nrm-Cab}\&\cref{eq: g-Ka}\&\cref{eq: PCab0}, \cref{thm: pK}\cref{it: pK23} asserts that 
				\begin{equation}\label{eq: Lc-BPC}
					P_{\b{C}}\bx = \dsphere{0}{\rho} \cap \cone\rb{ \conv{ P_{\b{C}_{\alpha,\beta} } \bx } }.
				\end{equation}
			We consider two subcases: 
				
				\hspace{\parindent}\hspace{\parindent}(B.1) $x =0$: Then $\xi = 0$ and so $\bx = \rb{x,\xi} = 0$. Moreover, due to \cref{eq: PCab0},  $P_{\b{C}_{\alpha,\beta} }\bx  = \b{C}_{\alpha,\beta} $. Therefore, by \cref{eq: g-Ka} and \cref{eq: Lc-BPC},  
				\begin{subequations}
					\begin{align}
						\bC & = \bK_{\alpha} \cap \dsphere{0}{\rho}  \\
						& = \rb{\cone\rb{\conv {\bC_{\alpha,\beta}} } \cup \{\bzero \}} \cap \dsphere{0}{\rho} \\
						& = \cone\rb{\conv {\bC_{\alpha,\beta}} } \cap \dsphere{0}{\rho} \\ 
						& = \cone\rb{\conv { P_{\b{C}_{\alpha,\beta} }\bx   }} \cap \dsphere{0}{\rho} \\
						& = P_{\bC}\bx.
					\end{align}
				\end{subequations}
			
				\hspace{\parindent}\hspace{\parindent}(B.2) $x  \neq 0$: Then  \cref{eq: PCab0} yields $P_{\bC_{\alpha,\beta}} \bx = \left\{ \rb{ \beta x /\norm{x}, \beta / \alpha }\right\}$. In turn, since  $\norm{ \rb{\beta x /\norm{x}, \beta /\alpha } } =\rho$ by \cref{eq: ma-beta} and a simple computation, we obtain from \cref{eq: Lc-BPC} and  \cref{fact: cone}\cref{it: cone1} that  
				\begin{subequations}
					\begin{align}
						P_{\b{C}}\bx 
							& = \dsphere{0}{\rho} \cap \cone\rb{ \conv{ P_{\b{C}_{\alpha,\beta} } \bx } } \\
							& = \dsphere{0}{\rho} \cap \rb*{ \RR_{++} \rb*{ \frac{\beta x}{\norm{x}}, \frac{\beta}{\alpha} }  }  \\
							& =  \left\{  \rb*{ \frac{ \beta x}{\norm{x}} , \frac{\beta}{ \alpha }} \right\} \\
							& = \left\{  \frac{\beta}{\alpha} \rb*{ \frac{\alpha x}{\norm{x}},1 } \right\} \\
							& = \left\{ \frac{\rho }{\sqrt{1+\alpha^{2}}} \rb*{\frac{\alpha x}{\norm{x} },1 } \right\}.
					\end{align}
				\end{subequations} 
			
			\hspace{\parindent}(C) $\norm{x} < -\xi /\alpha$: Then $\kappa < 0$ by \cref{eq: ma-kappa}, and so, in view of \cref{eq: nrm-Cab}\&\cref{eq: g-Ka}\&\cref{eq: PCab0}, we deduce from \cref{thm: pK}\cref{it: pK22} that  $P_{\bC}\bx = P_{\bC_{\alpha,\beta} } \bx $.  Hence, by \cref{eq: PCab0} and \cref{eq: ma-beta}, we get 
				\begin{subequations}
					\begin{align}
						P_{\bC}\bx   & = \begin{cases}
						\b{C}_{\alpha,\beta}, & \text{if~} x =0; \medskip \\
						\displaystyle \left\{ \rb*{\frac{\beta}{\norm{x} }x, \frac{\beta}{\alpha}  }\right\}, & \text{if~} x\neq 0
						\end{cases}  \\
						& = \begin{cases}
						\b{C}_{\alpha,\beta}, & \text{if~} x =0; \medskip \\
						\displaystyle \left\{ \frac{\rho }{\sqrt{1+\alpha^{2}}} \rb*{\frac{\alpha x}{\norm{x} },1 } \right\}, & \text{if~} x\neq 0.
						\end{cases} 
					\end{align}
				\end{subequations}
			
			To sum up, we have shown that 
			\begin{subequations} 
				\begin{align}
					P_{\bC}\bx 
					 & =  \begin{cases}
					 		\displaystyle \left\{\frac{\rho }{\norm{\bx}}\bx \right\}, & \text{if~} - \xi /\alpha < \norm{x} \leq \alpha \xi; \medskip \\
							\displaystyle \left\{ \frac{\rho }{\sqrt{1+\alpha^{2}}} \rb*{\frac{\alpha x}{\norm{x} },1 } \right\}, & \text{if~} \norm{x} > \max \{ \alpha\xi , -\xi/\alpha \} \text{~or~} \left[\, x\neq 0 \text{~and~} \norm{x} \leq -\xi /\alpha \,\right]; \medskip \\
							\displaystyle \bC_{\alpha,\beta }, & \text{if~} x =0 \text{~and~} 0< -\xi; \medskip \\
							\bC,& \text{if~} \rb{x,\xi } = \rb{0,0} 		
						\end{cases} \\
					& =\begin{cases}
					\displaystyle \left\{\frac{\rho }{\norm{\bx}}\bx \right\}, & \text{if~} \norm{x} \leq \alpha \xi \text{~and~} \xi >0; \medskip  \\
					\displaystyle \left\{ \frac{\rho }{\sqrt{1+\alpha^{2}}} \rb*{\frac{\alpha x}{\norm{x} },1 } \right\}, & \text{if~} \norm{x} > \max \{ \alpha\xi , -\xi/\alpha \} \text{~or~} \left[\, x\neq 0 \text{~and~} \norm{x} \leq -\xi /\alpha \,\right]; \medskip \\
					\displaystyle \sphere{0}{\beta} \times \{\beta/\alpha\}, & \text{if~} x =0 \text{~and~} \xi < 0; \medskip \\
					\bC,& \text{if~} \rb{x,\xi } = \rb{0,0},  					
					\end{cases}
				\end{align}
			\end{subequations}
		as announced in \cref{eq: pr-LC}.
	\end{proof}

\begin{example}\label{eg: Lewis-ver}
	Suppose that $\HH= \bbS^{N}$ is the Hilbert space of symmetric matrices of \cref{sect: SN}. Set $K \coloneqq \bbS_{+}^{N}$, let  $\rho \in \RR_{++}$, and set  $\sC \coloneqq K \cap \sphere{0}{\rho}$. In addition, let $A \in \HH$, and let $U \in \bbU^{N}$ be such that $A = U \rb{ \Diag \lambda\rb{A} } U^{\T}$; set 
		\begin{equation}
			\sD\coloneqq \menge{ V \rb{\Diag\rb{\rho,0,\ldots,0}} V^{\T} }{ V \in \bbU^{N} \text{~such that~} A = V \rb{\Diag \lambda\rb{A} } V^{\T} }
		\end{equation}
	and 
		\begin{equation}
			\sE \coloneqq \sphere{0}{\rho} \cap  \cone\rb{\conv \sD } .
		\end{equation}
	Then 
		\begin{equation}
			 P_{\sC}A 	
				= \begin{cases}
					\displaystyle \left\{ \frac{\rho}{ \norm{ \rb{\lambda\rb{A}}_{+}  } }  U \rb{ \Diag\rb{ \lambda\rb{A}}_{+} } U^{\T} \right\}, & \text{if~} \lambda_{1}\rb{A} >0; \medskip \\
					\sE, & \text{if~} \lambda_{1}\rb{A} =0; \medskip \\
					\sD, & \text{if~} \lambda_{1}\rb{A} < 0.
				\end{cases}
		\end{equation}
\end{example}

	\begin{proof}
		Set 
			\begin{equation}
			\sC_{\rho} \coloneqq \menge*{ B \in \bbS_{+}^{N} }{\rank{B}=1 \text{~and~} \fnorm{B}=\rho }.
			\end{equation}
		It then follows from \cref{lem: SN}\cref{it: pr-Crho} that  
			\begin{equation}\label{eq: max-Crho}
				\max \scal{A}{\sC_{\rho} } = \rho \lambda_{1}\rb{A} \quad \text{and} \quad P_{\sC_{\rho}}A = \sD.
			\end{equation}
		 Let us now  consider all conceivable cases: 
			
			\hspace{\parindent}(A) $\lambda_{1}\rb{A}>0$: Then $\max \scal{A}{\sC_{\rho} } >0$, and thus, by \cref{lem: SN}\cref{it: psd-gen} and \cref{lem: cone}\cref{it: c01}, we obtain $A \in \HH\smallsetminus\pc{K}$. Therefore, since $\{0\} \neq K$ is a nonempty closed convex cone, we infer from \cref{thm: K-S}\cref{it: KS2} and \cref{fact: pr-psd} that 
				\begin{equation}
					P_{\sC}A = \left\{ \frac{\rho}{\fnorm{ P_{K}A } } P_{K}A \right\} = \left\{ \frac{\rho}{ \norm{ \rb{\lambda\rb{A}}_{+}  } }  U \rb{ \Diag\rb{ \lambda\rb{A}}_{+} } U^{\T} \right\}.
 				\end{equation}
 				
 			\hspace{\parindent}(B) $\lambda_{1}\rb{A} \leq 0$: Then $\max \scal{A}{\sC_{\rho} } \leq 0$. Since $\rb{\forall B \in \sC_{\rho}}~\fnorm{B}=\rho$ and, by \cref{lem: SN}\cref{it: psd-gen}, $K = \pos{\sC_{\rho}}$, it follows from \cref{thm: pK}\cref{it: pK22}\&\cref{it: pK23} and \cref{eq: max-Crho} that  
 			\begin{subequations} 
 				\begin{align}
	 				P_{\sC}A 
	 					& = \begin{cases}
		 				P_{\sC_{\rho}}A, & \text{if~} \max \scal{A}{\sC_{\rho} } < 0; \\
		 				\sphere{0}{\rho} \cap \cone\rb{\conv P_{\sC_{\rho}}A }, & \text{if~}\max \scal{A}{\sC_{\rho} } = 0 
	 				\end{cases} \\
	 				& = \begin{cases}
		 				\sD, & \text{if~} \lambda_{1}\rb{A} < 0; \\
		 				\sE, & \text{if~} \lambda_{1}\rb{A} =0,
	 				\end{cases}
 				\end{align}
 			\end{subequations}
 		which completes the proof.
	\end{proof}

\begin{remark}
	Consider the setting of \cref{eg: Lewis-ver}. Since $U \rb{ \Diag\rb{\rho,0,\ldots,0} } U^{\T} \in \sD\subseteq \sE$, we see that 
		\begin{equation}
			\operatorname{s}\colon 
			\HH \to \HH: A \mapsto \begin{cases} 
				\displaystyle \frac{\rho}{ \norm{ \rb{\lambda\rb{A}}_{+}  } }  U \rb{ \Diag\rb{ \lambda\rb{A}}_{+} } U^{\T}, & \text{if~} \lambda_{1}\rb{A} >0 ; \medskip \\
				U \rb{ \Diag\rb{\rho,0,\ldots,0} } U^{\T}, &\text{otherwise}
			\end{cases}
		\end{equation} 
	is a selection of $P_{\sC}$.
\end{remark}

\section{Copositive matrices: a numerical experiment}

\label{s:copos}

In this final section, 
$N$ is a strictly positive integer and $M$ is a symmetric matrix
in $\RR^{N\times N}$.  Recall that $M$ is \emph{copositive} if 
	\begin{math}
		\rb{\forall x \in \RR^{N}_{+} } ~  \scal{x}{Mx} \geq 0 
	\end{math};
or, equivalently, 
	\begin{equation}\label{eq: copo}
		\mu\rb{M} \coloneqq  \min_{x \in \RR_{+}^{N}\cap
		\sphere{0}{1}} \tfrac{1}{2} \scal{x}{Mx} \geq 0.
	\end{equation} 
For further information on copositive matrices, we refer the reader to the surveys \cite{dur2010copositive, hiriart2010variational} and references therein. In view of \cref{eq: copo}, testing copositivity of $M$ amounts to 	
	\begin{equation}\label{eq: copo2}
	\minimize{x \in \RR_{+}^{N}\cap \sphere{0}{1} }{\tfrac{1}{2} \scal{x}{Mx} }.
	\end{equation}
Now, set $C \coloneqq \RR_{+}^{N} \cap \sphere{0}{1}$, set $f
\colon \RR^{N} \to \RR : x \mapsto \rb{1/2}\scal{x}{Mx}$, and set
$g \coloneqq \iota_{C}$ which is the indicator function of $C$.
Note that neither $f$ nor $g$ is convex;
 however, $\nabla f$ is Lipschitz continuous
with the operator norm $\|M\|$ (computed as the largest singular
value of $M$) being a suitable Lipschitz
constant.
The projection onto $C$ is computed using \cref{eq: eg-RN}. 
In turn, \cref{eq: copo2} can be written as  
	\begin{equation}\label{eq: copo3}
		\minimize{x \in \RR^{N} }{f\rb{x} + g\rb{x} }.
	\end{equation}
To solve this problem, we compared the
\emph{Fast Iterative Shrinkage-Thresholding Algorithm (FISTA)} 
(see \cite{beck2009fast}), 
the \emph{Projected Gradient Method (PGM)}  
(see \cite{attouch2013convergence, bolte2014proximal}), 
the algorithm presented in \cite[Example 5.5.2]{lange2016mm} by
Lange, 
the \emph{Douglas{\textendash}Rachford Algorithm (DRA)} variant 
presented in \cite{LiPongDR} by Li and Pong, 
and the regular DRA for solving \cref{eq: copo3} when $N \in \left\{2,3,4\right\}$. 
For each $N \in \left\{2,3,4\right\}$, using the  copositivity
criteria for matrices of order up to four (see, e.g.,
\cite{hadeler1983copositive, ping1993criteria}),  we randomly
generate 100 copositive matrices (group A) together with 100
non-copositive (group B) ones.
%with i.i.d. Gaussian entries and run the mentioned algorithms on those matrices. 
For each algorithm,   if $\fa{x_{n}}{n \in \NN}$ is the sequence
generated, then we terminate the algorithm when 
	 \begin{equation}
		 \frac{\norm{x_{n}- x_{n-1}} }{\max\{ \norm{x_{n-1}},1 \}} < 10^{-8}.
	 \end{equation}
The maximum allowable number of iterations is $1000$. 
For each matrix $M$ in group A (respectively, group $B$), we declare success if $\mu\rb{M} \geq 0$ (respectively, $\mu\rb{M} < 0$). We also record the average of the number of iterations until success of each algorithm. 
The results, obtained using \texttt{Matlab}, are reported in \cref{tab}.
 
 \begin{table}[H]
 	\centering
 	\begin{tabular}{*{12}c}
 		\toprule[1 pt]
 		\multirow{2}{*}{Size}        &
		\multirow{2}{*}{Copositive} &
		\multicolumn{2}{c}{FISTA} &
		\multicolumn{2}{c}{PGM} & \multicolumn{2}{c}{Lange} & \multicolumn{2}{c}{Li{\textendash}Pong} & \multicolumn{2}{c}{DR} \\ \cmidrule[1pt](lr){3-4}
 		\cmidrule[1pt](lr){5-6} \cmidrule[1pt](lr){7-8} \cmidrule[1pt](lr){9-10} \cmidrule[1pt](lr){11-12} \addlinespace[.5 ex] 
 														         & &succ    &  avg iter        &succ       & avg iter    & succ      & avg iter    & succ        & avg iter  & succ     & avg iter      \\ \midrule[1 pt]
 		\multirow{2}{*}{$2\times 2$} & Yes 					       &100     &  5               &100        & 5           & 100       & 89          & 100         & 94        & 96       & 23            \\ %\cmidrule[1 pt]{2-12} 
 									 & No                          &97      &  15              & 99        & 12          & 91        & 92          & 93          & 87        & 53       & 89            \\ \midrule[1 pt]
 		\multirow{2}{*}{$3\times 3$} & Yes 					       &100     &  27              &100        & 24          & 100       & 91          & 100         & 232       & 95       & 63            \\ %\cmidrule[1 pt]{2-12} 
 									 & No                          &96      &  30              & 98        & 24          & 86        & 93          & 95          & 162       & 31       & 214           \\ \midrule[1 pt] 
 		\multirow{2}{*}{$4\times 4$} & Yes 						   &100     &  60              &100        & 62          & 100       & 90          & 100         & 482       & 85       &126	        \\ %\cmidrule[1 pt]{2-12} 
 									 & No						   &100		& 51               &100        & 45          & 94        & 95          & 100         & 264       & 11       &114            \\ \bottomrule[1 pt]
 	\end{tabular} 
 \caption{Detecting whether a matrix is copositive using a
 variety of algorithms.}\label{tab}
 \end{table}
 
Finally, let us apply the algorithms to the well-known Horn
 matrix 
 	\begin{equation}
	 	H \coloneqq \begin{bmatrix*}[r]
		 	1 & -1 & 1 & 1 & -1 \\ 
		 	-1 & 1 & -1 & 1 &1 \\
		 	1 & -1 & 1 & -1 & 1 \\
		 	1 & 1 & -1 & 1 &-1 \\
		 	-1 & 1 & 1 & -1 & 1
	 	\end{bmatrix*},
 	\end{equation}
 which is copositive with $\mu(H)=0$ (see \cite[Equation~(3.5)]{hall1963copositive}). 
For each algorithm, we record the number of iterations and the
value of $f$ at the point that the algorithm is terminated. The
results are recorded in \cref{tabtwo}. 
	
	\begin{table}[H]
		\centering
		\begin{tabular}{*{10}c}
			\toprule[1 pt]
			\multicolumn{2}{c}{FISTA} &
			\multicolumn{2}{c}{PGM}        & \multicolumn{2}{c}{Lange}       & \multicolumn{2}{c}{Li{\textendash}Pong}      & \multicolumn{2}{c}{DR} \\ \cmidrule[1pt](lr){1-2}
			\cmidrule[1pt](lr){3-4} \cmidrule[1pt](lr){5-6} \cmidrule[1pt](lr){7-8} \cmidrule[1pt](lr){9-10} \addlinespace[.5 ex] 
			fval   &   iter           &  fval          &   iter        &fval   &   iter                &fval   &   iter              &fval   &   iter   \\ \midrule[1 pt] 
			$3.5230\mathrm{e}{-}17$    & 11              & $2.8297\mathrm{e}{-}20$&10       &$2.9979\mathrm{e}{-}07$&95       &$1.4912\mathrm{e}{-}14$& 170   & $ 0.0584$ & 13
		\end{tabular}
\caption{Detecting copositivity of the Horn matrix.}
		\label{tabtwo}
	\end{table} 

We acknowledge that these algorithms might get stuck at points
that are not solutions
and that the outcome might depend on the starting points;
moreover, a detailed complexity analysis is absent. 
There are thus various research opportunities to 
improve the current results. Nonetheless, our preliminary results indicate that FISTA and PGM
are potentially significant contenders for numerically testing copositivity.

\paragraph{Acknowledgments}{   
The authors thank the referees, 
Amir Beck, Minh Dao, Kenneth Lange, Marc
Teboulle, and Henry Wolkowicz for helpful comments and for referring us
to additional references.
HHB and XW were partially supported by NSERC Discovery Grants;
MNB was partially supported by a Mitacs Globalink Graduate Fellowship Award.
}

% REFERENCES------------------------------------------------------------------

\bibliographystyle{abbrv}

\bibliography{reference}

\end{document}